\documentclass[12pt]{amsart}

\usepackage{epsfig}
\usepackage{amsmath}
\usepackage{amssymb}
\usepackage{amscd}
\usepackage{graphicx}
\topmargin=0pt

\oddsidemargin=0pt

\evensidemargin=0pt

\textwidth=15cm

\textheight=22cm

\raggedbottom

\newtheorem{thm}{Theorem}[section]
\newtheorem{lem}[thm]{Lemma}
\newtheorem{ex}[thm]{Example}

\newtheorem{pro}[thm]{Proposition}

\newtheorem{de}[thm]{Definition}
\newtheorem{rem}[thm]{Remark}

\def \N {\mathbb N}

\def \Z {\mathbb Z}
\def \R {\mathbb R}

\def\A {\mathcal A}
\def \M {\mathcal M}
\def\W {\mathcal W}
\def\U {\mathcal U}
\def\V {\mathcal V}
\def\bu {{\bf U}}
\def\I {\mathcal I}
\def\J {\mathcal J}
\def\B {\mathcal B}
\parskip 1.0ex
\numberwithin{equation}{section}
\def\htop{h_{\rm top}}
\begin{document}
\baselineskip 13.5pt

\title[Variational principles of topological entropies]{Variational principles for  topological  entropies of subsets}
\date{}

\author{De-Jun Feng and Wen Huang}
\address{Department of Mathematics, The Chinese University of Hong
Kong Shatin, Hong Kong.}

\email{djfeng@math.cuhk.edu.hk}

\address{Department of Mathematics, University of Science and Technology of
China, Hefei, Anhui, 230026, P.R. China}
\email{wenh@mail.ustc.edu.cn}

\subjclass[2000]{Primary: 37B40, 37A35, 37B10, 37A05.}

\keywords{Topological entropies, Measure-theoretical entropies,
Variational principles}

\begin{abstract} Let $(X,T)$ be a topological dynamical system. We define the measure-theoretical lower and upper entropies  $\underline{h}_\mu(T)$,  $\overline{h}_\mu(T)$ for any $\mu\in M(X)$, where $M(X)$ denotes the collection of all Borel probability measures
 on $X$. For any non-empty compact subset $K$ of $X$, we
show that
$$\htop^B(T, K)= \sup \{\underline{h}_\mu(T): \mu\in M(X),\; \mu(K)=1\}, $$
$$\htop^P(T, K)= \sup \{\overline{h}_\mu(T): \mu\in M(X),\; \mu(K)=1\}. $$
where  $\htop^B(T, K)$ denotes Bowen's topological entropy of $K$, and $\htop^P(T, K)$  the packing topological entropy of $K$. Furthermore, when $\htop(T)<\infty$,
the first equality remains valid when $K$ is replaced by an arbitrarily analytic subset of $X$. The second equality always extends to any analytic subset of $X$.
\end{abstract}

\maketitle

\section{Introduction}
\label{S-1}
Throughout this  paper, by a {\it topological dynamical system}
(TDS) $(X, T)$ we mean a compact metric space $X$ together with a
continuous self-map $T: X\rightarrow X$. Let $M(X)$
, $M(X, T)$, and
$E(X, T)$ denote respectively the sets of all Borel probability
measures, $T$-invariant Borel probability measures, and
$T$-invariant ergodic Borel probability measures on $X$. By a {\it measure theoretical dynamical system}
(m.t.d.s.) we mean $(Y, \mathcal{C},\nu,T)$, where $Y$ is a set,
$\mathcal{C}$ is a $\sigma$-algebra over $Y$, $\nu$ is a probability
measure on $\mathcal{C}$ and $T$ is a measure preserving
transformation. A probability measure $\mu \in M(X,T)$ induces a
m.t.d.s. $(X,\mathcal{B}_X,\mu,T)$ or just $(X,\mu,T)$, where
$\mathcal{B}_X$ is the $\sigma$-algebra of Borel subsets of $X$.

In 1958 Kolmogorov \cite{K} associated to any m.t.d.s. $(Y,
\mathcal{C},\nu,T)$ an isomorphic invariant, namely the
measure-theoretical entropy $h_\nu(T)$. Later on in 1965, Adler, Konheim and McAndrew
\cite{AKM65} introduced for any TDS $(X,T)$ an analogous notion of topological
entropy $\htop(T)$, as an invariant of topological conjugacy. There is a basic relation between topological entropy and measure-theoretic entropy: if $(X,T)$ is a TDS, then $\htop(T)=\sup\{h_\mu(T):\; \mu\in M(X,T)\}$.  This variational principle was proved by  Goodman
\cite{Gm}, and  plays a fundamental role in ergodic theory and
dynamical systems (cf. \cite{Pes97, Wal82}).

 In 1973, Bowen \cite{Bow73} introduced   the  topological  entropy
$\htop^B(T, Z)$ for any set  $Z$ in a TDS  $(X,T)$ in a way  resembling Hausdorff dimension,  which we call {\it Bowen's topological entropy}  (see Sect.~\ref{S-2} for the definition). In particular, $\htop^B(T, X)=\htop(T)$.   Bowen's topological entropy plays a key role in topological dynamics and dimension theory \cite{Pes97}.


A question arises naturally whether there is certain
variational relation between Bowen's topological entropy and measure-theoretic entropy for arbitrary non-invariant compact set, or Borel set in general.  However, when $K\subseteq X$  is $T$-invariant but not compact, or $K$ is  compact but not $T$-invariant,  it may happen that $\htop^B(T,K)>0$ but
$\mu(K)=0$ for any $\mu\in M(X,T)$ (see Example \ref{ex-2}).
 Hence we don't expect to have
such variational principle on the class $M(X,T)$.  For our purpose,
we need to define the measure-theoretic entropy for  elements in
$M(X)$.

Fix  a compatible metric $d$ on $X$. For any $n \in \N$, the {\it $n$-th Bowen metric $d_n$ on $X$} is defined by
\begin{equation}
\label{e-2.1}
 d_n(x, y) =\max\left\{d\left(T^k(x), T^k(y)\right):\;k =0, \ldots,n-1 \right\}.
 \end{equation}
 For every $\epsilon>0$ we denote by $B_n(x, \epsilon)$, $\overline{B}_n(x, \epsilon)$ the open (resp. closed) ball
of radius $\epsilon$ in the metric $d_n$ around $x$, i.e.,
\begin{equation}
\label{e-bow}
B_n(x,
\epsilon) =\{y \in X : \ d_n(x, y) < \epsilon\},\quad \overline{B}_n(x,
\epsilon) =\{y \in X : \ d_n(x, y) \leq \epsilon\}.
\end{equation}
Following the idea of Brin and Katok \cite{BK}, we give the
following.
\begin{de}
{\rm Let $\mu\in M(X)$.  The {\it measure-theoretical lower and
upper entropies} of $\mu$ are defined respectively by
$$\underline{h}_\mu(T)=\int \underline{h}_\mu(T,x) \;d\mu(x),\quad
\overline{h}_\mu(T)=\int \overline{h}_\mu(T,x) \;d\mu(x),$$ where
\begin{equation*}
\begin{split}
&\underline{h}_\mu(T,x)=\lim\limits_{\epsilon\rightarrow
0}\liminf \limits_{n\rightarrow
+\infty}-\frac{1}{n}\log\mu(B_n(x,\epsilon)),\\
&\overline{h}_\mu (T,x)=\lim\limits_{\epsilon\rightarrow 0}\limsup
\limits_{n\rightarrow +\infty}-\frac{1}{n}\log\mu(B_n(x,\epsilon)).
\end{split}
\end{equation*}

}
\end{de}

Brin and Katok \cite{BK} proved that for any $\mu\in M(X,T)$,
$\underline{h}_\mu(T,x)=\overline{h}_\mu(T,x)$ for $\mu$-a.e
$x\in X$, and $\int \underline{h}_\mu(T,x)\; d\mu(x)=h_\mu(T)$.
Hence for  $\mu\in M(X,T)$,
$$\underline{h}_\mu(T)=\overline{h}_\mu(T)=h_\mu(T).$$

\medskip
To formulate our results, we need to introduce an additional notion.
A set in a metric space is said to be {\it analytic} if it is a continuous image of the set ${\mathcal N}$ of infinite sequences of
natural numbers (with its product topology).
It is known that in a Polish space,  the analytic subsets are closed under countable unions and intersections, and any Borel set is analytic (cf. Federer \cite[2.2.10]{Fed69}).

The main results of this paper are the following two theorems.

\begin{thm}
\label{thm-1.1} Let $(X,T)$ be a TDS.
\begin{itemize}
\item[(i)] If
$K\subseteq X$ is  non-empty and  compact, then
$$\htop^B(T, K)=\sup \{\underline{h}_\mu(T): \mu \in M(X), \; \mu(K)=1\}.$$
\item[(ii)] Assume that $\htop(T)<\infty$.  If $Z\subseteq X$ is analytic, then
\begin{equation}
\htop^B(T, Z)=\sup \{\htop^B(T, K):\; K\subseteq Z \mbox{ is compact }\}.
\end{equation}
\end{itemize}
\end{thm}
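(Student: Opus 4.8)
The plan is to prove part (i) by a mass-distribution (Frostman) argument and part (ii) by Choquet's capacitability theorem. For the inequality ``$\ge$'' in (i), which needs no compactness, I would first isolate a mass-distribution principle: if $\nu\in M(X)$, $B\subseteq X$ is Borel with $\nu(B)>0$, and $\underline{h}_\nu(T,x)\ge t$ for every $x\in B$, then $\htop^B(T,B)\ge t$. To see this, fix $t'<t$ and a small $\delta>0$; using $\underline{h}_\nu(T,x)=\sup_{\delta>0}\liminf_n(-\frac{1}{n}\log\nu(B_n(x,\delta)))$ one extracts a Borel set $B'\subseteq B$ with $\nu(B')>0$ and an $N_0$ such that $\nu(B_n(x,\delta))\le e^{-nt'}$ for all $x\in B'$ and $n\ge N_0$. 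Since $B_n(y,\delta/3)\subseteq B_n(x,\delta)$ whenever $d_n(x,y)<\delta/3$, any countable cover of $B'$ by Bowen balls $B_{n_i}(y_i,\delta/3)$ with $n_i\ge N_0$ satisfies $\sum_i e^{-t'n_i}\ge\sum_i\nu(B_{n_i}(y_i,\delta/3))\ge\nu(B')>0$, so $\htop^B(T,B',\delta/3)\ge t'$, and letting $t'\uparrow t$ gives $\htop^B(T,B)\ge t$. Now, for $\mu\in M(X)$ with $\mu(K)=1$, let $L$ be the $\mu$-essential supremum of $x\mapsto\underline{h}_\mu(T,x)$; applying the principle to $B_a:=\{x\in K:\underline{h}_\mu(T,x)>a\}$ for every $a<L$ yields $\htop^B(T,K)\ge L\ge\int\underline{h}_\mu(T,x)\,d\mu(x)=\underline{h}_\mu(T)$, and taking the supremum over $\mu$ gives ``$\ge$''.

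For the reverse inequality in (i), fix $s<\htop^B(T,K)$ and choose $\epsilon>0$ with $\htop^B(T,K,\epsilon)>s$; then $M(K,s,N,\epsilon)\to\infty$ as $N\to\infty$, so $M(K,s,N_0,\epsilon)\ge 1$ for some $N_0$. The compactness of $K$ is used here through a Frostman-type lemma producing $\mu\in M(X)$ with $\mu(K)=1$ and $\mu(\overline{B}_n(x,\epsilon/3))\le e^{-sn}$ for all $x\in X$ and all $n\ge N_0$; such a $\mu$ is built as a weak-$*$ limit of probability measures obtained by spreading unit mass over finite Bowen-ball covers of $K$, the quasi-nesting $B_m(y,\epsilon/3)\subseteq B_m(x,\epsilon)$ for $d_m(x,y)<\epsilon/3$ being what lets the ball estimates pass to the limit. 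For this $\mu$, $-\frac{1}{n}\log\mu(\overline{B}_n(x,\epsilon/3))\ge s$ for $n\ge N_0$, so $\liminf_n(-\frac{1}{n}\log\mu(B_n(x,\epsilon')))\ge s$ for every $\epsilon'\le\epsilon/3$ (the quantity being monotone in the radius), hence $\underline{h}_\mu(T,x)\ge s$ for every $x$ and $\underline{h}_\mu(T)=\int_K\underline{h}_\mu(T,x)\,d\mu\ge s$. Letting $s\uparrow\htop^B(T,K)$ finishes (i); note that a single scale $\epsilon$ suffices, so no compatibility between different scales is required.

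For part (ii), ``$\ge$'' is monotonicity of $\htop^B$. For ``$\le$'', fix $s<\htop^B(T,Z)$ and $\epsilon>0$ with $\htop^B(T,Z,\epsilon)>s$, so $M(Z,s,\epsilon)=\infty$; I would apply Choquet's capacitability theorem to the set functions $\phi_N:=M(\,\cdot\,,s,N,\epsilon)$ on the Polish space $X$, one $N$ at a time. Each $\phi_N$ is monotone and is continuous along decreasing sequences of compacta: if $K_j\downarrow K$ with the $K_j$ compact and $\phi_N(K)<a$, then a finite subcover of $K$ by open Bowen $\epsilon$-balls of orders $\ge N$ of total weight $<a$ has open union containing $K$, hence containing some $K_{j_0}$, so $\phi_N(K_{j_0})<a$, contradicting $\phi_N(K_j)\downarrow\ge a$. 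The remaining axiom -- continuity of $\phi_N$ along increasing sequences of arbitrary sets -- is precisely where the hypothesis $\htop(T)<\infty$ enters: it supplies the finiteness and Borel-regularity of the outer measure $\phi_N$ needed for the Borel-hull argument giving continuity from below on non-measurable increasing sequences. Granting that each $\phi_N$ is a Choquet capacity, analyticity of $Z$ gives $M(Z,s,N,\epsilon)=\sup\{M(Q,s,N,\epsilon):Q\subseteq Z\text{ compact}\}$; taking $\sup_N$ and interchanging suprema yields $M(Z,s,\epsilon)=\sup\{M(Q,s,\epsilon):Q\subseteq Z\text{ compact}\}=\infty$, so there is a compact $Q\subseteq Z$ with $M(Q,s,\epsilon)>0$. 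By the standard dichotomy ($M(Q,\,\cdot\,,\epsilon)$ equals $\infty$ below its critical value and $0$ above it, for fixed $\epsilon$) this forces $\htop^B(T,Q,\epsilon)\ge s$, hence $\htop^B(T,Q)\ge s$; letting $s\uparrow\htop^B(T,Z)$ completes (ii). Combined with (i), this also yields the analytic form of the first equality announced in the abstract.

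I expect the genuine work to be concentrated in two places. The first is the Frostman-type construction in the converse half of (i): manufacturing, from the mere bound $M(K,s,N_0,\epsilon)\ge 1$, an actual Borel probability measure on the compact set $K$ whose Bowen balls carry exponentially small mass, which takes care because Bowen balls of different orders are only quasi-nested rather than genuinely nested. The second is the verification that $M(\,\cdot\,,s,N,\epsilon)$ is a Choquet capacity, in particular the continuity-from-below axiom, which is exactly the point at which the finiteness of the topological entropy is indispensable -- and without which part (ii) is known to fail.
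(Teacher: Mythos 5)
Your lower-bound argument in (i) is fine and even a bit slicker than the paper's (bounding $\htop^B(T,K)$ by the $\mu$-essential supremum of $\underline h_\mu(T,\cdot)$ rather than directly by the integral), but the other two halves of your plan have real gaps, both at exactly the points you yourself flag as ``the genuine work.''

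For the converse half of (i), the step you sketch -- build the Frostman measure as a weak-$*$ limit of probability measures obtained by ``spreading unit mass over finite Bowen-ball covers of $K$'' -- does not go through as stated. That construction relies on a nesting structure (as for dyadic cubes or cylinder sets) so that mass redistributed at finer scales stays coherent with coarser scales; Bowen balls of different orders only quasi-nest, and there is no natural hierarchical refinement to redistribute mass along. The paper circumvents this by first introducing the \emph{weighted} Bowen pre-measure $\W^s_{N,\epsilon}$ (allowing covers $\sum_i c_i\chi_{B_i}\ge\chi_K$ with real weights $c_i$), proving $\W^s$ and $\M^s$ have the same critical exponent (Proposition~\ref{pro-2.1}, using Federer's integer-approximation trick and the $5r$-covering lemma), and then running Howroyd's Hahn--Banach/Riesz-representation argument on the sublinear functional $f\mapsto\W^s_{N,\epsilon}(\chi_K f)$ to manufacture the measure (Lemma~\ref{pro-3.1}). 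The weights are what make the functional subadditive and homogeneous, and the Hahn--Banach step is what replaces your weak-$*$ limit; without the weighted version, the linear-functional approach does not give subadditivity, and without nesting the weak-$*$ approach does not give the ball estimates.

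For (ii), the single unproved axiom you need to invoke Choquet is exactly the one you wave through: continuity of $\phi_N=\M^s_{N,\epsilon}(\cdot)$ along arbitrary increasing sequences. This does \emph{not} follow from Borel regularity plus finiteness of $\phi_N(X)$; the Borel-hull argument gives continuity from below only along sequences of $\phi_N$-measurable sets, and $\phi_N$ is not a metric outer measure for a fixed $\epsilon$, so Borel sets need not be measurable. The classical proofs of the increasing-sets lemma for Hausdorff-type pre-measures (Carleson, Falconer, and the paper's own Proposition~\ref{pro-5.2}(i)) all hinge on a \emph{net property} of the covering family: any two covering sets are either disjoint or nested. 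Bowen balls fail this. The paper therefore does \emph{not} apply a capacitability argument directly to $\M^s_{N,\epsilon}$; it first passes to the quantities $\M^s_N(\U,\cdot)$ built from a closed-open partition $\U$ (whose words $\W_n(\U)$ \emph{do} form a net), proves the increasing-sets lemma there, runs the Sion--Sjogren/Choquet-style recursive selection $Z_{n_1,\dots,n_p}$ to get the compact approximation in the zero-dimensional case (Theorem~\ref{thm-6.1}), and only then transports the result to a general $(X,T)$ via a zero-dimensional principal extension $\pi:(H,\Gamma)\to(X,T)$ with $\sup_x \htop^{UC}(\Gamma,\pi^{-1}(x))=0$ together with Bowen's inequality $\htop^B(S,\pi(E))\le\htop^B(T,E)\le\htop^B(S,\pi(E))+\sup_y\htop^{UC}(T,\pi^{-1}(y))$. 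Relatedly, you have misattributed the role of the hypothesis $\htop(T)<\infty$: it is not there to make $\phi_N$ Borel-regular or finite (both hold anyway at a single scale $\epsilon$ on a compact space); it is needed to invoke Boyle--Downarowicz (Proposition~\ref{BD1}) so that a zero-dimensional principal extension exists. Indeed the paper's Remarks~\ref{rem-1}--\ref{rem-lind} show the hypothesis can be relaxed to $mdim(X,T)=0$, which only makes sense under this interpretation.
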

\medskip

\begin{thm}
\label{thm-4.1}
Let $(X,T)$ be a TDS.
\begin{itemize}
\item[(i)] If
$K\subseteq X$ is  non-empty and  compact, then
$$\htop^P(T, K)=\sup \{\overline{h}_\mu(T): \mu \in M(X), \; \mu(K)=1\},$$
where $\htop^P(T, K)$  denotes the packing topological entropy  of $K$ (see Sect.~\ref{S-2} for the definition).

\item[(ii)]   If $Z\subseteq X$ is analytic, then
\begin{equation}
\htop^P(T, Z)=\sup \{\htop^P(T, K):\; K\subseteq Z \mbox{ is compact }\}.
\end{equation}
\end{itemize}
\end{thm}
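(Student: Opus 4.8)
\noindent\textit{Proof strategy.} The plan is to derive the theorem from an easy ``lower'' estimate, a hard ``upper'' construction, and --- for part~(ii) --- the latter together with monotonicity. I will use the scale-$\epsilon$ packing quantities from Section~\ref{S-2}: $P(Z,s,N,\epsilon)$ is the supremum of $\sum_ie^{-sn_i}$ over finite or countable pairwise disjoint families $\{\overline B_{n_i}(x_i,\epsilon)\}$ with $x_i\in Z$ and $n_i\ge N$; $P(Z,s,\epsilon)=\lim_{N\to\infty}P(Z,s,N,\epsilon)$; $\mathcal P(Z,s,\epsilon)=\inf\{\sum_iP(Z_i,s,\epsilon):Z\subseteq\bigcup_iZ_i\}$; and $\htop^P(T,Z,\epsilon)=\inf\{s\ge 0:\mathcal P(Z,s,\epsilon)=0\}$, $\htop^P(T,Z)=\sup_{\epsilon>0}\htop^P(T,Z,\epsilon)$. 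Two soft facts will be used constantly: \textit{(a)} for fixed $Z$ and $\epsilon$, $s\mapsto\mathcal P(Z,s,\epsilon)$ equals $+\infty$ for $s<\htop^P(T,Z,\epsilon)$ and $0$ for $s>\htop^P(T,Z,\epsilon)$, so $\htop^P(T,Z)>s$ forces $\mathcal P(Z,s_1,\epsilon_0)=+\infty$ for some $\epsilon_0>0$ and some $s_1\in(s,\htop^P(T,Z))$; \textit{(b)} at a fixed scale $\epsilon$ the box entropy $\limsup_n\frac1n\log s_n(X,\epsilon)$ --- where $s_n(X,\epsilon)$ is the largest cardinality of an $(n,\epsilon)$-separated subset of $X$ --- is finite, and $P(Z,s,N,\epsilon)\le\sum_{n\ge N}s_n(X,\epsilon)e^{-sn}$, so $\mathcal P(\cdot,s,\epsilon)\equiv0$ once $s$ exceeds it. Fact~\textit{(b)} is precisely why, in contrast with Theorem~\ref{thm-1.1}(ii), part~(ii) below needs no assumption $\htop(T)<\infty$.

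\medskip
\noindent\textit{Step 1: the lower estimate $\htop^P(T,K)\ge\overline h_\mu(T)$ when $\mu(K)=1$.} Since $\overline h_\mu(T)=\int\overline h_\mu(T,x)\,d\mu(x)$ is at most the $\mu$-essential supremum of $x\mapsto\overline h_\mu(T,x)$, I would show $\htop^P(T,K)\ge a$ for every $a$ below that essential supremum. Each $x$ with $\overline h_\mu(T,x)>a$ admits an $\epsilon(x)>0$ with $\limsup_n-\frac1n\log\mu(B_n(x,\eta))>a$ for all $\eta\le\epsilon(x)$; decomposing $\{\overline h_\mu(T,\cdot)>a\}$ by a lower bound $1/m$ for $\epsilon(x)$ gives $E\subseteq K$ with $\mu(E)>0$ and a scale $\epsilon_0$ for which this holds at every $x\in E$ and $\eta\le\epsilon_0$. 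Fixing $s<a$ and $\rho$ with $6\rho\le\epsilon_0$, and noting that in $\mathcal P(E,s,\rho)$ the covering sets may be taken to be disjoint subsets of $E$, it suffices to prove $P(Z',s,\rho)\ge\mu(Z')$ for an arbitrary $Z'\subseteq E$. For each $N$ the family $\mathcal F_N=\{\overline B_n(x,\rho):x\in Z',\ n\ge N,\ \mu(\overline B_n(x,5\rho))<e^{-sn}\}$ covers $Z'$ and, by the choice of $E$, contains balls of arbitrarily large index around every point of $Z'$; a $3r$-type covering lemma for Bowen balls extracts a countable pairwise disjoint subfamily $\{\overline B_{n_i}(x_i,\rho)\}$ with $Z'\subseteq\bigcup_i\overline B_{n_i}(x_i,3\rho)$, whence $\mu(Z')\le\sum_i\mu(\overline B_{n_i}(x_i,5\rho))<\sum_ie^{-sn_i}\le P(Z',s,N,\rho)$, and $N\to\infty$ gives the claim. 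Thus $\mathcal P(E,s,\rho)\ge\mu(E)>0$, so $\htop^P(T,K)\ge\htop^P(T,E)\ge\htop^P(T,E,\rho)\ge a$.

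\medskip
\noindent\textit{Step 2: the key construction --- the hard part.} Both remaining assertions reduce to a dynamical version of the Joyce--Preiss theorem on subsets of positive packing measure: \emph{if $Z\subseteq X$ is analytic and $\mathcal P(Z,s_1,\epsilon_0)=+\infty$, then for every $\delta>0$ there exist a compact $K\subseteq Z$ and $\mu\in M(X)$ with $\mu(K)=1$ and $\limsup_n-\frac1n\log\mu(B_n(x,\epsilon_0))\ge s_1-\delta$ for all $x\in K$} (whence $\overline h_\mu(T,x)\ge s_1-\delta$ on $K$, so $\overline h_\mu(T)\ge s_1-\delta$). I would prove this by a Moran construction along the Baire-space tree. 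Writing $Z=f(\mathcal N)$ with $f$ continuous and $Z_u=f([u])$ for a finite word $u$ ($[u]$ the cylinder), one has $Z_u=\bigcup_jZ_{uj}$ and $\mathrm{diam}\,Z_u\to0$ along every infinite branch. One constructs recursively finite sets of words $W_1\subseteq W_2\subseteq\cdots$ (each extending the previous) and coherent masses $m(u)$ --- with $m(\emptyset)=1$ and $m(u)$ the total mass of the words in $W_{k+1}$ extending $u$ --- arranging scales $p_k\uparrow\infty$ so that the sets $\overline{Z_u}$ ($u\in W_k$) are mutually separated at a large common scale and $m(u)\le e^{-(s_1-\delta)p_k}$. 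Then $K=\bigcap_k\bigcup_{u\in W_k}\overline{Z_u}$ is a nonempty compact subset of $Z$ (each point of $K$ picks out an infinite branch, whose cylinders shrink to a point of $Z$); the weak-$*$ limit $\mu$ of the atomic measures carrying weight $m(u)$ on the pieces $\overline{Z_u}$ satisfies $\mu(\overline{Z_u})=m(u)$ and $\mu(K)=1$; and one arranges the separation at level $k$ so that $B_{p_k}(x,\epsilon_0)\cap K\subseteq\overline{Z_u}$ when $x\in\overline{Z_u}$, $u\in W_k$, whence $\mu(B_{p_k}(x,\epsilon_0))\le m(u)\le e^{-(s_1-\delta)p_k}$; letting $k\to\infty$ gives the local entropy bound. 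The hard part --- the genuine obstacle --- is carrying out this recursion: at each step one must select, inside an analytic piece $Z_u$ with $\mathcal P(Z_u,s_1-\frac\delta2,\epsilon_0)=+\infty$, \emph{finitely many} sub-pieces that are well separated at a large common scale and still carry infinite scale-$\epsilon_0$ packing pre-measure. Because $\mathcal P(\cdot,s_1-\frac\delta2,\epsilon_0)$ is only countably subadditive, infinitude of the pre-measure could a priori dissipate among infinitely many children; I would circumvent this, as Joyce and Preiss do, by first localizing $Z_u$ to a sub-piece on which the packing pre-measure is infinite on every nonempty relatively open subset, and only then refining --- a localization available here precisely because, by fact~\textit{(b)}, the separated-set counts stay finite at every level.

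\medskip
\noindent\textit{Step 3: conclusion.} Granting Step~2 the rest is routine. For (i): if $K$ is compact with $\htop^P(T,K)>s$, fact~\textit{(a)} gives $\mathcal P(K,s_1,\epsilon_0)=+\infty$ for some $\epsilon_0>0$ and $s_1>s$, and Step~2 with $Z=K$ produces $\mu$ with $\mu(K)=1$ and $\overline h_\mu(T)\ge s_1-\delta>s$ for $\delta$ small; hence $\sup\{\overline h_\mu(T):\mu(K)=1\}\ge\htop^P(T,K)$, and with Step~1 this is the equality in (i). For (ii): ``$\ge$'' is monotonicity of $\htop^P(T,\cdot)$; and if $Z$ is analytic with $\htop^P(T,Z)>s$, fact~\textit{(a)} gives $\mathcal P(Z,s_1,\epsilon_0)=+\infty$ with $s_1>s$, and Step~2 now yields a \emph{compact} $K\subseteq Z$ carrying $\mu$ with $\limsup_n-\frac1n\log\mu(B_n(x,\epsilon_0))\ge s_1-\delta$ on $K$; running Step~1's argument for this $\mu$ gives $\htop^P(T,K)\ge\htop^P(T,K,\epsilon_0)\ge s_1-\delta>s$, so $\sup\{\htop^P(T,K):K\subseteq Z\ \text{compact}\}\ge\htop^P(T,Z)$, which is ``$\le$''.
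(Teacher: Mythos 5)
Your proposal matches the paper's own two-part strategy essentially exactly: the same direct lower estimate $\htop^P(T,K)\ge\overline h_\mu(T)$ obtained by covering a positive-measure set $E$ on which $\overline h_\mu(T,x,\epsilon)$ is uniformly bounded below and then applying the $5r$-covering lemma (the paper works at scale $\epsilon/5$), followed by the same Joyce--Preiss style Moran construction along the Baire-space tree $Z_{n_1,\ldots,n_p}$ of the analytic set, including the open-set localization (the paper's $H=\bigcup\{G:\ \mathcal{P}^t_\epsilon(Z\cap G)=0\}$, $Z'=Z\setminus H$) that prevents the packing pre-measure from dissipating among infinitely many children, and the weak-$*$ limit of the atomic measures as the final $\mu$. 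One small imprecision worth flagging: the localization to a sub-piece whose pre-measure is positive on every relatively open neighborhood rests only on separability of $X$ and countable subadditivity of $\mathcal{P}^s_\epsilon$ --- it does not invoke your fact (b) about fixed-scale separated-set counts; the honest reason Theorem~\ref{thm-4.1}(ii) needs no hypothesis $\htop(T)<\infty$ is simply that the packing construction operates directly on $X$ at a single scale $\epsilon$, with no need for the zero-dimensional principal-extension machinery that drives Theorem~\ref{thm-1.1}(ii).
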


\medskip

The above two theorems establish the variational principles for Bowen and packing topological entropies of arbitrary Borel sets in a dual manner.
They provide as a kind of extension of the classical variational principle for topological entropy of  compact invariant sets.   In the reminder of this section,
we give two examples which motivated this paper.
\begin{ex}
\label{ex-1}
{\rm
Let $(X,T)$ denote the one-sided full shift over a finite  alphabet $\{1, 2,\ldots,\ell\}$, where $\ell$ is an integer $\geq 2$. Endow  $X$ with the metric
 $d(x, y) = e^{-n}$ for $x = (x_j)_{j=1}^\infty$ and
$y =(y_j)_{j=1}^\infty$, where $n$ is the largest integer such that
$x_j = y_j$ ($1\leq j \leq n$). It is easy to check by definition  that for any $E\subseteq X$,
$$
\htop^B(T, E)=\dim_HE,\quad \htop^P(T,E)=\dim_PE,
$$
where $\dim_HE,\dim_PE$ denote respectively the Hausdorff dimension and the packing dimension of $E$ in the ultra-metric space $(X,d)$  (cf. \cite{Mat95}).
It is a well known fact in geometric measure theory (cf. \cite{Mat95}) that, for any analytic set $Z\subseteq X$ with $\dim_HZ>0$, and any $0\leq s<\dim_HZ$, $0\leq t<\dim_PZ$, there exist compact sets $K_1, K_2\subset Z$
such that
$$
0<{\mathcal H}^s(K_1)<\infty, \quad 0<{\mathcal P}^t(K_2)<\infty,
$$
where ${\mathcal H}^s$, ${\mathcal P}^s$ denote respectively the $s$-dimensional Hausdorff measure and packing measure, and hence $\dim_HK_1=s$, $\dim_PK_2=t$. Furthermore, for ${\mathcal H}^s$-a.e $x\in K_1$, and ${\mathcal P}^t$-a.e $y\in K_2$,
$$
\liminf_{r\to 0} \frac{\log {\mathcal H}^s (K_1\cap B_r(x))}{\log r}=s,\;\quad \limsup_{r\to 0}\frac{\log {\mathcal P}^t(K_2\cap B_r(x))}{\log r}=t,
$$
where $B_r(x)$ denotes the open ball centered at $x$ of radius $r$.  This  can derive Theorems \ref{thm-1.1}-\ref{thm-4.1} in the full shift case with some additional density arguments as in  \cite[p.99, Exercises 6-7]{Mat95}.

}

\end{ex}

\begin{ex}
\label{ex-2}
{\rm
Again let $(X,T)$ denote the one-sided full shift over a finite  alphabet $\{1, 2,\ldots,\ell\}$.
Define $\varphi: X\to \R$  as
$$\varphi(x)=\left\{
\begin{array}{ll}
1& \mbox{ if }x_1=1\\
0 & \mbox{ otherwise}
\end{array}
\right.
$$
for $x=(x_i)_{i=1}^\infty\in X$. Let $E$ denote the  set of ``non-typical points'' associated with  the Birkhoff average of  $\varphi$, i.e.,
$$E=\left\{x\in X:\; \liminf_{n\to \infty}\frac{1}{n}\sum_{i=0}^{n-1}\varphi(T^ix)\neq \limsup_{n\to \infty}\frac{1}{n}\sum_{i=0}^{n-1}\varphi(T^ix)\right\}.
$$
It is easy to see that $E$ is  $T$-invariant and  Borel.
By the Birkhoff ergodic theorem, $\mu(E)=0$ for any $\mu\in M(X, T)$.  However $\htop^B(T,E)=\htop(T)=\log \ell$ (cf. \cite{BaSc00}).
 Furthermore, as we mention in Example \ref{ex-1} that for any $0\leq s<\log \ell$, there exists a compact set $K\subset E$ such that $\htop^B(T,K)=\dim_HK=s$.}
\end{ex}

In our proofs of Theorems \ref{thm-1.1}-\ref{thm-4.1}, we  use and extend some ideas and techniques in  geometric measure theory and topological dynamical systems. We remark that the assumption $\htop^B(T)<\infty$ in Theorem \ref{thm-1.1}(ii) can be weaken somewhat (see Remark \ref{rem-lind}). However it remains open
whether this assumption can be removed.

The paper is organized as follows. In Sect.~\ref{S-2} we  give the definitions and some basic properties of several topological entropies of subsets in a TDS: upper capacity topological entropy, Bowen's topological entropy, the packing topological entropy.
In Sect.~\ref{S-3}, we prove Theorem \ref{thm-1.1}. In Sect.~\ref{S-4},  we prove Theorem \ref{thm-4.1}.

\section{Topological entropies of subsets}\label{S-2}

In this section, we  give  the definitions and some basic properties of several topological entropies of subsets in a TDS: upper capacity topological entropy, Bowen's topological entropy and packing topological entropy.

Let $(X, d)$ be a compact metric space and $T : X \to X$  a
continuous transformation. Let $d_n$ and $B_n(x,\epsilon)$ be defined as in \eqref{e-2.1}-\eqref{e-bow}.
\subsection{Upper capacity topological entropy}

Let $Z\subseteq X$ be a non-empty set. For
$\epsilon>0$, a set $E\subset Z$ is called a
{\it $(n,\epsilon)$-separated set} of $Z$ if $x,y\in E, x\neq y$
implies $d_n(x,y)>\epsilon$; $E\subseteq X$ is called {\it
$(n,\epsilon)$-spanning set} of $Z$, if for any $x\in Z$, there
exists $y\in E$ with $d_n(x,y)\le \epsilon$.   Let $r_n(Z,\epsilon)$ denote the
largest cardinality of  $(n,\epsilon)$-separated sets for $Z$, and $\tilde{r}_n(Z,\epsilon)$ the smallest cardinality of $(n,\epsilon)$-separated sets of $Z$. The {\it upper capacity topological entropy of $T$ restricted on $Z$}, or simply, the
{\it upper capacity topological entropy of $Z$} is defined as
$$\htop^{UC}(T, Z)=\lim_{\epsilon \to 0} \limsup\limits_{n\rightarrow \infty}
\frac{1}{n} \log r_n (Z, \epsilon)=\lim_{\epsilon \to 0} \limsup\limits_{n\rightarrow \infty}
\frac{1}{n} \log \tilde{r}_n (Z, \epsilon).
$$
We remark that the second equality holds for each $Z\subseteq X$ (cf. \cite[P. 169]{Wal82}).
The quantity $\htop^{UC}(T, Z)$ is the straightforward generalization of the Adler-Konheim-McAndrew definition \cite{AKM65} of the topological entropy to arbitrary subsets.

\subsection{Bowen's topological entropy}
\label{S-2.2}
Suppose that $\U$ is a finite open cover of
$X$. Denote $\mbox{diam}(\U):=\max\{\mbox{diam}(U): U\in \U\}$. For $n\geq 1$ we denote by $\W_n(\U)$ the collection of
strings $\bu=U_1\ldots U_n$ with $U_i\in \U$. For $\bu\in
\W_n(\U)$ we call the integer $m(\bu)=n$ the {\it length of $\bu$} and
define
\begin{eqnarray*}
X(\bu) & =&U_{1}\cap T^{-1}U_2\cap \ldots \cap
T^{-(n-1)}U_n\\
 &=&\left\{x\in X:\; T^{j-1}x\in U_j \mbox{
for } j=1,\ldots,n\right\}.
\end{eqnarray*}
Let $Z\subseteq X$. We say that $\Lambda\subset\bigcup_{n\geq 1}\W_n(\U)$ {\it covers $Z$} if
$\bigcup_{\bu\in \Lambda}X(\bu)\supset Z$.  For $s\in \R$, define
$$
\M^s_N(\U, Z)=\inf_{\Lambda} \sum_{\bu\in \Lambda} e^{-s
m(\bu)},
$$
where the infimum is taken over all $\Lambda \subset\bigcup_{j\geq
N}\W_j(\U)$ that cover $Z$.
Clearly $\M^s_N(\U, \cdot)$ is a finite outer measure on $X$, and
\begin{equation}
\label{e-open}
\M^s_N(\U, Z)=\inf\{ \M^s_N(\U, G):\; G\supset Z,\; G \mbox{ is open}\}.
\end{equation}
  Note that $\M^s_N(\U, Z)$ increases as $N$ increases.
Define $\M^s(\U, Z)=\lim_{N\to \infty}\M^s_N(\U, Z)$ and
$$
\htop^B(T,\U, Z)=\inf\{s:\; \M^s(\U, Z)=0\}=\sup\{s:\; \M^s(
\U, Z)=+\infty\}. $$
Set
\begin{equation}
\label{e-e1}\htop^B(T, Z)=\sup_{\U}\htop^B(T,\U, Z),
\end{equation}
where $\U$ runs over finite open covers of $Z$. We  call $\htop^B(T, Z)$ the {\it Bowen's topological entropy of $T$ restricted to $Z$}
or, simply, the {\it topological entropy of $Z$}.  This quantity was first introduced by Bowen in \cite{Bow73}. It is known (see, i.e.  \cite[Theorem 11.1]{Pes97}) that
 \begin{equation}
 \label{e-bowen}
 \sup_{\U}\htop^B(T,\U, Z)
=\lim_{{\rm diam}(\U)\to 0}\htop^B(T,\U, Z).
\end{equation}

Bowen's topological entropy of subsets can  be defined in an alternative way. For $Z \subseteq X$,   $s\geq 0$, $N\in \N$  and  $\epsilon> 0$, define
\begin{equation*}\label{e-top}
\M^s_{N, \epsilon}(Z) =\inf\sum_i
\exp(-sn_i),
\end{equation*}
 where the infimum is taken over all finite or countable families $\{B_{n_i}(x_i,\epsilon)\}$ such that $x_i\in X$, $n_i\geq N$ and $\bigcup_iB_{n_i}(x_i,\epsilon)\supseteq Z$. The quantity $\M^s_{N, \epsilon}(Z)$ does not decrease as $N$ increases and $\epsilon$ decreases, hence the following
limits exist:
$$\M^s_\epsilon(Z) = \lim_{N\to \infty} \M^s_{N,\epsilon}(Z),\quad \M^s(Z)=\lim_{\epsilon\to 0}\M^s_\epsilon(Z).$$
Bowen's topological entropy $\htop^B(T,Z)$ can be equivalently defined as    a critical value
of the parameter s,
where $\M^s(Z)$ jumps from $\infty$ to $0$, i.e.
\[
\M^s(Z) = \left\{
\begin{array}{ll}
0, & s > \htop^B(T,Z),\\
\\

\infty,& s < \htop^B(T,Z).
\end{array}
\right.
\]
 For details, see \cite[Page 74]{Pes97}.

\subsection{Packing topological entropy}
Let $Z\subseteq X$. For $s\geq 0$, $N\in \N$ and $\epsilon>0$, define
$$
P^s_{N,\epsilon}(Z)=\sup\sum_i
\exp(-sn_i),
$$
 where the supermum is taken over all finite or countable pairwise disjoint families $\{\overline{B}_{n_i}(x_i,\epsilon)\}$ such that $x_i\in Z$, $n_i\geq N$ for all $i$, where
 $$\overline{B}_n(x,\epsilon):=\{y\in X:\; d_n(x,y)\leq \epsilon\}.
 $$
 The quantity $P^s_{N, \epsilon}(Z)$ does not decrease as $N,\epsilon$ decrease, hence the following
limits exist:
$$P^s_\epsilon(Z) = \lim_{N\to \infty} P^s_{N,\epsilon}(Z).$$
Define
$$
{\mathcal P}^s_\epsilon(Z)=\inf\left\{\sum_{i=1}^\infty P^s_\epsilon(Z_i):\; \bigcup_{i=1}^\infty Z_i\supseteq Z\right\}.
$$
Clearly, ${\mathcal P}^s_\epsilon$ satisfies the following property:   if $Z\subseteq \bigcup_{i=1}^\infty Z_i$, then ${\mathcal P}^s_\epsilon(Z)\leq \sum_{i=1}^\infty {\mathcal P}^s_\epsilon(Z_i)$.
There exists a critical value
of the parameter s, which we will denote by $\htop^P(T,Z,\epsilon)$,
where ${\mathcal P}^s_\epsilon(Z)$ jumps from $\infty$ to $0$, i.e.
\[
{\mathcal P}^s_\epsilon(Z) = \left\{
\begin{array}{ll}
0, & s > \htop^P(T,Z,\epsilon),\\
\\
\infty,& s < \htop^P(T,Z,\epsilon).
\end{array}
\right.
\]
Note that  $\htop^P(T,Z,\epsilon)$ increases when $\epsilon$ decreases.
 We  call
$$\htop^P(T, Z):=\lim_{\epsilon\to 0}\htop^P(T,Z,\epsilon)$$ the {\it packing topological entropy of $T$
restricted to $Z$} or, simply, the {\it packing topological entropy
of $Z$}, when there is no confusion about $T$. This quantity is defined in way which resembles the packing dimension. We remark that an equivalent definition
of packing topological entropy was given earlier in \cite{HYZ1}.

\subsection{Some basic  properties}
\begin{pro}
\begin{itemize}
\item[(i)]
For $Z\subseteq Z'$,
$$\htop^{UC}(T, Z)\leq \htop^{UC}(T, Z'),\; \htop^{B}(T, Z)\leq \htop^{B}(T, Z'),\; \htop^{P}(T, Z)\leq \htop^{P}(T, Z').$$

\item[(ii)]
For $Z\subseteq \bigcup_{i=1}^\infty Z_i$, $s\geq 0$ and $\epsilon>0$, we have

$$
\M^s_\epsilon(Z)\leq \sum_{i=1}^\infty \M^s_\epsilon(Z_i),\; \htop^{B}(T, Z)\leq \sup_{i\geq 1}\htop^{B}(T, Z_i),\;
 \htop^{P}(T, Z)\leq \sup_{i\geq 1}\htop^{P}(T, Z_i).
$$

\item[(iii)] For any $Z\subseteq X$,
$
\htop^B(T, Z)\leq \htop^P(T,Z)\leq \htop^{UC}(T,Z).
$

\item[(iv)] Furthermore, if $Z$ is $T$-invariant and compact, then
 $$
\htop^B(T, Z)= \htop^P(T,Z)= \htop^{UC}(T,Z).
$$

\end{itemize}
\end{pro}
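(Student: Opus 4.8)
The plan is to dispatch the four items in order, the first three being essentially formal and the last carrying the real content.

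For (i), every covering configuration entering the definition of $\M^s_\epsilon$, $\M^s_N(\U,\cdot)$ or $\M^s_{N,\epsilon}$ for $Z'$ is also admissible for $Z$, every packing family $\{\overline{B}_{n_i}(x_i,\epsilon)\}$ with centres in $Z$ has its centres in $Z'$, and every $(n,\epsilon)$-separated subset of $Z$ is one of $Z'$; pushing these inclusions through the relevant infima and suprema, and then through the (monotone) operations defining $\htop^B$, $\htop^P(\cdot,\cdot,\epsilon)$ and $\htop^{UC}$, yields the three inequalities. For (ii), the bound $\M^s_\epsilon(Z)\le\sum_i\M^s_\epsilon(Z_i)$ is countable subadditivity of the outer measures $\M^s_{N,\epsilon}$ together with monotone convergence in $N$; if $s>\sup_i\htop^B(T,Z_i)$ then $\M^s(Z_i)=0$, hence (by monotonicity in $\epsilon$) $\M^s_\epsilon(Z_i)=0$ for every $\epsilon$, so $\M^s_\epsilon(Z)=0$ for every $\epsilon$, so $\htop^B(T,Z)\le s$; now let $s$ decrease to $\sup_i\htop^B(T,Z_i)$. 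The packing statement is identical, using the countable subadditivity of ${\mathcal P}^s_\epsilon$ recorded just before the Proposition, working at fixed $\epsilon$ and then letting $\epsilon\to0$.

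For (iii), the inequality $\htop^B\le\htop^P$ rests on the elementary fact that a maximal packing is a cover: fixing $\epsilon,s,N$, choose by Zorn's lemma a maximal pairwise disjoint family $\{\overline{B}_N(x_i,\epsilon)\}_i$ with $x_i\in Z$ (countable, by separability of $X$); maximality forces $Z\subseteq\bigcup_i B_N(x_i,3\epsilon)$, whence $\M^s_{N,3\epsilon}(Z)\le\sum_i e^{-sN}\le P^s_{N,\epsilon}(Z)$. Letting $N\to\infty$ gives $\M^s_{3\epsilon}(Z)\le P^s_\epsilon(Z)$, and then subadditivity of $\M^s_{3\epsilon}$ applied to an arbitrary countable cover of $Z$, followed by an infimum, gives $\M^s_{3\epsilon}(Z)\le{\mathcal P}^s_\epsilon(Z)$. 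Thus if $s<\htop^B(T,Z)$ then $\sup_\epsilon{\mathcal P}^s_\epsilon(Z)\ge\sup_\epsilon\M^s_{3\epsilon}(Z)=\M^s(Z)=\infty$, so ${\mathcal P}^s_{\epsilon_0}(Z)=\infty$ for some $\epsilon_0$, i.e. $s\le\htop^P(T,Z,\epsilon_0)\le\htop^P(T,Z)$; take the supremum over such $s$. For $\htop^P\le\htop^{UC}$: in any pairwise disjoint family $\{\overline{B}_{n_i}(x_i,\epsilon)\}$ the centres sharing a fixed value $n_i=n$ form an $(n,\epsilon)$-separated subset of $Z$, hence number at most $r_n(Z,\epsilon)$; therefore $P^s_{N,\epsilon}(Z)\le\sum_{n\ge N}r_n(Z,\epsilon)e^{-sn}$, which tends to $0$ as $N\to\infty$ whenever $s>\limsup_n\frac1n\log r_n(Z,\epsilon)$. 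Then $P^s_\epsilon(Z)=0$, hence ${\mathcal P}^s_\epsilon(Z)=0$ (trivial cover), hence $\htop^P(T,Z,\epsilon)\le\limsup_n\frac1n\log r_n(Z,\epsilon)$; let $\epsilon\to0$.

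For (iv), in view of (iii) it suffices to prove $\htop^{UC}(T,Z)\le\htop^B(T,Z)$ when $Z$ is compact and $T$-invariant. Fix $s>\htop^B(T,Z)$ and $\epsilon>0$. Then $\M^s_{1,\epsilon}(Z)=0$, so there is a countable cover of $Z$ by open Bowen balls $B_{n_i}(x_i,\epsilon)$ with $\sum_i e^{-sn_i}<1$, and by compactness a finite subcover $\{B_{n_i}(x_i,\epsilon)\}_{i=1}^k$ with $Q:=\sum_{i=1}^k e^{-sn_i}<1$; set $M=\max_{1\le i\le k}n_i$. The crucial step is a concatenation argument. Given $y\in Z$ and $m\in\N$, build a word $(i_1,\dots,i_p)$: $y\in B_{n_{i_1}}(x_{i_1},\epsilon)$; then $T^{n_{i_1}}y\in Z$ by invariance, so $T^{n_{i_1}}y\in B_{n_{i_2}}(x_{i_2},\epsilon)$; continue, stopping as soon as $L:=n_{i_1}+\dots+n_{i_p}\ge m$, so that $L\in[m,m+M)$. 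A telescoping estimate shows that any two points of $Z$ producing the same word lie within $2\epsilon$ in the metric $d_m$, while a generating-function count (using $Q<1$) bounds the number of words of total length $L$ by $e^{sL}/(1-Q)$. Hence $Z$ meets at most $M e^{sM}(1-Q)^{-1}e^{sm}$ sets of $d_m$-diameter $\le2\epsilon$, so $r_m(Z,4\epsilon)\le M e^{sM}(1-Q)^{-1}e^{sm}$, giving $\limsup_m\frac1m\log r_m(Z,4\epsilon)\le s$; let $\epsilon\to0$ and then $s\downarrow\htop^B(T,Z)$. I expect the bookkeeping in this coding step — controlling the overshoot $M$, verifying the $d_m$-closeness of equal-word points, and counting the words — to be the main obstacle; everything else is routine manipulation of the critical-value definitions. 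Alternatively one may simply invoke the classical coincidence of Bowen's entropy, upper capacity entropy and topological entropy on compact invariant sets (Bowen \cite{Bow73}; see also Pesin \cite{Pes97}).
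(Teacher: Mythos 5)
Your proposal is correct and, for items (i)--(iii), follows essentially the same route as the paper: monotonicity and countable subadditivity for (i)--(ii), a ``maximal packing is a cover'' comparison to get $\M^s$ below $P^s$, and a counting bound on centres sharing a fixed $n_i$ to get $\htop^P\le\htop^{UC}$. The paper's bookkeeping uses radii $2\epsilon+\delta$ where you use $3\epsilon$, and for the second half of (iii) it argues contrapositively with the explicit threshold $m_k>e^{kt}(1-e^{t-s})$ where you sum $\sum_{n\ge N}r_n(Z,\epsilon)e^{-sn}$ directly; these are cosmetic variants of the same idea.

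One slip in (iii) is worth flagging: from $\sup_\epsilon{\mathcal P}^s_\epsilon(Z)=\infty$ you conclude ``${\mathcal P}^s_{\epsilon_0}(Z)=\infty$ for some $\epsilon_0$,'' which does not follow from a divergent supremum alone (the individual values could all be finite). The conclusion you actually need survives, though: since $\M^s_{3\epsilon}(Z)\uparrow\M^s(Z)=\infty$ as $\epsilon\downarrow 0$, for small $\epsilon$ one has $\M^s_{3\epsilon}(Z)>0$, hence ${\mathcal P}^s_\epsilon(Z)>0$, which already forces $s\le\htop^P(T,Z,\epsilon)$ by the critical-value definition. The paper phrases this carefully as ``$\M^s_{2\epsilon+\delta}(Z)\ge 1$ when $\epsilon$ and $\delta$ are small enough.''

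For (iv), the paper simply invokes Bowen's theorem that $\htop^B$ agrees with $\htop^{UC}$ on compact invariant sets and combines it with (iii). Your concatenation and word-counting sketch is precisely the standard proof of that fact and the estimates (same word implies $d_m$-distance $<2\epsilon$; at most $e^{sL}/(1-Q)$ words of total length $L$) are right, so this is extra detail rather than a divergence -- and you correctly note that the citation route suffices.
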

\begin{proof}
 (i) and (ii) follow directly from the definitions of topological entropies. To see (iii), let $Z\subseteq X$ and assume $0<s<\htop^B(T,Z)$. For any $n\in \N$ and $\epsilon>0$, let $R=R_n(Z,\epsilon)$ be the largest number so that there is a disjoint family $\{\overline{B}_n(x_i,\epsilon)\}_{i=1}^R$ with $x_i\in Z$. Then it is easy to see that for any  $\delta>0$,
$$\bigcup_{i=1}^R\overline{B}_n(x_i,2\epsilon+\delta)\supseteq Z,
$$
which implies that $\M^s_{n,2\epsilon+\delta}(Z)\leq  Re^{-ns}\leq P^s_{n,\epsilon}(Z)$ for any $s\geq 0$, and hence $\M^s_{2\epsilon+\delta}(Z)\leq P^s_\epsilon(Z)$.
By (ii), $\M^s_{2\epsilon+\delta}(Z)\leq {\mathcal P}^s_\epsilon(Z)$. Since $0<s<\htop^B(T,Z)$, we have $\M^s(Z)=\infty$ and thus $\M^s_{2\epsilon+\delta}(Z)\geq 1$ when
$\epsilon$ and $\delta$ are small enough. Hence  ${\mathcal P}^s_\epsilon(Z)\geq 1$ and $\htop^P(T,Z,\epsilon)\geq s$ when $\epsilon$ is small. Therefore $\htop^P(T,Z)=\lim_{\epsilon\to 0} \htop^P(T,Z,\epsilon)\geq s$. This implies that $\htop^B(T, Z)\leq \htop^P(T,Z)$.

Next we show that $\htop^{P}(T, Z)\leq \htop^{UC}(T, Z)$. Our argument is modified slightly from the proof of \cite[Lemma 3.7]{Fal03}.
Assume that  $\htop^{P}(T, Z)>0$; otherwise there is nothing left to prove. Choose $0<t<s<\htop^{P}(T, Z)$. Then there exists $\delta>0$ such that for $0<\epsilon<\delta$,  $\htop^{P}(T, Z, \epsilon)>s$ and thus $P^s_\epsilon(Z)\geq {\mathcal P}_\epsilon^s(Z)=\infty$. Thus for any $N$, there exists a countable pairwise disjoint families $\{\overline{B}_{n_i}(x_i,\epsilon)\}$ such that $x_i\in Z$, $n_i\geq N$ for all $i$, and $1< \sum_ie^{-n_is}$. For each $k$, let $m_k$ be the number of $i$ so that $n_i=k$. Then we have
$$
1<\sum_{k=N}^\infty m_k e^{-ks}.
$$
There must be some $k\geq N$ with $m_k>e^{kt}(1-e^{t-s})$, otherwise the above sum is at most $\sum_{k=1}^\infty e^{kt-ks}(1-e^{t-s})<1$.   Let $r_k(Z,\epsilon)$ denote the
largest cardinality of  $(k,\epsilon)$-separated sets for $Z$. Then $r_k(Z,\epsilon)\geq m_k>e^{kt}(1-e^{t-s})$. Hence $\limsup\limits_{n\rightarrow \infty}
\frac{1}{n} \log r_n (Z, \epsilon)\geq t$. Letting $\epsilon\to 0$, we obtain $\htop^{UC}(T, Z)\geq t$. This is true for any $0<t<\htop^P(T,Z)$ so $\htop^{UC}(T, Z)\geq
\htop^P(T,Z)$.

When $Z\subseteq X$ is $T$-invariant and compact, Bowen  \cite{Bow73} proved that  $\htop^B(T, Z)= \htop^{UC}(T,Z)$; this together with  (iii) yields  (iv).
\end{proof}

\section{Variational principle for Bowen's topological entropy of subsets}
\label{S-3}

\subsection{Weighted topological entropy}

For any function $f:X\to [0,\infty)$,
 $N\in \N$ and $\epsilon>0$, define
\begin{equation}
\label{e-wei}
\W^s_{N,\epsilon}(f)=\inf\sum_ic_i\exp(-sn_i),
\end{equation}
 where the infimum is taken over all finite or countable families  $\{(B_{n_i}(x_i,\epsilon), c_i)\}$ such that
 $0<c_i<\infty$, $x_i\in X$,  $n_i\geq N$ and
 $$
  \sum_{i}c_i\chi_{B_i}\geq f,
 $$
 where $B_i:=B_{n_i}(x_i,\epsilon)$, and  $\chi_A$ denotes the characteristic function of $A$, i.e, $\chi_A(x)=1$ if $x\in A$ and $0$ if $x\in X\backslash  A$.

For $Z\subseteq X$ and $f=\chi_Z$ we set $\W^s_{N,\epsilon}(Z)=\W^s_{N,\epsilon}(\chi_Z)$.
The quantity $\W^s_{N, \epsilon}(Z)$ does not decrease as $N$ increases and $\epsilon$ decreases, hence the following
limits exist:
$$\W^s_\epsilon(Z) = \lim_{N\to \infty} \W^s_{N,\epsilon}(Z),\quad \W^s(Z)=\lim_{\epsilon\to 0}\W^s_\epsilon(Z).$$
We remark that  $\W^s$ is defined in a way which resembles the weighted Hausdorff measure in geometric measure theory (cf. \cite{Fed69, Mat95}).
Clearly, there exists a critical value
of the parameter s, which we will denote by $\htop^{WB}(T,Z)$,
where $\W^s(Z)$ jumps from $\infty$ to $0$, i.e.
\[
\W^s(Z) = \left\{
\begin{array}{ll}
0, & s > \htop^{WB}(T,Z),\\
\infty,& s < \htop^{WB}(T,Z).
\end{array}
\right.
\]
 We  call
$\htop^{WB}(T, Z)$ the {\it weighted Bowen's topological entropy of $T$ restricted to
$Z$} or, simply, the {\it weighted Bowen's topological entropy of $Z$}.

\subsection{Equivalence of $\htop^B$ and $\htop^{WB}$}

The following properties about  $\M^s$ (cf. Sect. \ref{S-2.2}) and $\W^s$ can be verifies directly from the definitions.
\begin{pro}
\label{pro-1.1}
\begin{itemize}
\item[(i)] For any $s\geq 0$, $N\in \N$ and $\epsilon>0$, both $\M^s_{N,\epsilon}$ and $\W^s_{N,\epsilon}$ are outer measures on $X$.
\item[(ii)] For any $s\geq 0$, both $\M^s$ and $\W^s$ are metric outer measures on $X$.
\end{itemize}
\end{pro}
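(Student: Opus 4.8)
The plan is to verify the three outer-measure axioms for $\M^s_{N,\epsilon}$ and $\W^s_{N,\epsilon}$ directly from their definitions, handling both in parallel since they differ only through the weights $c_i$, and then to deduce the metric property of the limits $\M^s,\W^s$ from a scale-uniform diameter bound on Bowen balls.

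For part (i) I would first note that the empty family is admissible in each definition (for $\W^s_{N,\epsilon}$ because $\chi_\emptyset\equiv 0$, so $\sum_i c_i\chi_{B_i}\geq\chi_\emptyset$ holds vacuously), hence $\M^s_{N,\epsilon}(\emptyset)=\W^s_{N,\epsilon}(\emptyset)=0$. Monotonicity is immediate: if $Z\subseteq Z'$, every countable family $\{B_{n_i}(x_i,\epsilon)\}$ covering $Z'$ covers $Z$, and every family $\{(B_{n_i}(x_i,\epsilon),c_i)\}$ with $\sum_i c_i\chi_{B_i}\geq\chi_{Z'}$ also satisfies $\sum_i c_i\chi_{B_i}\geq\chi_Z$. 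For countable subadditivity, given $Z\subseteq\bigcup_{j\geq 1}Z_j$ and $\eta>0$, I would choose for each $j$ a near-optimal admissible family for $Z_j$ (within $\eta 2^{-j}$ of $\M^s_{N,\epsilon}(Z_j)$, resp. $\W^s_{N,\epsilon}(Z_j)$; nothing to prove if some term is infinite) and take the union of these families, which is again countable. In the $\M$-case the union is a cover of $\bigcup_j Z_j\supseteq Z$ of total weight $\leq\sum_j\M^s_{N,\epsilon}(Z_j)+\eta$; in the $\W$-case, summing the inequalities $\sum_i c_i^{(j)}\chi_{B_i^{(j)}}\geq\chi_{Z_j}$ over $j$ and using $\sum_j\chi_{Z_j}\geq\chi_{\bigcup_j Z_j}\geq\chi_Z$ shows the union is admissible for $Z$ with weight $\leq\sum_j\W^s_{N,\epsilon}(Z_j)+\eta$. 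Letting $\eta\to 0$ finishes (i).

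Since $\M^s_{N,\epsilon}(Z)$ and $\W^s_{N,\epsilon}(Z)$ are non-decreasing as $N\uparrow\infty$ and as $\epsilon\downarrow 0$, the quantities $\M^s=\lim_{\epsilon\to 0}\lim_{N\to\infty}\M^s_{N,\epsilon}$ and $\W^s=\lim_{\epsilon\to 0}\lim_{N\to\infty}\W^s_{N,\epsilon}$ are \emph{increasing} iterated limits of outer measures, and an increasing pointwise limit of outer measures is again an outer measure (null set and monotonicity pass to the limit trivially; for countable subadditivity one has, for every index $\alpha$, $\mu_\alpha(\bigcup_j Z_j)\leq\sum_j\mu_\alpha(Z_j)\leq\sum_j\mu(Z_j)$, whence $\mu(\bigcup_j Z_j)\leq\sum_j\mu(Z_j)$ after taking the supremum over $\alpha$). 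Thus $\M^s$ and $\W^s$ are outer measures. For the metric property in (ii), the key point is that $d_n(x,y)\geq d(x,y)$ forces $B_n(x,\epsilon)\subseteq B(x,\epsilon)$, so every Bowen ball with parameter $\epsilon$ has $d$-diameter at most $2\epsilon$, \emph{uniformly in $n$}. Given $A,B\subseteq X$ with $\rho:=d(A,B)>0$, I would restrict to $0<\epsilon<\rho/2$: any admissible family for $A\cup B$ (a cover in the $\M$-case, a weighting in the $\W$-case) then splits into the sub-family of balls meeting $A$ and the sub-family of balls meeting $B$, which are disjoint (no ball of diameter $<\rho$ meets both) and which are admissible for $A$ and for $B$ respectively; hence $\M^s_{N,\epsilon}(A\cup B)\geq\M^s_{N,\epsilon}(A)+\M^s_{N,\epsilon}(B)$, and likewise for $\W^s_{N,\epsilon}$. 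Combined with the subadditivity from (i) this is an equality, and letting $N\to\infty$ then $\epsilon\to 0$ yields $\M^s(A\cup B)=\M^s(A)+\M^s(B)$ and $\W^s(A\cup B)=\W^s(A)+\W^s(B)$.

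This is essentially the classical Method I / Method II construction of (metric) outer measures from a set function (cf. \cite{Fed69,Mat95}), adapted to the weighted setting, so I do not expect a genuine obstacle. The only observation that is truly needed is the scale-uniform bound $\mbox{diam}_d(B_n(x,\epsilon))\leq 2\epsilon$, which lets a single threshold $\epsilon<\rho/2$ separate $A$ and $B$ simultaneously at all depths $n$; the weighted variant $\W^s$ requires only a little extra bookkeeping with the pointwise inequalities for the coefficients $c_i$.
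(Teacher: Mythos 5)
Your verification is correct; it is the standard Carath\'eodory (``Method~II'') construction of a metric outer measure, carried out in parallel for the unweighted cover version $\M^s_{N,\epsilon}$ and the weighted version $\W^s_{N,\epsilon}$. The paper gives no proof of this proposition at all --- it merely asserts that the properties ``can be verified directly from the definitions'' --- so there is no alternative route in the source to compare against. The one non-tautological point, which your write-up correctly isolates, is that $d_n\geq d$ gives the scale-uniform bound $\operatorname{diam}_d(B_n(x,\epsilon))\leq 2\epsilon$, so that a single threshold $\epsilon<d(A,B)/2$ separates $A$ from $B$ at every depth $n$ simultaneously when passing to the limits defining $\M^s$ and $\W^s$.
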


We remark that $\M^s$ and $\W^s$ depend not only $s$ but also the TDS $(X,T)$. However, $\M^s$ and $\W^s$ are purely topological and  independent of the special choice of the metric $d$.

The main result of this subsection is the following.

\begin{pro}
\label{pro-2.1}
Let  $Z\subseteq X$. Then for any $s\geq 0$ and  $\epsilon,\delta>0$, we have
\begin{equation*}
\label{e-ine}
\M^{s+\delta}_{N,6\epsilon}(Z)\leq \W^s_{N,\epsilon}(Z)\leq \M^{s}_{N,\epsilon}(Z),
\end{equation*}
when $N$ is large enough. As a result, $\M^{s+\delta}(Z)\leq \W^s(Z)\leq \M^s(Z)$ and $\htop^B(T,Z)=\htop^{WB}(T,Z)$.
\end{pro}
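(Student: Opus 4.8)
The right-hand inequality is trivial: given any countable family $\{B_{n_i}(x_i,\epsilon)\}$ with $n_i\ge N$ and $\bigcup_iB_{n_i}(x_i,\epsilon)\supseteq Z$, taking all weights $c_i=1$ produces an admissible family for $\W^s_{N,\epsilon}(\chi_Z)$ with cost $\sum_ie^{-sn_i}$, so $\W^s_{N,\epsilon}(Z)\le\M^s_{N,\epsilon}(Z)$. The work is the left-hand inequality. The plan is: given an arbitrary weighted family $\{(B_i,c_i)\}_i$ with $B_i=B_{n_i}(x_i,\epsilon)$, $n_i\ge N$, $c_i>0$ and $\sum_ic_i\chi_{B_i}\ge\chi_Z$ (we may assume $\sum_ic_ie^{-sn_i}<\infty$), manufacture from it an honest cover of $Z$ by Bowen balls of radius $6\epsilon$ whose cost is controlled by $\sum_ic_ie^{-sn_i}$ after inserting an extra $\delta$ into the exponent. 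The obstacle to a naive selection is that overlapping Bowen balls may have \emph{unbounded} multiplicity in a general TDS, so one cannot simply discard redundant balls; the device that handles this is to re-center the balls at points of $Z$ and then select a disjoint subfamily \emph{at each fixed scale} $m$.

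I would first decompose $Z$ by scale. For $m\ge N$ set $\varphi_m(x)=\sum_{i:\,n_i=m}c_i\chi_{B_i}(x)$, so $\sum_{m\ge N}\varphi_m(x)\ge 1$ for all $x\in Z$. Since $\sum_{m\ge 1}6/(\pi^2m^2)=1$, for every $x\in Z$ there is some $m\ge N$ with $\varphi_m(x)\ge\theta_m:=6/(\pi^2m^2)$; hence $Z=\bigcup_{m\ge N}Z_m$ with $Z_m=\{x\in Z:\varphi_m(x)\ge\theta_m\}$. By subadditivity of the outer measure $\M^{s+\delta}_{N,6\epsilon}$ (Proposition \ref{pro-1.1}(i)) it then suffices to bound $\M^{s+\delta}_{N,6\epsilon}(Z_m)$ for each $m$.

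Fix $m$. Every $x\in Z_m$ lies in some $B_i$ with $n_i=m$, and if $z\in B_i\cap Z_m$ then $B_i\subseteq B_m(z,2\epsilon)$; so $Z_m$ is covered by balls $B_m(z_i,2\epsilon)$ with centers $z_i\in Z_m$, one per depth-$m$ ball meeting $Z_m$. Choose a maximal subfamily $J_m$ among these with the balls $\{B_m(z_i,2\epsilon)\}_{i\in J_m}$ pairwise disjoint; $J_m$ is finite, because disjointness forces the $z_i$ to be $2\epsilon$-separated in $d_m$ and $X$ is totally bounded, and by maximality $Z_m\subseteq\bigcup_{i\in J_m}B_m(z_i,6\epsilon)$. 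The key estimate is now immediate: since the $z_i$ ($i\in J_m$) are pairwise $2\epsilon$-separated in $d_m$, no original ball $B_k$ of depth $m$ (which has $d_m$-radius $\epsilon$) can contain two of them, so summing the inequalities $\varphi_m(z_i)\ge\theta_m$ over $i\in J_m$ gives $\theta_m\,|J_m|\le\sum_{i\in J_m}\varphi_m(z_i)\le\sum_{k:\,n_k=m}c_k$. Hence $\M^{s+\delta}_{N,6\epsilon}(Z_m)\le|J_m|\,e^{-(s+\delta)m}\le\theta_m^{-1}e^{-(s+\delta)m}\sum_{k:\,n_k=m}c_k$.

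Summing over $m\ge N$,
\[
\M^{s+\delta}_{N,6\epsilon}(Z)\le\sum_{m\ge N}\frac{\pi^2m^2}{6}\,e^{-\delta m}\Big(e^{-sm}\sum_{k:\,n_k=m}c_k\Big)\le\Big(\sup_{m\ge N}\frac{\pi^2m^2}{6}e^{-\delta m}\Big)\sum_ic_ie^{-sn_i},
\]
and the supremum is $\le 1$ once $N\ge N_0(\delta)$, because $\frac{\pi^2m^2}{6}e^{-\delta m}\to 0$. Taking the infimum over admissible weighted families yields $\M^{s+\delta}_{N,6\epsilon}(Z)\le\W^s_{N,\epsilon}(Z)$ for all $N\ge N_0(\delta)$. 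Letting $N\to\infty$ and then $\epsilon\to 0$ gives $\M^{s+\delta}(Z)\le\W^s(Z)\le\M^s(Z)$; and since $\delta>0$ is arbitrary, the critical exponents of $\M^\bullet(Z)$ and $\W^\bullet(Z)$ coincide, i.e. $\htop^B(T,Z)=\htop^{WB}(T,Z)$. The only delicate point is the multiplicity issue: the factor $6$ in the radius is precisely the combined price of re-centering ($\epsilon\to 2\epsilon$) and of the Vitali dilation ($2\epsilon\to 6\epsilon$) needed to force multiplicity one at each scale, while the scale-splitting costs only the polynomial factor $\theta_m^{-1}=O(m^2)$, which is absorbed by $e^{-\delta m}$ for $N$ large.
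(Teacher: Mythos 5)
Your argument is correct and follows a genuinely different route from the paper's. The paper proves the left inequality by Federer's weighted-cover device: at each scale $n$ it approximates the weights $c_i$ from above by positive integers, iteratively carves out $m$ disjoint Vitali subfamilies of the \emph{original} $\epsilon$-balls (via the $5r$-covering lemma) and keeps the smallest one to get the cardinality bound; since this only treats a finite truncation $\I_{n,k}$ of the index set, it then invokes compactness of the space of compact subsets in the Hausdorff metric to pass to the full family, during which the dilation constant drifts from $5\epsilon$ to $5.5\epsilon$ to $6\epsilon$. You instead re-center: every depth-$m$ ball $B_m(x_k,\epsilon)$ meeting $Z_m$ sits inside some $B_m(z,2\epsilon)$ with $z\in Z_m$, and a single elementary maximal-disjoint selection at radius $2\epsilon$ produces a finite, $2\epsilon$-separated set of centers in $Z_m$ that already covers $Z_m$ at radius $6\epsilon$ (the factor $3$ of the equal-radius Vitali, not the $5r$-lemma). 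The $2\epsilon$-separation forces each original $\epsilon$-ball to contain at most one selected center, so the cardinality bound falls out of $\theta_m\,|J_m|\le\sum_{i\in J_m}\varphi_m(z_i)\le\sum_{k:n_k=m}c_k$ in one line, with no integer approximation and no Hausdorff-limit compactness argument, and no truncation is ever needed. Both proofs then sum across scales using the same convergent series and the same polynomial loss $O(m^2)$ absorbed by $e^{-\delta m}$ once $N$ is large (you also need $N\ge 2$ for $\sum_{m\ge N}\theta_m<1$, which is the same minor requirement the paper meets). The paper's Federer-style argument is the more robust template — it is the one used for weighted Hausdorff measures with covers by arbitrary sets — but here the covering sets are metric balls, which is precisely what your re-centering exploits to give a shorter, more self-contained proof.
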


To prove Proposition \ref{pro-2.1}, we need the following lemma.

\begin{lem}
[\cite{Mat95}, Theorem 2.1]
\label{lem-2.1}

Let $(X, d)$ be a compact metric space and ${\mathcal
B}=\{B(x_i,r_i)\}_{i\in \mathcal I}$ be a family of closed (or open)
balls in $X$. Then there exists a finite or countable subfamily
${\mathcal B'}=\{B(x_i,r_i)\}_{i\in {\mathcal I}'}$ of pairwise
disjoint balls in ${\mathcal B}$ such that
$$\bigcup_{B\in {\mathcal B}} B\subseteq \bigcup_{i\in {\mathcal I}'}B(x_i,5r_i).$$
\end{lem}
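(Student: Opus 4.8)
The plan is to prove this classical ``$5r$-covering lemma'' by a greedy selection performed one dyadic scale at a time. First I would dispose of trivialities: we may clearly assume each $r_i>0$, and, since $X$ is compact, every ball of radius larger than $\mathrm{diam}(X)$ equals $X$, so we may also assume $R:=\sup_{i\in\mathcal I}r_i<\infty$. Then decompose the index set into dyadic scales: for each integer $n\ge 0$ set
\[
\mathcal I_n=\Big\{\,i\in\mathcal I:\ R\,2^{-n-1}<r_i\le R\,2^{-n}\,\Big\},
\]
so that $\mathcal I=\bigcup_{n\ge 0}\mathcal I_n$.

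Next I would choose the subfamily by recursion on $n$. Let $\mathcal I_0'\subseteq\mathcal I_0$ index a pairwise disjoint subfamily of $\{B(x_i,r_i):i\in\mathcal I_0\}$ of maximal cardinality; having fixed $\mathcal I_0',\dots,\mathcal I_{n-1}'$, let $\mathcal I_n'\subseteq\mathcal I_n$ be of maximal cardinality subject to the condition that $\{B(x_i,r_i):i\in\mathcal I_0'\cup\dots\cup\mathcal I_n'\}$ is pairwise disjoint, and put $\mathcal I'=\bigcup_{n\ge 0}\mathcal I_n'$. By construction the balls indexed by $\mathcal I'$ are pairwise disjoint. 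Observe that two disjoint balls $B(x,\rho),B(x',\rho')$ satisfy $d(x,x')\ge\max\{\rho,\rho'\}$ (otherwise one centre would lie in both balls), so the centres of the balls in $\mathcal I_n'$ form an $R2^{-n-1}$-separated subset of $X$; since $X$ is totally bounded this set is finite. Hence each $\mathcal I_n'$ is finite, which both renders the ``maximal cardinality'' selections above elementary — only a recursion over $\N$ is used, no Zorn's lemma — and shows that $\mathcal I'$ is at most countable.

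Finally I would check the covering property. Fix any $i\in\mathcal I$, say $i\in\mathcal I_n$. By maximality of $\mathcal I_n'$ the ball $B(x_i,r_i)$ meets $B(x_j,r_j)$ for some $j\in\mathcal I_0'\cup\dots\cup\mathcal I_n'$ (take $j=i$ if $i\in\mathcal I_n'$; otherwise adjoining $i$ to $\mathcal I_n'$ would violate disjointness). Writing $j\in\mathcal I_m$ with $m\le n$, we have $r_j>R2^{-m-1}\ge R2^{-n-1}\ge r_i/2$, that is, $r_i<2r_j$. Picking $z\in B(x_i,r_i)\cap B(x_j,r_j)$, every $y\in B(x_i,r_i)$ satisfies
\[
d(y,x_j)\le d(y,x_i)+d(x_i,z)+d(z,x_j)<r_i+r_i+r_j=2r_i+r_j<5r_j,
\]
so $B(x_i,r_i)\subseteq B(x_j,5r_j)$. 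Taking the union over $i\in\mathcal I$ gives $\bigcup_{B\in\mathcal B}B\subseteq\bigcup_{i\in\mathcal I'}B(x_i,5r_i)$, as required. The closed-ball case is identical, with non-strict inequalities in the first two estimates; the final bound $2r_i+r_j<5r_j$ remains strict because $r_i<2r_j$.

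There is no real obstacle here — this is Theorem~2.1 of \cite{Mat95} — and the only point demanding care is the numerology: running the same scheme with dyadic ratio $\tau>1$ in place of $2$ produces the enlargement factor $2\tau+1$, and the choice $\tau=2$ is precisely what yields the factor $5$ in the statement.
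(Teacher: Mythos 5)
Your argument is correct and complete: the dyadic-scale decomposition with greedy maximal disjoint selections, the separation bound $d(x_j,x_{j'})\ge\max\{r_j,r_{j'}\}$ giving finiteness of each scale by total boundedness, and the estimate $2r_i+r_j<5r_j$ from $r_i<2r_j$ constitute exactly the standard proof of the $5r$-covering theorem, which is what the paper invokes by citing \cite[Theorem 2.1]{Mat95} without reproducing a proof. The only twist worth noting is that you exploit compactness to replace the Zorn-type maximality used in the general-metric-space version by a finite maximal-cardinality selection, which is a legitimate simplification in this setting.
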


\bigskip

\begin{proof}[Proof of Proposition \ref{pro-2.1}] Let $Z\subseteq X$, $s\geq 0$, $\epsilon, \delta>0$. Taking  $f=\chi_Z$ and $c_i\equiv 1$ in the definition \eqref{e-wei}, we see that
 $\W^s_{N,\epsilon}(Z)\leq \M^{s}_{N,\epsilon}(Z)$ for each $N\in \N$. In the following, we prove that $\M^{s+\delta}_{N,6\epsilon}(Z)\leq \W^s_{N,\epsilon}(Z)$
 when $N$ is large enough.

 Assume that $N\geq 2$ such that $n^2e^{-n\delta}\le 1$ for $n\geq N$.
 Let $\{(B_{n_i}(x_i,\epsilon), c_i)\}_{i\in \mathcal I}$ be a  family so that $\I\subseteq \N$, $x_i\in X$,  $0<c_i<\infty$,
  $n_i\geq N$ and
 \begin{equation} \label{e-gez}
  \sum_{i}c_i\chi_{B_i}\geq \chi_Z,
 \end{equation}
 where $B_i:=B_{n_i}(x_i,\epsilon)$. We show below that
 \begin{equation}
 \label{e-key}
 \M^{s+\delta}_{N,6\epsilon}(Z)\leq \sum_{i\in \I}c_ie^{-n_is},
 \end{equation}
 which implies  $\M^{s+\delta}_{N,6\epsilon}(Z)\leq \W^s_{N,\epsilon}(Z)$.

 Denote  $\I_n:=\{i\in \I:\; n_i=n\}$ and   $\I_{n,k}=\{i\in \I_n:\; i\leq k\}$ for $n\geq N$ and $k\in \N$.
 Write for brevity $B_i:=B_{n_i}(x_i, \epsilon)$ and  $5B_i:=B_{n_i}(x_i, 5\epsilon)$ for $i\in \I$. Obviously we may assume $B_i\neq B_j$ for $i\neq j$.
 For $t>0$, set
 \begin{equation*}
 \begin{split}
  Z_{n,t}&=\Big\{x\in Z:\; \sum_{i\in \I_{n}}c_i\chi_{B_i}(x)>t\Big\} \quad\mbox{ and }\\
 Z_{n,k,t}&=\Big\{x\in Z:\; \sum_{i\in \I_{n,k}}c_i\chi_{B_i}(x)>t\Big\}.
  \end{split}
 \end{equation*}
 We divide the proof of \eqref{e-key} into the following three steps.

  {\sl Step 1. For each $n\geq N$,  $k\in \N$ and $t>0$, there exists a finite set $\J_{n,k, t}\subseteq \I_{n,k}$ such that
 the balls $B_i$ {\rm ($i\in \J_{n,k, t}$)} are pairwise disjoint, $Z_{n,k,t}\subseteq \bigcup_{i\in \J_{n,k, t}}5B_i$ and
 $$\#(\J_{n,k,t})e^{-ns}\leq \frac{1}{t}\sum_{i\in \I_{n,k}}c_ie^{-ns}.$$
 }
 To prove the above result, we adopt the method of Federer \cite[2.10.24]{Fed69} used in the study of weighted Hausdorff measures (see also Mattila \cite[Lemma 8.16]{Mat95}). Since  $\I_{n,k}$ is finite, by approximating the $c_i$'s from above,
 we may assume that each $c_i$ is a positive rational, and then multiplying with a common denominator we may assume that each $c_i$ is
 a positive integer. Let $m$ be the least integer with $m\geq t$. Denote $\B=\{B_i,\; i\in \I_{n,k}\}$ and define $u: \B\to \Z$ by $u(B_i)=c_i$.
 We define by induction integer-valued functions $v_0,v_1,\ldots, v_m$ on $\B$ and sub-families $\B_1,\ldots, \B_m$ of $\B$ starting with $v_0=u$.
 Using Lemma \ref{lem-2.1} (in which we take the metric $d_n$ instead of $d$) we  find a pairwise disjoint subfamily $\B_1$ of $\B$ such that
 $\bigcup_{B\in \B}B\subseteq \bigcup_{B\in \B_1} 5B$, and hence $Z_{n,k,t}\subseteq \bigcup_{B\in \B_1} 5B$. Then by repeatedly using Lemma \ref{lem-2.1}, we can  define inductively for $j=1, \ldots, m$,  disjoint subfamilies $\B_j$ of $\B$ such that
 $$\B_j\subseteq \{B\in \B:\; v_{j-1}(B)\geq 1\},\quad Z_{n,k,t}\subseteq \bigcup_{B\in \B_j} 5B$$
 and the functions $v_j$ such that
 $$
 v_j(B)=\left\{\begin{array}{ll}
 v_{j-1}(B)-1 & \mbox { for } B\in \B_j,\\
  v_{j-1}(B) & \mbox { for } B\in \B\backslash \B_j.
\end{array}
\right.
$$
This is possible since for $j<m$,
$Z_{n,k,t}\subseteq \big\{x: \sum_{B\in \B:\; B\ni x}v_j(B)\geq m-j\big\}$,
whence every $x\in Z_{n,k,t}$ belongs to some ball $B\in \B$ with $v_j(B)\geq 1$. Thus
\begin{eqnarray*}
 \sum_{j=1}^m \#(\B_j) e^{-ns}&=&\sum_{j=1}^m \sum_{B\in \B_j} (v_{j-1}(B)-v_j(B)) e^{-ns} \\
&\leq & \sum_{B\in \B}\sum_{j=1}^m (v_{j-1}(B)-v_j(B)) e^{-ns}\leq \sum_{B\in \B} u(B)e^{-ns}=\sum_{i\in \I_{n,k}} c_ie^{-ns}.
\end{eqnarray*}
Choose $j_0\in \{1,\ldots, m\}$ so that $\#(\B_{j_0})$ is the smallest. Then $$\#(\B_{j_0}) e^{-ns}\leq \frac{1}{m}\sum_{i\in \I_{n,k}} c_ie^{-ns}
\leq \frac{1}{t}\sum_{i\in \I_{n,k}} c_ie^{-ns}.$$ Hence  $\J_{n,k,t}=\{i\in \I:\; B_i\in \B_{j_0}\}$ is desired.

{\sl Step 2. For each $n\geq N$ and $t>0$, we have \begin{equation}
\label{e-2.3}
\M^{s+\delta}_{N,6\epsilon}(Z_{n,t})\leq  \frac{1}{n^2t}\sum_{i\in \I_{n}}c_ie^{-ns}.
 \end{equation}
 }
To see this, assume $Z_{n,t}\neq \emptyset$; otherwise this is nothing to prove.  Since $Z_{n,k,t}\uparrow Z_{n,t}$, $Z_{n, k,t}\neq \emptyset$ when $k$ is large enough.
 Let $\J_{n,k,t}$ be the sets constructed in Step 1. Then $\J_{n,k,t}\neq \emptyset$ when $k$ is large enough.
 Define $E_{n,k,t}=\{x_i:\; i\in \J_{n,k,t}\}$.
 Note that the family of all non-empty compact subsets of $X$ is compact with respect to the Hausdorff distance (cf. Federer \cite[2.10.21]{Fed69}). It follows that there is a subsequence $(k_j)$ of natural numbers and a non-empty compact set $E_{n,t}\subset X$ such that $E_{n,k_j,t}$ converges to $E_{n,t}$ in  the Hausdorff distance as $j\to \infty$. Since any two points in $E_{n,k,t}$ have a distance (with respect to $d_n$) not less than $\epsilon$, so do the points in $E_{n,t}$. Thus $E_{n,t}$ is a finite set, moreover, $\#(E_{n,k_j,t})=\#(E_{n,t})$ when $j$ is large enough.  Hence
 $$
 \bigcup_{x\in E_{n,t}} B_n(x,5.5\epsilon)\supseteq  \bigcup_{x\in E_{n,k_j,t}} B_n(x,5\epsilon)=\bigcup_{i \in \J_{n,k_j,t}} 5 B_i\supseteq Z_{n, k_j, t}
 $$
 when $j$ is large enough, and thus $\bigcup_{x\in E_{n,t}} B_n(x,6\epsilon)\supseteq Z_{n,t}$. By the way, since
 $\#(E_{n,k_j,t})=\#(E_{n,t})$ when $j$ is large enough, we have $\#(E_{n,t})e^{-ns}\leq \frac{1}{t}\sum_{i\in \I_{n}}c_ie^{-ns}$.
 This forces $$\M^{s+\delta}_{N,6\epsilon}(Z_{n,t})\leq \#(E_{n,t})e^{-n(s+\delta)}\leq \frac{1}{e^{n\delta}t}\sum_{i\in \I_{n}}c_ie^{-ns}\leq  \frac{1}{n^2t}\sum_{i\in \I_{n}}c_ie^{-ns}.$$

 {\sl Step 3. For any $t\in (0,1)$, we have $\M^{s+\delta}_{N,6\epsilon}(Z)\leq \frac{1}{t} \sum_{i\in \I}c_ie^{-n_is}.$ As a result,  \eqref{e-key} holds.\\
  }
To see this, fix $t\in (0,1)$. Note that $\sum_{n=N}^\infty
n^{-2}<1$. It follows that $Z\subseteq \bigcup_{n=N}^\infty Z_{n,
n^{-2}t}$ from \eqref{e-gez}. Hence by Proposition \ref{pro-1.1}(i)
and \eqref{e-2.3}, we have
$$
\M^{s+\delta}_{N,6\epsilon}(Z)\leq
\sum_{n=N}^\infty\M^{s+\delta}_{N,6\epsilon}(Z_{n, n^{-2}t})\leq
\sum_{n=N}^\infty \frac{1}{t}\sum_{i\in
\I_{n}}c_ie^{-ns}=\frac{1}{t}\sum_{i\in \I}c_ie^{-n_is},$$ which
finishes the proof of the proposition.
\end{proof}

\subsection{A dynamical Frostman's lemma and the proof of Theorem \ref{thm-1.1} (i)}
\label{S-3}
To  prove Theorem \ref{thm-1.1}(i), we need the following dynamical Frostman's lemma, which is an analogue of the classical Frostman's lemma in in compact metric space.  Our proof is adapt from Howroyd's elegant argument (cf. \cite[Theorem 2]{How95}, \cite[Theorem 8.17]{Mat95}).

\begin{lem} \label{pro-3.1} Let $K$ be a non-empty compact subset of $X$. Let $s\geq 0$, $N\in \N$ and $\epsilon>0$. Suppose that
 $c:=\W^s_{N,\epsilon}(K)>0$. Then there is a Borel probability measure $\mu$ on $X$ such that  $\mu(K)=1$ and
$$
\mu(B_n(x,\epsilon))\leq \frac{1}{c}e^{-ns},\quad \forall\; x\in X,\; n\geq N.
$$
\end{lem}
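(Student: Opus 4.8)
The plan is to prove this dynamical Frostman's lemma by the standard Howroyd-type argument: construct $\mu$ as a weak-$*$ limit of a sequence of finitely-supported measures obtained by a greedy/inductive ``loading'' procedure, analogous to the classical construction for Hausdorff measure. First I would reformulate the hypothesis: since $\W^s_{N,\epsilon}(K) = c > 0$, for every finite or countable family $\{(B_{n_i}(x_i,\epsilon),c_i)\}$ with $n_i \geq N$ and $\sum_i c_i \chi_{B_i} \geq \chi_K$ one has $\sum_i c_i e^{-n_i s} \geq c$. This is exactly a ``covering inequality'' dual to the desired ``packing/measure'' inequality, so the natural tool is a minimax or Hahn--Banach separation argument, or equivalently Howroyd's iterative construction.

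The key steps, in order: (1) Work on a suitable function-space setup. Consider the countable index set of all admissible Bowen balls $B = B_n(x,\epsilon)$ with $n \geq N$ and $x$ in a countable dense subset (or all of $X$ — one can discretize at the end). Actually, following Howroyd, I would set up the problem as a linear programming duality: we want a measure $\mu$ supported on $K$ with $\mu(B_n(x,\epsilon)) \leq \frac{1}{c} e^{-ns}$ for all $x,n$, and the constraint $\W^s_{N,\epsilon}(K)=c$ says precisely that the dual (covering) problem has value $\geq c$. (2) Make this rigorous via a compactness/limiting argument: for each finite ``budget'' approximation, use the finite-dimensional minimax theorem (or a direct greedy construction as in Mattila's proof of Theorem 8.17) to produce a finitely supported probability measure $\mu_m$ on $K$ satisfying the ball bounds up to an error $1/m$; these live in the weak-$*$ compact space $M(X)$, so extract a convergent subsequence $\mu_m \to \mu$. (3) Check that the limit measure $\mu$ is supported on $K$ (since $K$ is compact, the supports are closed subsets of $K$ and the limit support stays in $K$) and that the ball inequalities pass to the limit. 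Here one must be slightly careful: $\mu(B_n(x,\epsilon))$ is not weak-$*$ continuous since $B_n(x,\epsilon)$ is open, but one has $\mu(B_n(x,\epsilon)) \leq \liminf_m \mu_m(\overline{B}_n(x,\epsilon))$ is the wrong direction — instead use that for open $U$, $\mu(U) \leq \liminf_m \mu_m(U)$, so the upper bounds on $\mu_m(B_n(x,\epsilon))$ transfer directly to $\mu(B_n(x,\epsilon))$. (4) Conclude $\mu(K)=1$ and the stated bound with constant $1/c$.

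The main obstacle I expect is step (2): producing the finitely-supported near-extremal measures and showing their ball-masses are controlled by $\frac{1}{c}e^{-ns}$ up to small error. This is where the hypothesis $\W^s_{N,\epsilon}(K) = c$ must be used in full force, via a separating-hyperplane or minimax argument on a finite-dimensional polytope of measures, with the weighted covering functional $\sum c_i e^{-n_i s}$ playing the role of the ``cost'' on the dual side. Concretely: if no such near-extremal $\mu_m$ existed, then by finite-dimensional LP duality there would be an admissible weighted family witnessing $\W^s_{N,\epsilon}(K) < c$, a contradiction. One has to set this up carefully because $X$ is not finite — the resolution is to restrict attention to a fixed finite open cover of $K$ by Bowen balls at each stage (possible by compactness of $K$), run the finite argument there, and then let the cover refine; the monotonicity of $\W^s_{N,\epsilon}$ and the weak-$*$ compactness of $M(X)$ glue the stages together. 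The rest — support containment, semicontinuity of mass on open balls, normalization — is routine. I would model the write-up closely on the proof of Mattila's Theorem 8.17, substituting the Bowen metric $d_n$ for the Euclidean metric and the exponential weights $e^{-ns}$ for the power weights $r^s$ throughout, and invoking the $5r$-covering Lemma~\ref{lem-2.1} in the metric $d_n$ exactly as in the proof of Proposition~\ref{pro-2.1}.
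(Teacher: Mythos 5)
Your proposal identifies the right circle of ideas (Frostman, Howroyd, Mattila Thm.\ 8.17) and handles the peripheral issues correctly --- in particular your Step~(3) correctly notes that $\mu(U)\le\liminf_m\mu_m(U)$ for open $U$ is the inequality that passes upper bounds on ball masses to the weak-$*$ limit. But the route you pursue (finite LP/minimax approximations $\mu_m$ on finite pieces, glued by weak-$*$ compactness) is not the route the paper takes, and your Step~(2), which you correctly flag as the crux, has a real gap. The paper's proof is a \emph{one-shot} Hahn--Banach + Riesz argument, with no approximating sequence at all: define $p(f)=(1/c)\,\W^s_{N,\epsilon}(\chi_K\cdot f)$ on $C(X)$, verify $p$ is sublinear with $p({\bf 1})=1$, $0\le p(f)\le\|f\|_\infty$, $p(f)=0$ for $f\le0$; extend the linear map $t\mapsto tp({\bf 1})$ from the constants to a linear $L\le p$ on all of $C(X)$ by Hahn--Banach; observe $L$ is positive with $L({\bf 1})=1$ and apply Riesz to get $\mu\in M(X)$. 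Then $\mu(K)=1$ and $\mu(B_n(x,\epsilon))\le c^{-1}e^{-ns}$ fall out by testing against Urysohn functions: for the latter, if $0\le f\le 1$, $f=1$ on a compact $E\subset B_n(x,\epsilon)$, $f=0$ off $B_n(x,\epsilon)$, then $f\chi_K\le\chi_{B_n(x,\epsilon)}$ gives $\W^s_{N,\epsilon}(f\chi_K)\le e^{-ns}$, hence $\mu(E)\le L(f)\le p(f)\le c^{-1}e^{-ns}$. This is precisely how the functional $p$ encodes \emph{all} covers of all of $K$ simultaneously, so no discretization is needed. (As a small correction: Mattila's Thm.\ 8.17 / Howroyd's argument \emph{is} this Hahn--Banach proof, not a greedy construction; the greedy dyadic version of Frostman's lemma relies on a nested net structure that the Bowen metrics $d_n$ do not have.)

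The gap in your Step~(2): you claim that if no near-extremal finitely-supported $\mu_m$ exists, finite-dimensional LP duality yields a weighted family witnessing $\W^s_{N,\epsilon}(K)<c$. But the finite LP you would write down (variables: masses on a finite $F\subset K$; constraints: finitely many balls $B_{n_j}(x_j,\epsilon)$) has as its dual a weighted covering problem for the finite set $F$, \emph{not} for $K$. A cheap weighted cover of a finite subset $F$ does not witness $\W^s_{N,\epsilon}(K)<c$, because $\W^s_{N,\epsilon}$ of a finite set can be arbitrarily small, and the constraint $\sum_i c_i\chi_{B_i}\ge\chi_K$ asks that \emph{every} point of $K$ be loaded with total weight $\ge 1$. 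To repair this you would have to couple the choice of $F$, the constraint balls, and the ball radii carefully (e.g.\ take $F$ an $\epsilon'$-net of $K$ in $d_M$ and work with balls of radius $\epsilon-\epsilon'$ so that any cover of $F$ at the larger radius covers $K$ at the smaller one), and then track the shrinking radii through the weak-$*$ limit; this is doable but is exactly the epsilon management the Hahn--Banach argument eliminates, because $\W^s_{N,\epsilon}(\chi_K\cdot f)$ already refers to $K$ itself rather than to any finite approximation.
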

\begin{proof}  Clearly $c<\infty$. We define a function $p$ on the space $C(X)$ of continuous real-valued functions on $X$ by
\begin{equation*}
\label{zx-eq}
p(f)=(1/c)\W^s_{N,\epsilon}(\chi_K\cdot f),
\end{equation*}
where $\W^s_{N,\epsilon}$ is defined as in \eqref{e-wei}.

Let ${\bf 1}\in C(X)$ denote the constant function ${\bf 1}(x)\equiv 1$. It is easy to verify that

\begin{enumerate}

\item $p(f+g)\le p(f)+p(g)$ for any $f,g\in C(X)$.

\item $p(tf)=tp(f)$ for any $t\ge 0$ and $f\in C(X)$.

\item $p({\bf 1})=1$, $0\leq p(f)\leq \|f\|_\infty$ for any $f\in C(X)$,  and
$p(g)=0$ for $g\in C(X)$ with $g\le 0$.
\end{enumerate}
By
the Hahn-Banach theorem, we can extend  the linear functional $t\mapsto t p({\bf 1})$, $t\in \R$, from the subspace of the constant functions to a linear functional $L:\;
C(X)\to \R$ satisfying
$$L({\bf 1})=p({\bf 1})=1 \text{ and }-p(-f)\le L(f)\le p(f)    \text{ for any }f\in C(X).$$
If $f\in C(X)$ with $f\ge 0$, then $p(-f)=0$ and so $L(f)\ge 0$.
Hence combining the fact $L({\bf 1})=1$, we can use the Riesz
representation theorem to find a Borel probability measure $\mu$ on
$X$ such that $L(f)=\int f d\mu$ for $f\in C(X)$.

Now we show that $\mu(K)=1$. To see this,  for any compact set
$E\subseteq X\backslash K$, by the Uryson lemma there is $f\in C(X)$ such that
$0\leq f\leq 1$, $f(x)=1$ for $x\in E$ and $f(x)=0$ for $x\in K$.
Then $f\cdot \chi_K\equiv 0$ and thus  $p(f)=0$. Hence $\mu(E)\le L(f)\le p(f)=0$.
This shows $\mu(X\backslash K)=0$, i.e. $\mu(K)=1$.

In the end, we show that $\mu(B_n(x,\epsilon))\leq (1/c) e^{-ns}$ for any $x\in X$ and $n\geq N$. To see this, for any compact set $E\subset B_n(x,\epsilon)$,   by the Uryson lemma,  there exists $f\in C(X)$ such that $0
\le f\le 1$, $f(y)=1$ for $y\in E$ and $f(y)=0$ for $y\in X\backslash B_n(x,\epsilon)$.
Then $\mu(E)\le L(f)\le p(f)$. Since $f\cdot \chi_K\leq \chi_{B_n(x,\epsilon)}$ and $n\geq N$, we have $\W^s_{N,\epsilon}(\chi_K\cdot f)\leq e^{-ns}$ and thus
$p(f)\le \frac{1}{c} e^{-sn}$. Therefore $\mu(E)\le \frac{1}{c}e^{-ns}$. It follows that
$$
\mu(B_n(x,\epsilon))=\sup\{ \mu(E):\; E \mbox{ is a compact subset of } B_n(x,\epsilon)\}\leq  \frac{1}{c} e^{-sn}.$$
\end{proof}

\begin{rem}
\label{rem-MaWe}
{\rm There is a related known result (see, e.g. \cite{MaWe08, Shu06}) that,   for any Borel set $E\subset X$ and any Borel probability measure $\mu$ on $E$,  if $\underline{h}_\mu(T,x)\leq s$ for all $x\in E$, then $\htop^B(T,E)\leq s$; conversely if $\underline{h}_\mu(T,x)\geq s$ for all $x\in E$, then $\htop^B(T,E)\geq s$,
where
$
\underline{h}_\mu(T,x)
$ is defined as in Sect.~\ref{S-1}.}
\end{rem}

Now we are ready to prove Theorem \ref{thm-1.1}(i).
\begin{proof}[Proof of Theorem \ref{thm-1.1}(i)]  We first  show that  $\htop^B(T, K)\ge {\underline h}_\mu(T)$ for any $\mu\in M(X)$ with $\mu(K)=1$.
Let $\mu$ be a given such measure.  Write
$$
{\underline h}_\mu(T,x,\epsilon)=\liminf_{n\to \infty}-\frac{1}{n}\log \mu(B_n(x,\epsilon))
$$
for $x\in X, n\in \N$ and $\epsilon>0$.
Clearly $\underline{h}_\mu(T,x,\epsilon)$ is nonnegative and  increases as $\epsilon$ decreases. Hence by the monotone convergence theorem,
 $$\lim_{\epsilon\rightarrow 0} \int \underline{h}_\mu(T,x,\epsilon)
d\mu= \int \underline{h}_\mu(T,x)
d\mu=h_\mu(T).$$ Thus to show  $\htop^B(T, K)\ge \underline{h}_\mu(T)$, it is sufficient to show
 $\htop^B(T, K)\ge \int
\underline{h}_\mu(T,x,\epsilon) d\mu$ for each $\epsilon>0$.

Fix  $\epsilon>0$ and $\ell\in \mathbb{N}$.  Denote  $u_\ell=\min
\{\ell,\int \underline{h}_\mu(T,x,\epsilon)
d\mu(x)-\frac{1}{\ell}\}$. Then there exist a Borel set
$A_\ell\subset X$ with $\mu(A_\ell)>0$ and $N\in \mathbb{N}$ such
that
\begin{equation}
\label{e-ball}
 \mu(B_n(x,\epsilon))\le e^{-nu_\ell}, \quad \forall \; x\in A_\ell,\; n\ge N.
 \end{equation}
Now let  $\{B_{n_i}(x_i,\epsilon/2)\}$ be a countable or finite family so that $x_i\in X$, $n_i\geq N$ and $\bigcup_{i}B_{n_i}(x_i,\epsilon/2)\supset K\cap A_\ell$.
We may assume that for each $i$, $B_{n_i}(x_i,\epsilon/2)\cap (K\cap A_\ell)\neq \emptyset$, and choose $y_i\in B_{n_i}(x_i,\epsilon/2)\cap (K\cap A_\ell)$.
Then by \eqref{e-ball},
\begin{eqnarray*}
\sum_{i}e^{-n_i u_\ell}&\geq& \sum_{i} \mu(B_{n_i}(y_i,\epsilon))\geq \sum_{i} \mu(B_{n_i}(x_i,\epsilon/2))\\
&\geq& \mu(K\cap A_\ell)=\mu(A_\ell)>0.
\end{eqnarray*}
It follows that $\M^{u_\ell}(K)\geq \M^{u_\ell}_{N,\epsilon/2}(K)\geq \M^{u_\ell}_{N,\epsilon/2}(K\cap A_\ell)\geq \mu(A_\ell)$. Therefore
$\htop^B(T, K)\geq u_\ell$. Letting $\ell\to \infty$, we have
 the desired inequality  $\htop^B(T, K)\geq \int\underline{h}_\mu(T,x,\epsilon)
d\mu$. Hence $\htop^B(T, K)\geq \underline{h}_\mu(T)$.

 We next show that $\htop^B(T, K)\le \sup\{\underline{h}_\mu(T): \mu\in
M(X), \mu(K)=1\}$.  We can assume that $\htop^B(T, K)>0$, otherwise we have nothing to prove. By Proposition \ref{pro-2.1},
$\htop^{BW}(T, K)=\htop^B(T, K)$. Let $0<s<\htop^B(T, K)$. Then there exist $\epsilon>0$ and $N\in \N$ such that $c:=\W^s_{N,\epsilon}(K)>0$.
By Proposition \ref{pro-3.1},  there exists $\mu\in M(X)$ with $\mu (K)=1$ such that
$\mu(B_n(x,\epsilon))\le \frac{1}{c} e^{-sn}$ for any
$x\in X$ and $n\geq N$. Clearly $\underline{h}_\mu(T,x)\geq \underline{h}_\mu(T,x,\epsilon)\geq s$ for each $x\in X$ and hence $\underline{h}_\mu(T)\geq  \int \underline{h}_\mu(T,x) d\mu(x)\geq s$.
This finishes the proof of Theorem \ref{thm-1.1}(i).
\end{proof}

\subsection{The proof of Theorem \ref{thm-1.1}(ii)}

To prove Theorem \ref{thm-1.1}(ii), we first prove  the following.

\begin{thm}
\label{thm-6.1}
Let $(X,T)$ be a TDS. Assume that $X$ is zero-dimensional, i.e., for any $\delta>0$, $X$ has a closed-open partition with diameter less than $\delta$.
Then for any analytic set $Z\subset X$,
$$\htop(T,Z)=\sup\{\htop(T,K):\; K\subset Z,\; K \mbox{ is compact}\}.$$
\end{thm}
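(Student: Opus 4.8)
The plan splits the statement into its easy and hard halves. The inequality $\htop^B(T,Z)\ge\sup\{\htop^B(T,K):K\subseteq Z\ \text{compact}\}$ is immediate from the monotonicity of $\htop^B(T,\cdot)$. For the reverse inequality it suffices to prove that \emph{for every $s$ with $0<s<\htop^B(T,Z)$ there is a compact $K\subseteq Z$ with $\htop^B(T,K)\ge s$} (then take the supremum over $K$ and let $s\uparrow\htop^B(T,Z)$). Fix such an $s$. Because $X$ is zero-dimensional and $\htop^B(T,Z)=\sup_{\mathcal U}\htop^B(T,\mathcal U,Z)$ (see \eqref{e-e1}), we may choose a \emph{finite clopen partition} $\mathcal U$ of $X$ with $\htop^B(T,\mathcal U,Z)>s$; by the jump property $\M^s(\mathcal U,Z)=\infty$, so $\M^s_{N}(\mathcal U,Z)>0$ for all large $N$. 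I would then aim to produce a compact $K\subseteq Z$ with $\M^s_{N}(\mathcal U,K)>0$ for one such $N$: this forces $\M^s(\mathcal U,K)>0$, hence $\htop^B(T,\mathcal U,K)\ge s$ and a fortiori $\htop^B(T,K)\ge s$. The reason for insisting on a clopen \emph{partition} rather than an arbitrary open cover is that the cylinders $X(\bu)$, $\bu\in\W_n(\mathcal U)$, are then open \emph{and} closed and, for fixed $n$, partition $X$; so $\M^s_N(\mathcal U,\cdot)$ is a finite ``net'' outer measure whose covering sets form a countable nested family of clopen sets. Two features of this are used below: the cylinders being open lets compactness turn covers of $K$ into covers of larger sets, and the net (Moran) structure is what allows the mass of an analytic set to be traced through a Souslin scheme.

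To build $K$ I would carry out a Souslin (Cantor) scheme in the spirit of the Besicovitch--Davies inner-regularity theorem and Howroyd's refinement of it (cf. \cite{How95}, \cite[\S8]{Mat95}). First realize $Z$ by the Souslin operation on a regular scheme of closed sets: $Z=\bigcup_{\omega\in\mathcal N}\bigcap_{n\ge 1}F_{\omega|_n}$, where $\mathcal N=\N^{\N}$, $F_{\sigma a}\subseteq F_\sigma$, and $\mathrm{diam}(F_\sigma)\le 2^{-|\sigma|}$ for $\sigma\in\N^{<\N}$. Put $Z_\sigma:=Z\cap\bigcup_{\omega\supseteq\sigma}\bigcap_n F_{\omega|_n}$, so that $Z_\emptyset=Z$, $Z_\sigma=\bigcup_a Z_{\sigma a}$, $Z_\sigma\subseteq F_\sigma$, and --- the point of shrinking the diameters --- $\bigcap_n\overline{Z_{\omega|_n}}\subseteq\bigcap_n F_{\omega|_n}\subseteq Z$ for every $\omega$. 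Now construct inductively a finitely branching subtree $S\subseteq\N^{<\N}$ so that a definite amount of net $s$-content survives along $S$: entering a node $\sigma\in S$ with $\M^s_N(\mathcal U,\overline{Z_\sigma})$ bounded below, one has $Z_\sigma=\bigcup_a Z_{\sigma a}$ and, by a Davies-type splitting of the net cover (using countable subadditivity of $\M^s_N(\mathcal U,\cdot)$ and the consistency of the clopen cylinders under refinement), selects finitely many children $\sigma a_1,\dots,\sigma a_k$ that retain all but a summable fraction of the content. Setting $K:=\bigcap_n\bigcup_{\sigma\in S,\,|\sigma|=n}\overline{Z_\sigma}$: the sets $L_n:=\bigcup_{\sigma\in S,\,|\sigma|=n}\overline{Z_\sigma}$ are compact and decreasing, $K$ is nonempty compact (K\"onig's lemma) and $K\subseteq Z$ by the displayed property, and the construction guarantees $\M^s_N(\mathcal U,L_n)\ge c$ for a fixed $c>0$ and every $n$. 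Finally $\M^s_N(\mathcal U,K)\ge c$: any countable cover of $K$ by cylinders $X(\bu_i)$ with $|\bu_i|\ge N$ is a cover by open sets, so by compactness of the $L_n$ it already covers some $L_{n_0}$, whence its weight is $\ge\M^s_N(\mathcal U,L_{n_0})\ge c$. This gives $\htop^B(T,K)\ge s$, as required. (Equivalently one may distribute a probability measure $\mu$ along the tree, obtaining $\mu$ supported on $K$ with $\underline{h}_\mu(T,x)\ge s$ for every $x\in K$, and conclude via Remark \ref{rem-MaWe} or Theorem \ref{thm-1.1}(i).)

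The main obstacle is the inductive ``mass--preservation'' step, which carries the whole content of the Besicovitch--Davies argument: one must choose, at every node simultaneously, finitely many children whose closures both shrink in diameter (to keep the limit set inside $Z$ rather than $\overline Z$) and carry almost all of the net $s$-content of the parent, in such a way that the small losses incurred at the uncountably many nodes still leave a positive total at the end. The subtlety is that $\M^s_N(\mathcal U,\cdot)$ is only a finite outer measure and the sets $Z_\sigma$ are merely analytic, so one cannot simply invoke continuity from below of a genuine measure; it is exactly here that the net structure afforded by the zero-dimensionality is indispensable, letting the cover-weights be controlled by a direct combinatorial argument on the tree of clopen cylinders, after which the compactness of the $L_n$ delivers the lower bound for $K$.
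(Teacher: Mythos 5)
Your plan is structurally the same as the paper's: fix $0<s<\htop^B(T,Z)$, pick a finite clopen partition $\mathcal U$ with $\M^s_N(\mathcal U,Z)>0$, run a Besicovitch--Davies/Howroyd scheme along a Souslin tree for $Z$, and extract a compact $K\subseteq Z$ with $\M^s_N(\mathcal U,K)>0$ by a compactness argument on the sets $L_n$ (which is the paper's Cantor-diagonal step). You have also correctly located where the work lies: the inductive mass-preservation step at each node, and you rightly observe that zero-dimensionality, via the net structure of the clopen cylinders, is what makes it possible.

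But that step \emph{is} the proof, and your proposal does not supply it. Saying that ``countable subadditivity of $\M^s_N(\mathcal U,\cdot)$ and the consistency of the clopen cylinders under refinement'' lets you select finitely many children retaining most of the content is not an argument: an outer measure can be countably subadditive and still fail continuity from below, which is exactly what you need, since $\bigcup_{a\le k}Z_{\sigma a}\uparrow Z_\sigma$ and you want $\M^s_N(\mathcal U,\bigcup_{a\le k}Z_{\sigma a})\to\M^s_N(\mathcal U,Z_\sigma)$. The paper isolates precisely this statement as an ``increasing sets lemma'' (Proposition~\ref{pro-5.2}(i)) and proves it by a Carleson/Falconer-type combinatorial argument: take near-optimal pairwise-disjoint net covers $\Lambda_i$ of $E_i$ (disjointness comes free from the net property), for each $x\in E$ select the \emph{shortest} cylinder in $\bigcup_i\Lambda_i$ containing $x$, then for a fixed level $k$ compare the resulting disjoint family with $\Lambda_k$ bucket by bucket (cylinders drawn from $\Lambda_1$, from $\Lambda_2$ but not $\Lambda_1$, etc.), using the net property to control double counting. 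Once that lemma is proved, everything else you describe --- the Souslin scheme, the finitely branching subtree $S$, compactness of $L_n$, K\"onig's lemma, $\bigcap_n\overline{Z_{\omega|_n}}\subseteq Z$, and the fact that a countable net cover of $K$ is an open cover and so covers some $L_{n_0}$ --- goes through and reproduces the paper's Proposition~\ref{pro-5.2}(ii). So the route is right, but the increasing sets lemma must actually be established; as written, your outline treats the heart of the argument as a black box.
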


The following proposition is needed for the proof of Theorem \ref{thm-6.1}.

\begin{pro}
\label{pro-5.2}
Assume $\U$ is a closed-open partition of $X$. Let $N\in \N$. Then
\begin{itemize}
\item[(i)] If $E_i\uparrow E$, i.e., $E_{i+1}\supseteq E_i$ and $\bigcup_iE_i=E$,  then $$\M^s_N(\U, E)=\lim_{i\to \infty}\M^s_N(\U, E_i).$$

\item[(ii)] Assume $Z\subset X$ is analytic. Then
$$
\M^s_N(\U, Z)=\sup\{\M^s_N(\U, K):\; K\subset Z,\; K \mbox{ is compact}\}.
$$
\end{itemize}
\end{pro}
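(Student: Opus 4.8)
\textbf{Proof proposal for Proposition \ref{pro-5.2}.}

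The plan is to establish part (i) first, since it is essentially a continuity-from-below statement for the outer measure $\M^s_N(\U,\cdot)$, and then bootstrap to part (ii) via the standard Choquet-capacitability machinery for analytic sets. For part (i), one inequality is trivial from monotonicity (Proposition 2.1(i)): $\M^s_N(\U,E)\ge\lim_i\M^s_N(\U,E_i)$. For the reverse, I would exploit the crucial feature that $\U$ is a \emph{closed-open partition}: the sets $X(\bu)$ for $\bu\in\W_n(\U)$ are then clopen, and any $\Lambda\subset\bigcup_{j\ge N}\W_j(\U)$ that covers $E$ gives an open (indeed clopen) cover, so by \eqref{e-open} it suffices to work with open neighborhoods. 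More importantly, the quantity $\M^s_N(\U,\cdot)$ behaves like a measure on the algebra generated by the cylinders: I expect to show that if $G$ is open and $\M^s_N(\U,G)<\infty$, then a covering family $\Lambda$ achieving the infimum up to $\eta$ can be chosen so that each $X(\bu)$, $\bu\in\Lambda$, meets $E$, and then the clopen sets $X(\bu)$ that are actually used in covering $E$ already cover some $E_i$ for $i$ large (this is where compactness-type or a direct combinatorial argument on the tree of cylinders enters, using that the relevant subfamily can be taken finite when $E$ is covered with total weight close to $\M^s_N(\U,E)$, as $e^{-sm(\bu)}$ is bounded below only for bounded lengths — so one truncates the lengths at the cost of $\eta$). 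Carefully: pick $\Lambda$ covering $E$ with $\sum_{\bu\in\Lambda}e^{-sm(\bu)}\le\M^s_N(\U,E)+\eta$; discard all $\bu$ with $X(\bu)\cap E=\emptyset$; the remaining $X(\bu)$ still cover $E$; now each $x\in E$ lies in $E_i$ for some $i$ and in some $X(\bu)$, $\bu\in\Lambda$, and since $\Lambda$ is countable one can—after noting the lengths may be truncated to a finite range $[N,M]$ with tail weight $\le\eta$, leaving finitely many distinct cylinders—conclude $\bigcup_{\bu\in\Lambda'}X(\bu)\supseteq E_i$ for $i$ large, where $\Lambda'$ is that finite sub-collection of bounded-length words. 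This yields $\M^s_N(\U,E_i)\le\M^s_N(\U,E)+2\eta$.

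For part (ii), I would apply Choquet's capacitability theorem. The set function $K\mapsto\M^s_N(\U,K)$ on compact sets, extended to all subsets by $\M^s_N(\U,\cdot)$, should be shown to be a \emph{Choquet capacity}: it is monotone (Proposition 2.1(i)); it is continuous from below on arbitrary increasing sequences, which is exactly part (i) of this proposition; and it is continuous from above on decreasing sequences of compact sets, i.e. $\M^s_N(\U,\bigcap_jK_j)=\lim_j\M^s_N(\U,K_j)$ for $K_1\supseteq K_2\supseteq\cdots$ compact. This last property is where I would again use \eqref{e-open}: given $\eta>0$, choose an open $G\supseteq\bigcap_jK_j$ with $\M^s_N(\U,G)\le\M^s_N(\U,\bigcap_jK_j)+\eta$; by compactness $K_j\subseteq G$ for all large $j$, whence $\M^s_N(\U,K_j)\le\M^s_N(\U,G)$ for such $j$, giving the $\le$ direction (the $\ge$ direction is monotonicity). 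With all three capacity axioms verified, Choquet's theorem says every analytic set $Z$ is capacitable:
$$\M^s_N(\U,Z)=\sup\{\M^s_N(\U,K):\;K\subseteq Z,\;K\text{ compact}\},$$
which is precisely the assertion of (ii).

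The main obstacle I anticipate is the reverse inequality in part (i): one must convert an arbitrary countable covering family $\Lambda$ of $E$, which may use words of unbounded length, into a \emph{finite} sub-collection of clopen cylinders that already covers some $E_i$. The resolution hinges on two facts specific to this setting: first, that $e^{-sm(\bu)}\to 0$ as $m(\bu)\to\infty$, so the contribution of long words to the sum $\sum_{\bu\in\Lambda}e^{-sm(\bu)}$ is negligible and may be dropped up to $\eta$ (assuming $s>0$; the case $s=0$ must be handled separately but is easy since then $\M^0_N$ counts, up to the clopen structure, whether the set is covered at all by words of length $\ge N$, and the statement is immediate); and second, that after truncating to lengths in $[N,M]$ there are only finitely many distinct words $\bu$, hence finitely many distinct clopen sets $X(\bu)$, so the union of those meeting $E$ is clopen and its complement is a closed set disjoint from $E$—forcing, by $E=\bigcup E_i$ increasing, that it contains $E_i$ for $i$ large only if we additionally know the $E_i$ exhaust $E$ from within this finite union, which is automatic since every point of $E$ is in the union and in some $E_i$, and the union is a fixed finite clopen set. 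Once this truncation-and-finiteness step is in place, everything else is bookkeeping with the outer-measure properties already recorded in Proposition 2.1.
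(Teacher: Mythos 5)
Your treatment of part (ii) is sound in spirit: given (i), viewing $\M^s_N(\U,\cdot)$ as a Choquet capacity (monotone, continuous from below by (i), and continuous from above on decreasing compacts via outer regularity \eqref{e-open}) and invoking Choquet's capacitability theorem is a legitimate alternative to the paper's argument. The paper instead re-derives the capacitability conclusion by hand, using the representation of $Z$ as $\phi(\mathcal N)$ and the sets $Z_{n_1,\dots,n_p}$, which is essentially the proof of Choquet's theorem specialized to this setting; both routes use (i) at the same place, so the choice is a matter of whether you prefer to cite the abstract theorem or embed its proof.

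Part (i) is where the proposal breaks down, and the gap is structural rather than cosmetic. You start from a near-optimal countable cover $\Lambda$ of $E$, discard words missing $E$, then truncate lengths to a finite range $[N,M]$ at a cost of $\eta$ in weight, and conclude that the resulting finite family $\Lambda'$ covers $E_i$ for $i$ large, so $\M^s_N(\U,E_i)\le\M^s_N(\U,E)+2\eta$. Two problems. First, this final inequality is the \emph{trivial} direction: by monotonicity $\M^s_N(\U,E_i)\le\M^s_N(\U,E)$ already holds with no slack, so nothing has been proved towards the required $\M^s_N(\U,E)\le\lim_i\M^s_N(\U,E_i)$. Any manipulation that begins from a cover of $E$ can only produce upper bounds for $\M^s_N$ of subsets of $E$; to lower-bound $\M^s_N(\U,E)$ by the $\M^s_N(\U,E_i)$, one must go the other way and assemble a cover of $E$ out of (near-optimal) covers of the $E_i$. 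Second, the truncation itself is not valid as a covering operation: discarding the words of length exceeding $M$ removes a set of small \emph{weight}, but weight controls nothing about how much of $E$ those long words were covering, so the truncated family $\Lambda'$ need not cover any particular $E_i$, let alone $E_i$ for all large $i$. The clopen structure does not rescue this; the clopen union over $\Lambda'$ is a fixed set, and there is no reason for it to absorb $E_i$ once you have deleted infinitely many cylinders from a cover of $E$.

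What the paper actually does for (i) is the Carleson--Falconer ``increasing sets lemma'' argument, which is designed precisely to handle the direction you need. One takes near-optimal covers $\Lambda_i\subset\bigcup_{j\ge N}\W_j(\U)$ of each $E_i$ with excess $\delta_i$, exploits the net property of the cylinder families $\W_n(\U)$ (disjointness at each level, nesting across levels) to take the $\Lambda_i$ pairwise disjoint within each level, and then builds a single cover $\{\bu_x\}$ of $E$ by picking, for each $x\in E$, the shortest cylinder containing $x$ from $\bigcup_i\Lambda_i$. The delicate bookkeeping — comparing the sub-collections $\A_n$ of $\{\bu_x\}$ drawn from $\Lambda_n$ against matching sub-collections $\Lambda_{k,n}$ of a fixed $\Lambda_k$, and using the optimality of each $\Lambda_n$ as a cover of $E_n$ to avoid over-counting — is what makes the weight of $\{\bu_x\}$ bounded by $\lim_k\M^s_N(\U,E_k)+\sum_n\delta_n$. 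None of this is captured by your truncation idea, and there is no short-cut around it: continuity from below fails for general outer measures, and the net property of the clopen cylinder structure is the substantive ingredient. I would encourage you to redo (i) along those lines; your (ii) then goes through as you describe.
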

\begin{proof}
We first show that (i) implies (ii). Assume that (i) holds. Let $Z$ be analytic, i.e.,
  there exists a continuous surjective map $\phi:\;{\mathcal N}\to Z$.    Let $\Gamma_{n_1,n_2,\ldots,n_p}$ be the set of $(m_1,m_2,\ldots)\in {\mathcal N}$
such that $m_1\leq n_1$, $m_2\leq n_2$, $\ldots$, $m_p\leq n_p$ and let $Z_{n_1,\ldots, n_p}$ be the image of $\Gamma_{n_1,\ldots,n_p}$ under $\phi$.
Let $(\epsilon_p)$ be a sequence of positive numbers. Due to (i), we can pick a sequence $(n_p)$ of positive integers recursively so that $\M^s_N(\U, Z_{n_1})\geq \M^s_N(\U, Z)-\epsilon_1$ and
$$
\M^s_N(\U, Z_{n_1, \ldots, n_p})\geq \M^s_N(\U, Z_{n_1,\ldots, n_{p-1}})-\epsilon_p,\quad p=2,3,\ldots
$$
Hence $\M^s_N(\U, Z_{n_1, \ldots, n_p})\geq \M^s_N(\U, Z)-\sum_{i=1}^\infty\epsilon_i$ for any $p\in \N$.
Let
$$
K=\bigcap_{p=1}^\infty \overline{Z_{n_1,\ldots, n_p}}.$$
Since $\phi$ is continuous, we can show that $\bigcap_{p=1}^\infty \overline{Z_{n_1,\ldots, n_p}}=\bigcap_{p=1}^\infty Z_{n_1,\ldots, n_p}$ by applying Cantor's diagonal argument. Hence $K$ is a compact subset of $Z$. If $\Lambda \subset\bigcup_{j\geq
N}\W_j(\U)$ is a cover of $K$ (of course it is an open cover), then it is a cover of $\overline{Z_{n_1,\ldots, n_p}}$ when $p$ is large enough, which implies
$$\sum_{\bu\in \Lambda} e^{-sm(\bu)}\geq \lim_{p\to \infty}\M^s_N(\U, Z_{n_1, \ldots, n_p})\geq \M^s_N(\U, Z)-\sum_{i=1}^\infty\epsilon_i.$$
 Hence $\M^s_N(\U, K)\geq \M^s_N(\U, Z)-\sum_{i=1}^\infty\epsilon_i$. Since $\sum_{i=1}^\infty\epsilon_i$ can be chosen arbitrarily small, we prove (ii).

Now we turn to prove (i). Our argument is  modified from the classical proof of the ``increasing sets lemma'' for Hausdorff outer measures (cf. \cite[Sect. II]{Car67} and  \cite[Lemma 5.3]{Fal85}).
Note that any two non-empty elements in  $\W_n(\U)$ are disjoint,  and each element in $\W_{n+1}(\U)$ is a subset of some element in
$\W_n(\U)$. We call this the {\it net property } of $(\W_n(\U))$.

Let $E_i\uparrow E$ be given. Let $(\delta_i)$ be a sequence of positive numbers to be specified later and for each $i$,  choose a covering $\Lambda_i\subset \bigcup_{j\geq
N}\W_j(\U)$ of $E_i$ such that
\begin{equation}
\label{e-5.2}
\sum_{\bu\in \Lambda_i} e^{-sm(\bu)}\leq M^s_N(\U, E_i)+\delta_i.
\end{equation}
By the net property of
$(\W_n(\U))$, we may assume that for each $i$, the elements in $\Lambda_i$ are disjoint.

For any $x\in E$, choose $\bu_x\in \bigcup_{i=1}^\infty \Lambda_i$ containing $x$ such that $m(\bu_x)$ is the smallest. By the net property of
$(\W_n(\U))$, the collection $\{\bu_x:\; x\in E\}$ consists of  countable many disjoint elements.  Relabel these elements by $\bu_i$'s.  Clearly
$E\subset \bigcup_i\bu_i$.

We now choose an integer $k$.   Use $\A_1$ to denote the collection of those $\bu_i$'s  that are taken from $\Lambda_1$.  They cover a certain subset $Q_1$ of $E_k$.
The same subset is covered by a certain sub-collection of $\Lambda_k$, denoted as $\Lambda_{k,1}$. Since $\Lambda_{k,1}$ also covers the smaller set
$Q_1\cap E_1$, by \eqref{e-5.2},
\begin{equation}
\label{e-b1}
\sum_{\bu\in \A_1}e^{-sm(\bu)}\leq \sum_{\bu\in \Lambda_{k,1}}e^{-sm(\bu)}+\delta_1.
\end{equation}
To see this, assume that \eqref{e-b1} is false. Then by \eqref{e-5.2},
$$
\sum_{\bu\in (\Lambda_1\backslash \A_1)\cup \Lambda_{k,1}}e^{-sm(\bu)}< M^s_N(\U, E_1),
$$
which contradicts the fact that $(\Lambda_1\backslash \A_1)\cup \Lambda_{k,1}\subset \bigcup_{j\geq N}\W_j(\bu)$ is an open cover of $E_1$.
Next we use $\A_2$ to denote the collection of those $\bu_i$'s  that are taken from $\Lambda_2$ but not from $\Lambda_1$. Define $\Lambda_{k,2}$ similarly. As above, we find
\begin{equation}
\label{e-b2}
\sum_{\bu\in \A_2}e^{-sm(\bu)}\leq \sum_{\bu\in \Lambda_{k,2}}e^{-sm(\bu)}+\delta_2.
\end{equation}
We repeat the argument until all coverings $\Lambda_n$, $n\leq k$, have been considered. Note that
$\bigcup_{\bu\in\Lambda_{k,i}}\bu\subseteq \bigcup_{\bu\in \A_i}\bu$ for $i\leq k$. For different $i,i'\leq k$, the elements in $\Lambda_{k,i}$ are disjoint from those in  $\Lambda_{k,i'}$.
 The $k$ inequalities \eqref{e-b1}, \eqref{e-b2}, \ldots, are added which yields
$$\sum_{\bu\in \bigcup_{n=1}^k \A_n} e^{-sm(\bu)}\leq \sum_{\bu\in \bigcup_{n=1}^k\Lambda_{k,n}}e^{-sm(\bu)}+\sum_{n=1}^k\delta_n\leq \M^s_N(\U, E_k)+\sum_{n=1}^k\delta_n+\delta_k.$$
Letting $k\to \infty$, we have
$$\sum_{i}e^{-sm(\bu_i)}\leq \lim_{k\to \infty} \M^s_N(\U, E_k)+\sum_{n=1}^\infty\delta_n.$$
Since $\sum_{n=1}^\infty \delta_n$ can be chosen arbitrarily small we have
$$
\M^s_N(\U, E)\leq \lim_{k\to \infty} \M^s_N(\U, E_k).
$$
Since the opposite inequality is trivial we have proved (i).
\end{proof}

\begin{proof}[Proof of Theorem \ref{thm-6.1}] Let $Z$ be an analytic subset of $X$ with $\htop^B(T, Z)>0$. Let $0<s<\htop^B(T, Z)$. By \eqref{e-e1},
there exists a closed-open partition $\U$ so that $\htop^B(T,\U, Z)>s$
and thus $\M^s(\U,Z)=\infty$. Hence $\M_N^s(\U,Z)>0$ for some $N\in
\N$. By Proposition \ref{pro-5.2}, we can find a compact set
$K\subset Z$ such that  $\M_N^s(\U,K)>0$. It implies $\htop^B(T, K)\geq
\htop^B(T,\U, K)\geq  s$.
\end{proof}

Before we prove Theorem \ref{thm-1.1}(ii), we still need some notation and additional results.

Let us define the {\it natural extension}
$(\widetilde{X},\widetilde{T})$ of a TDS $(X,T)$ with a metric $d$
and a surjective map $T$ where $\widetilde{X}=\{ (x_1,x_2,\cdots):
T(x_{i+1})=x_i, x_i\in X, i\in \N \}$ is a subspace of the
product space $X^{\N}=\Pi_{i=1}^\infty X$ endowed with the
compatible metric $d_T$ as
$$d_T((x_1,x_2,\cdots),(y_1,y_2,\cdots))=\sum_{i=1}^\infty
\frac{d(x_i,y_i)}{2^i},$$ $\widetilde{T}:\widetilde{X}\rightarrow
\widetilde{X}$ is the shift homeomorphism with
$\widetilde{T}(x_1,x_2,\cdots)=(T(x_1),x_1,x_2,\cdots)$, and
$\pi_i:\widetilde{X}\rightarrow X$ is the projection to the $i$-th
coordinate. Clearly, $\pi_i:(\widetilde{X},\widetilde{T})\rightarrow
(X,T)$ is a factor map.
\begin{lem} \label{lem-nat} Let $(X,T)$ be a TDS with a metric
$d$ and a surjective map $T$, $(\widetilde{X},\widetilde{T})$ be the
natural extension of $(X,T)$ and $\pi_1:\widetilde{X}\rightarrow X$
be the projection to the first coordinate. Then $\sup_{x\in
X}\htop^{UC}(\widetilde{T},\pi_1^{-1}(x))=0$.
\end{lem}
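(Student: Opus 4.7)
The plan is to show that for each $x\in X$ the $n$-th Bowen metric $(d_T)_n$ associated to $\widetilde T$ coincides, on the fibre $\pi_1^{-1}(x)\times\pi_1^{-1}(x)$, with the ambient metric $d_T$. Since $\pi_1^{-1}(x)$ is closed in the compact space $\widetilde X$ (as $\pi_1$ is continuous) and hence compact, this collapse will force every $(n,\epsilon)$-separated set inside $\pi_1^{-1}(x)$ to be finite with cardinality bounded independently of $n$, which immediately yields $\htop^{UC}(\widetilde T,\pi_1^{-1}(x))=0$ for every $x$.

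First I would compute $\widetilde T^k$ coordinate-wise. Unwinding the definition of $\widetilde T$, one sees that for any $\tilde x=(x_1,x_2,\ldots)\in\widetilde X$,
$$
(\widetilde T^k\tilde x)_i=\begin{cases}T^{k-i+1}(x_1), & 1\le i\le k+1,\\ x_{i-k}, & i\ge k+2.\end{cases}
$$
Now fix $\tilde x,\tilde y\in\pi_1^{-1}(x)$, so $x_1=y_1=x$. The first $k+1$ coordinates of $\widetilde T^k\tilde x$ and $\widetilde T^k\tilde y$ depend only on $x$ and therefore cancel in the sum defining $d_T$. A reindexing $j=i-k$ then gives
$$
d_T\bigl(\widetilde T^k\tilde x,\widetilde T^k\tilde y\bigr)=\sum_{i=k+2}^{\infty}\frac{d(x_{i-k},y_{i-k})}{2^{i}}=2^{-k}\sum_{j=2}^{\infty}\frac{d(x_j,y_j)}{2^{j}}=2^{-k}\,d_T(\tilde x,\tilde y),
$$
using $d(x_1,y_1)=0$. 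Since $2^{-k}d_T(\tilde x,\tilde y)$ is maximised at $k=0$, I conclude
$$
(d_T)_n(\tilde x,\tilde y)=\max_{0\le k\le n-1}2^{-k}d_T(\tilde x,\tilde y)=d_T(\tilde x,\tilde y).
$$

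Consequently a set $E\subseteq\pi_1^{-1}(x)$ is $(n,\epsilon)$-separated for $\widetilde T$ if and only if it is $\epsilon$-separated in the compact metric space $(\pi_1^{-1}(x),d_T)$; in particular $r_n(\pi_1^{-1}(x),\epsilon)$ is bounded above by a constant depending only on $x$ and $\epsilon$, independently of $n$. Therefore
$$
\htop^{UC}\bigl(\widetilde T,\pi_1^{-1}(x)\bigr)=\lim_{\epsilon\to 0}\limsup_{n\to\infty}\frac{1}{n}\log r_n\bigl(\pi_1^{-1}(x),\epsilon\bigr)=0,
$$
and taking the supremum over $x\in X$ completes the proof.

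The only real observation to make is the geometric one that the right-shift structure of $\widetilde T$, combined with the geometric weights $2^{-i}$ in $d_T$, forces $\widetilde T$ to act as a $\tfrac{1}{2}$-contraction on each fibre of $\pi_1$; once that is noticed, the topological entropy bound is immediate from compactness of the fibre.
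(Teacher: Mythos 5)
Your proof is correct and rests on the same key observation as the paper's: if $\tilde x,\tilde y$ share the first coordinate $x$, then the first $k+1$ coordinates of $\widetilde T^k\tilde x$ and $\widetilde T^k\tilde y$ coincide. The paper then bounds the remaining tail of the series by $\sum_{j\ge N}\operatorname{diam}(X)/2^j<\epsilon$ and deduces that a fixed $(N,\epsilon)$-spanning set of the fibre stays $(n,\epsilon)$-spanning for all $n>N$. You instead compute the tail exactly and obtain the cleaner identity $d_T(\widetilde T^k\tilde x,\widetilde T^k\tilde y)=2^{-k}d_T(\tilde x,\tilde y)$, so that $(d_T)_n$ restricted to $\pi_1^{-1}(x)$ equals $d_T$ itself, and you switch to separated sets (which have cardinality bounded by a constant in a compact metric space, uniformly in $n$). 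Both routes are valid; yours buys a sharper structural fact — that $\widetilde T$ is a strict $\tfrac12$-contraction on each fibre of $\pi_1$ — at the cost of needing $x_1=y_1$ exactly (which holds here), whereas the paper's cruder tail bound is the kind of estimate that would survive if the fibre were only approximately constant in the first coordinate.
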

\begin{proof} Fix $x\in
X$. For any $\epsilon>0$, take $N\in \N$ large enough such that
$\sum_{i=N}^{\infty} \frac{{\rm diam}(X)}{2^i}<\epsilon$.

Let $E_N\subseteq \pi_1^{-1}(x)$ be a finite $(N,\epsilon)$-spanning
set of $\pi_1^{-1}(x)$. Next we are to show that $E_N$ is also a
$(n,\epsilon)$-spanning set of $\pi_1^{-1}(x)$ for $n>N$.

Fix $n\in \N$ with $n>N$. For any $\widetilde{y}\in
\pi_1^{-1}(x)$, since $E_N$ is a $(N,\epsilon)$-spanning set of
$\pi_1^{-1}(x)$ there exist $\widetilde{x}\in E_N$ such that
$d_T(\widetilde{T}^i \widetilde{x}, \widetilde{T}^i
\widetilde{y})<\epsilon$ for $i=0,1,\cdots,N-1$. Now for $k\in \{
N,N+1,\cdots,n-1 \}$, we have $\pi_j(\widetilde{T}^k
\widetilde{x})=\pi_j(\widetilde{T}^k \widetilde{y}))=T^{k-j+1}(x)$
for $j=1,\cdots, k,k+1$. Thus
\begin{eqnarray*}d_T(\widetilde{T}^k
\widetilde{x}, \widetilde{T}^k \widetilde{y})&=&\sum_{j=1}^\infty
\frac{d(\pi_j(\widetilde{T}^k \widetilde{x}),
\pi_j(\widetilde{T}^k\widetilde{x}))}{2^j}=\sum_{j=k+2}^{\infty}
\frac{d(\pi_j(\widetilde{T}^k \widetilde{x}),
\pi_j(\widetilde{T}^k\widetilde{x}))}{2^j}\\
&\le & \sum_{j=k+2}^{\infty} \frac{\mbox{diam}(X)}{2^j}\le
\sum_{j=N}^{\infty} \frac{\mbox{diam}(X)}{2^j}<\epsilon.
\end{eqnarray*}
This implies $(d_T)_n(\widetilde{x}, \widetilde{y})<\epsilon$. Hence
$E_N$ is also a $(n,\epsilon)$-spanning set of $\pi_1^{-1}(x)$ for
$n>N$. Let $\tilde{r}_n(\pi^{-1}(x),\epsilon)$ denote the smallest cardinality of $(n,\epsilon)$-spanning sets of $\pi^{-1}(x)$. Then
$\tilde{r}_n(\pi^{-1}(x),\epsilon)\leq \#(E_N)$. Hence
$$\htop^{UC}(\widetilde{T},\pi_1^{-1}(x))=\lim_{\epsilon\to 0}\limsup\limits_{n\rightarrow \infty} \frac{1}{n} \log \tilde{r}_n(\pi^{-1}(x),\epsilon)\le
\lim_{\epsilon\to 0}\limsup\limits_{n\rightarrow \infty} \frac{1}{n} \log \#(E_N)=0.$$
 This ends the proof of the lemma.
\end{proof}

In the following part we will lift general  TDSs having finite
topological entropy to zero dimensional TDSs by the so called {\it principal
extensions}.
\begin{de} \label{de-7-1} \cite{L} An extension
$\pi: (Z,R)\rightarrow (X, T)$ between two TDSs is a principal
extension if $h_\nu(R)=h_{\nu\circ \pi^{-1}}(T)$ for every $\nu \in M(Z,R)$.
\end{de}

The following general result is needed in our proof of Theorem \ref{thm-1.1}(ii).
\begin{pro}[Proposition 7.8 in \cite{BD}] \label{BD1} Every invertible TDS $(X, T)$ with
$h_{top}(T)<\infty$ has a zero dimensional principal extension
$(Z,R)$ with $R$ being invertible.
\end{pro}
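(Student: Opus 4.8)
The plan is to follow the argument of Boyle and Downarowicz. We must produce a zero-dimensional invertible TDS $(Z,R)$ together with a factor map $\pi\colon(Z,R)\to(X,T)$ that creates no entropy over any fibre, i.e. $h_\nu(R)=h_{\nu\circ\pi^{-1}}(T)$ for every $\nu\in M(Z,R)$. The reduction principle is the following: since $\pi$ is a factor map one always has $h_\nu(R)\ge h_{\nu\circ\pi^{-1}}(T)$, so principality amounts to the reverse inequality, and by the Ledrappier--Walters inequality the reverse inequality holds as soon as $\sup_{x}\htop^{UC}(R,\pi^{-1}(x))=0$ (in the spirit of Lemma~\ref{lem-nat}); equivalently, it suffices that $\pi$ be a measure-theoretic isomorphism over $\nu\circ\pi^{-1}$ for each $\nu$. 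Thus it is enough to build $Z$ out of a refining sequence of finite Borel partitions $\mathcal P_1\prec\mathcal P_2\prec\cdots$ of $X$ with $\mathrm{diam}(\mathcal P_k)\to0$ whose boundaries $\partial\mathcal P_k$ are \emph{universally null}, that is $\mu(\partial\mathcal P_k)=0$ for all $\mu\in M(X,T)$; the availability of such a sequence is the \emph{small boundary property}.

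Granting such partitions, I would construct $(Z,R,\pi)$ by coding. For each $k$ let $\phi_k\colon X\to\mathcal P_k^{\Z}$ send $x$ to its two-sided itinerary $(P_k(T^nx))_{n\in\Z}$, where $P_k(y)$ denotes the atom of $\mathcal P_k$ containing $y$, and put $\Phi=(\phi_k)_k\colon X\to\prod_k\mathcal P_k^{\Z}$. Let $Z=\overline{\Phi(X)}$, a closed subset of a countable product of discrete spaces, hence zero-dimensional, and let $R$ be the coordinatewise shift, which is a homeomorphism of $Z$ (here $T$ invertible gives $\Phi\circ T=R\circ\Phi$ and $\Phi(X)$ shift-invariant). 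Since $\mathrm{diam}(\mathcal P_k)\to0$ and the $\mathcal P_k$ refine, for every $\omega\in Z$ the closures of the $k$-th partition atoms read at coordinate $0$ form a nested sequence of non-empty compact sets of vanishing diameter, whose intersection is a single point $\pi(\omega)$; one checks that $\pi\colon Z\to X$ is continuous, onto, satisfies $\pi\circ R=T\circ\pi$ and $\pi\circ\Phi=\mathrm{id}_X$. Hence $\pi$ is a factor map between invertible TDSs with $(Z,R)$ zero-dimensional.

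Now principality. Put $D=\bigcup_{k\ge1}\bigcup_{n\in\Z}T^{-n}(\partial\mathcal P_k)$; this is a countable union of universally null sets, so $\mu(D)=0$ for every $\mu\in M(X,T)$. On $X\setminus D$ the itineraries are locally constant in each coordinate, which forces $\pi^{-1}(x)=\{\Phi(x)\}$, so $\pi$ is injective there. Consequently, for any $\nu\in M(Z,R)$ with $\mu:=\nu\circ\pi^{-1}$ we have $\nu(\pi^{-1}(X\setminus D))=1$ and $\pi$ is a Borel bijection mod $0$ from $(Z,\nu)$ onto $(X,\mu)$, hence a measure-theoretic isomorphism, whence $h_\nu(R)=h_\mu(T)$. (Alternatively, the fibres of $\pi$ carry zero topological entropy, and one applies the Ledrappier--Walters inequality directly.)

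The main obstacle is the very first step: a general TDS with $\htop(T)<\infty$ need not itself possess a refining sequence of partitions with universally null boundaries. One gets around this by first passing to an auxiliary principal extension that does enjoy the small boundary property. Concretely, take an odometer $(Y,S)$ --- a minimal, zero-entropy, zero-dimensional invertible system --- and consider $(X\times Y,\,T\times S)$: it factors onto $(X,T)$, this factor map is principal because each of its fibres $\{x\}\times Y$ has topological entropy at most $h(S)=0$ (so the argument of the previous paragraph, or the Ledrappier--Walters inequality, applies), and $\htop(T\times S)=\htop(T)<\infty$. Since $X\times Y$ is an extension of the infinite minimal system $(Y,S)$ it has the marker property, and by the results of Lindenstrauss and Lindenstrauss--Weiss a finite-entropy system with the marker property has the small boundary property. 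Applying the coding construction above to $(X\times Y,T\times S)$ and composing the resulting zero-dimensional principal extension with the projection onto $X$ produces the desired $(Z,R)$ --- composition of principal extensions being principal. Carrying this reduction out carefully is precisely the content of Proposition~7.8 in \cite{BD}.
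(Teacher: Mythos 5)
Your proposal is correct and follows essentially the route the paper itself sketches in Remark~\ref{rem-1} (for the more general $mdim(X,T)=0$ hypothesis): pass to the product with an aperiodic minimal zero-entropy system to gain the small boundary property via Lindenstrauss's theorem, code a refining sequence of small-boundary partitions into a zero-dimensional principal extension, and compose the two principal extensions. Your choice of an odometer in place of the irrational rotation, and of the marker property in place of the nonperiodic-minimal-factor hypothesis, is a cosmetic variation; the paper cites Proposition~\ref{BD1} from Boyle--Downarowicz without reproving it, and your spelled-out two-sided itinerary coding and the a.e.\ injectivity argument for principality supply exactly the details that Remark~\ref{rem-1} leaves as a ``standard construction.''
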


Let $\pi:(Y,S)\rightarrow (X,T)$ be a factor map between two TDSs.
Bowen proved that $\htop(S)\le \htop(T)+\sup_{x\in X}
\htop^{UC}(S,\pi^{-1}(x))$ (cf. \cite[Theorem 17]{Bow71}). In fact, Bowen's proof is also valid for the
following result (see, i.e.  Theorem 7.3 in \cite{HYZ-1} for a detailed proof).

\begin{thm} \label{thm-fe}Let $\pi: (X,T)\rightarrow (Y,S)$ be a
factor map between two TDSs. Then for any $E\subseteq X$ one has
\begin{equation}\label{eq-kkkk-1}
\htop^B(S,\pi(E))\le \htop^B(T,E)\le \htop^B(S,\pi(E))+\sup_{y\in
Y}\htop^{UC}(T,\pi^{-1}(y)).
\end{equation}
\end{thm}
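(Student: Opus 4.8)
The plan is to adapt Bowen's original factor-map argument for topological entropy (Theorem 17 in \cite{Bow71}) to Bowen's topological entropy of arbitrary subsets, using the covering/outer-measure formulation of $\htop^B$ rather than the spanning-set formulation. The left inequality $\htop^B(S,\pi(E))\le \htop^B(T,E)$ is the easy half: given a finite open cover $\V$ of $\pi(E)$ in $Y$, the pullback $\pi^{-1}\V=\{\pi^{-1}V:V\in\V\}$ is a finite open cover of $E$ in $X$, and since $\pi\circ T=S\circ\pi$ one checks $X(\bu)=\pi^{-1}(Y(\mathbf{V}))$ for corresponding strings, so $\M^s_N(\pi^{-1}\V,E)\le\M^s_N(\V,\pi(E))$ for every $s,N$; taking the critical value and then the supremum over covers $\V$ of $\pi(E)$ gives $\htop^B(S,\pi(E))\le\htop^B(T,E)$.

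For the right inequality, fix $a>\sup_{y\in Y}\htop^{UC}(T,\pi^{-1}(y))$; I would first show that this supremum being finite forces a \emph{uniform} spanning estimate. Precisely, for each $\epsilon>0$ and each $y\in Y$ there is $N(y)$ with $\tilde r_n(\pi^{-1}(y),\epsilon)\le e^{na}$ for all $n\ge N(y)$; by upper semicontinuity of the map $y\mapsto$ (suitable finite spanning number) on a small $S$-dynamical neighborhood of $y$, together with compactness of $Y$, one extracts a single $N$ and a finite open cover $\{W_1,\dots,W_r\}$ of $Y$ such that for every $j$ and every $n\ge N$, the set $\pi^{-1}(W_j)$ admits an $(n,\epsilon)$-spanning family in $X$ of cardinality at most $e^{na}$ — this is the standard ``tube lemma'' packaging of Bowen's argument. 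Then, given a finite open cover $\V$ of $Y$ refining $\{W_j\}$ and a covering family $\Lambda\subset\bigcup_{j\ge N}\W_j(\V)$ of $\pi(E)$, I replace each string $\bu$ in $\Lambda$ (of length $m(\bu)=n$) by at most $e^{na}$ Bowen balls $B_n(x,\epsilon)$ in $X$ whose union covers $\pi^{-1}(Y(\bu))\supseteq$ the corresponding piece of $E$; this produces a Bowen-ball covering of $E$ witnessing
\[
\M^{s+a}_{N,\epsilon}(E)\le\sum_{\bu\in\Lambda}e^{na}\cdot e^{-(s+a)n}=\sum_{\bu\in\Lambda}e^{-sm(\bu)}.
\]
Taking the infimum over $\Lambda$, then $N\to\infty$, then $\epsilon\to 0$ and using the Bowen-ball characterization of $\htop^B$ from Section~\ref{S-2.2}, I get $\htop^B(T,E)\le\htop^B(S,\pi(E),\V)+a$ for the relevant cover; letting $\V$ shrink and $a$ decrease to the supremum yields the claim.

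The main obstacle is the uniform spanning estimate in the middle paragraph: extracting from the pointwise bounds $\htop^{UC}(T,\pi^{-1}(y))\le a$ a \emph{single} scale $N$ and a \emph{finite} open cover of $Y$ over whose pieces the $(n,\epsilon)$-spanning numbers of the preimages are simultaneously controlled by $e^{na}$ for all large $n$. This requires the semicontinuity-plus-compactness argument of Bowen carefully carried through with the $d_n$ metrics and with the cover-language bookkeeping needed to match it to the outer measures $\M^s_N$. Since the statement explicitly grants that ``Bowen's proof is also valid'' here and cites a detailed proof in \cite{HYZ-1}, I would at this point either invoke that reference directly for the uniform estimate, or reproduce Bowen's tube-lemma step and then only supply the (routine) translation into the covering formulation described above.
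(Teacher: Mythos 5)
The paper does not itself prove Theorem~\ref{thm-fe}: it asserts that Bowen's argument for $E=X$ (\cite[Theorem 17]{Bow71}) carries over to subsets, and points to \cite[Theorem 7.3]{HYZ-1} for a detailed write-up. Your plan to adapt Bowen and ultimately defer the uniform step to those references is thus of a piece with what the paper does. Two steps in your sketch are nonetheless wrong as written.

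In the easy half, the inequality you derive points the wrong way for the conclusion you draw from it. From $\M^s_N(\pi^{-1}\V,E)\le\M^s_N(\V,\pi(E))$, taking critical values and then $\sup_\V$ only yields $\sup_\V\htop^B(T,\pi^{-1}\V,E)\le\htop^B(S,\pi(E))$; this upper-bounds a quantity that is already $\le\htop^B(T,E)$ and compares nothing. What is needed is the reverse: every covering $\Lambda$ of $E$ by strings $\bu$ from $\pi^{-1}\V$ projects, via $X(\bu)=\pi^{-1}(Y(\mathbf{V}))$, to a covering of $\pi(E)$ by $\V$-strings of the same cost, giving $\M^s_N(\V,\pi(E))\le\M^s_N(\pi^{-1}\V,E)$, hence $\htop^B(S,\V,\pi(E))\le\htop^B(T,\pi^{-1}\V,E)\le\htop^B(T,E)$, and $\sup_\V$ gives $\htop^B(S,\pi(E))\le\htop^B(T,E)$. (In fact the two set functions coincide, so the slip is easily repaired, but the chain as you wrote it does not close.)

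In the hard half, the uniform spanning estimate you formulate is false. You ask for a finite open cover $\{W_1,\dots,W_r\}$ of $Y$ and a single $N$ such that each $\pi^{-1}(W_j)$ has an $(n,\epsilon)$-spanning set of cardinality $\le e^{na}$ for every $n\ge N$. But $\pi^{-1}(W_j)$ is an open subset of $X$, and its $(n,\epsilon)$-spanning numbers grow on the order of $e^{n\htop(T)}$, not $e^{na}$ with $a$ near $\sup_y\htop^{UC}(T,\pi^{-1}(y))$; taking $\pi=\mathrm{id}$ on a full shift (so $a$ may be taken arbitrarily small) makes this evident. Bowen's actual estimate is weaker and subtler: for each $j$ the tube lemma gives one fixed length $m_j$ and one $(m_j,\epsilon)$-spanning set $F_j$ of $\pi^{-1}(W_j)$ with $|F_j|\le e^{m_j a}$, and the $(n,\epsilon)$-covering of $\pi^{-1}(Y(\mathbf{V}))$ for a long string $\mathbf{V}$ is built by concatenating blocks of lengths $m_{j_1},m_{j_2},\dots$ that follow the $S$-orbit of $\pi(x)$ through the cover $\{W_j\}$, with total count $e^{na}$ only up to a subexponential correction. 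Your ``routine translation'' into the $\M^s_{N,\epsilon}$ formulation must therefore be organized around this block construction; it cannot rest on the stated uniform estimate, because that estimate cannot be proved.
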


We also need the following variational principle of conditional entropies.

\begin{pro} \label{pro-fe} Let $\pi: (X,T)\rightarrow (Y,S)$ be a
factor map between two TDSs. Then we have
\begin{equation}\label{eq-kkkk-1}
\sup_{y\in
Y}\htop^{UC}(T,\pi^{-1}(y))=\sup_{\mu\in M(X,T)} (h_\mu(T)-h_{\mu\circ \pi^{-1}}(S)).
\end{equation}
\end{pro}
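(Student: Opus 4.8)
The plan is to prove the two inequalities separately. For the inequality $\sup_{y\in Y}\htop^{UC}(T,\pi^{-1}(y))\le \sup_{\mu\in M(X,T)}\bigl(h_\mu(T)-h_{\mu\circ\pi^{-1}}(S)\bigr)$, I would invoke the classical theory of conditional topological and measure-theoretic entropy. Recall that the right-hand side equals the topological conditional entropy $\htop(T\mid S)$ in the sense of Misiurewicz (the conditional variational principle, see \cite{LW} or \cite{Mis76}), and that Bowen's inequality $\htop(S)\le\htop(T)+\sup_{y}\htop^{UC}(T,\pi^{-1}(y))$ refines to an equality of the relevant conditional quantities. More concretely, I would fix $\mu\in M(X,T)$ realizing (up to $\eta>0$) the supremum on the right, choose a finite Borel partition $\alpha$ of $X$ with $h_\mu(T,\alpha)$ close to $h_\mu(T)$ and with small boundary, and pull back a partition $\beta$ from $Y$; then $h_\mu(T,\alpha\mid\beta)=h_\mu(T,\alpha)-h_{\mu\circ\pi^{-1}}(S,\beta')$ is close to the gap, and a Shannon--McMillan--Breiman / counting argument along a generic fibre of $\pi$ produces, for a suitable $y$, an exponentially large $(n,\epsilon)$-separated subset of $\pi^{-1}(y)$, giving $\htop^{UC}(T,\pi^{-1}(y))\ge h_\mu(T)-h_{\mu\circ\pi^{-1}}(S)-\eta$.

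For the reverse inequality $\sup_{y\in Y}\htop^{UC}(T,\pi^{-1}(y))\le\sup_{\mu\in M(X,T)}\bigl(h_\mu(T)-h_{\mu\circ\pi^{-1}}(S)\bigr)$, I would argue as follows. Put $a=\sup_{y}\htop^{UC}(T,\pi^{-1}(y))$; by Theorem \ref{thm-fe} applied with $E=X$ (so $\pi(E)=Y$) we get $\htop(T)\le\htop(S)+a$. The point is to upgrade this to the existence of a single invariant measure witnessing the gap. Here I would use the variational principle for $\htop$ on $X$ to pick $\mu_k\in M(X,T)$ with $h_{\mu_k}(T)\to\htop(T)$, pass to a weak-$*$ limit (using upper semicontinuity in a suitable sense, or working instead with the fibre entropy function $\mu\mapsto h_\mu(T)-h_{\mu\circ\pi^{-1}}(S)$, which is affine and upper semicontinuous when $\htop(T)<\infty$), and check that the limit measure $\mu$ satisfies $h_\mu(T)-h_{\mu\circ\pi^{-1}}(S)\ge\htop(T)-\htop(S)\ge a$. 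Combined with the first inequality this yields equality. An alternative, cleaner route is to cite directly the conditional variational principle of Ledrappier--Walters, which asserts precisely that the topological fibre entropy $\sup_y\htop^{UC}(T,\pi^{-1}(y))$ equals $\sup_{\mu\in M(X,T)}\bigl(h_\mu(T)-h_{\mu\circ\pi^{-1}}(S)\bigr)$; in that case the proof reduces to matching our definitions of $\htop^{UC}$ on fibres with theirs, which is routine given the remark in Sect.~\ref{S-2} that separated and spanning formulations coincide.

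The main obstacle I anticipate is the lower bound: converting a measure-theoretic entropy gap into a genuinely large separated set inside one concrete fibre $\pi^{-1}(y)$. This requires some care because $\mu$-generic points need not lie over a single $y$, so one must first disintegrate $\mu$ over $\mu\circ\pi^{-1}$, use the conditional SMB theorem to control $\mu_y$-measures of Bowen balls in the fibre for $\mu\circ\pi^{-1}$-a.e.\ $y$, and then extract the separated set by a standard packing argument in $(\pi^{-1}(y),d_n)$. Keeping the $\epsilon$'s uniform as the partition is refined, and handling the boundary of $\alpha$, is the delicate bookkeeping; everything else (the Bowen-type inequality, upper semicontinuity of fibre entropy when $\htop(T)<\infty$, and the affineness needed for the weak-$*$ limit) is standard and can be quoted.
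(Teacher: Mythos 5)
The paper disposes of this proposition in one line, citing the combination of \cite[Theorem 2.1]{LeWa77} (Ledrappier--Walters relativized variational principle) with \cite[Theorem 3]{DoSe02}; your ``alternative, cleaner route'' of quoting the Ledrappier--Walters conditional variational principle is therefore essentially the intended proof, modulo the fact that Ledrappier--Walters controls $\sup_{\nu}\int_Y\htop^{UC}(T,\pi^{-1}(y))\,d\nu(y)$ rather than $\sup_y\htop^{UC}(T,\pi^{-1}(y))$, and the passage between these two suprema is precisely what the Downarowicz--Serafin reference supplies.

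Your attempted self-contained argument, however, has a genuine gap in the direction
$\sup_{y}\htop^{UC}(T,\pi^{-1}(y))\le\sup_{\mu\in M(X,T)}\bigl(h_\mu(T)-h_{\mu\circ\pi^{-1}}(S)\bigr)$.
You set $a=\sup_y\htop^{UC}(T,\pi^{-1}(y))$, invoke Theorem~\ref{thm-fe} with $E=X$ to obtain $\htop(T)\le\htop(S)+a$, that is $\htop(T)-\htop(S)\le a$, and then conclude with
$h_\mu(T)-h_{\mu\circ\pi^{-1}}(S)\ge\htop(T)-\htop(S)\ge a$.
The last inequality is backwards: Bowen's inequality gives $\htop(T)-\htop(S)\le a$, not $\ge a$, and this gap can be strict --- the supremum of fiber entropies over individual fibers can exceed $\htop(T)-\htop(S)$. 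Consequently, picking $\mu_k$ with $h_{\mu_k}(T)\to\htop(T)$ and passing to a weak-$*$ limit only yields $\sup_\mu(h_\mu-h_{\mu\circ\pi^{-1}})\ge\htop(T)-\htop(S)$, which is not the quantity $a$ you need. A correct argument for this direction must begin with a large $(n,\epsilon)$-separated set \emph{inside one chosen fiber} $\pi^{-1}(y)$ and build from it an invariant measure whose \emph{conditional} entropy over $\pi^{-1}\mathcal{B}_Y$ is large; this is a genuinely conditional variational construction (it is what Downarowicz--Serafin and the ``inner'' half of the conditional variational principle provide), and it cannot be extracted from the global Bowen inequality alone. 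A smaller, cosmetic issue: your first paragraph is announced as proving ``$\le$'' but the argument sketched there ends with $\htop^{UC}(T,\pi^{-1}(y))\ge h_\mu(T)-h_{\mu\circ\pi^{-1}}(S)-\eta$, which is the opposite direction, so both paragraphs as written claim the same inequality.
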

\begin{proof}
It is  the direct combination of \cite[Theorem 3]{DoSe02} and \cite[Theorem 2.1]{LeWa77}.
\end{proof}

\begin{lem}\label{prop-zd-ex-e} Let $(X,T)$ be a TDS with $\htop(T)<\infty$.
Then there exists a factor map $\pi:(H,\Gamma)\rightarrow (X,T)$ such
that $(H,\Gamma)$ is zero dimensional and $$\sup_{x\in X}
\htop^{UC}(\Gamma,\pi^{-1}(x))=0.$$
\end{lem}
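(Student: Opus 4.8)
The plan is to combine the two extension constructions already available in the excerpt. First I would apply Proposition~\ref{BD1} to an invertible version of $(X,T)$; but since $T$ is not assumed invertible, the natural first move is to pass to the natural extension $(\widetilde X,\widetilde T)$. However, the natural extension requires $T$ to be surjective, so I would handle that by first restricting to $\bigcap_{n\ge 0}T^n X$ (or, more cleanly, note that entropy questions reduce to the surjective case) — actually the cleanest route is: form the natural extension $(\widetilde X,\widetilde T)$ with projection $\pi_1:\widetilde X\to X$ (a factor map onto $\overline{\bigcup}T^nX$; one checks $\htop(\widetilde T)=\htop(T)<\infty$ and that fibers have zero upper-capacity entropy by Lemma~\ref{lem-nat}). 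Then $\widetilde T$ is invertible, so Proposition~\ref{BD1} gives a zero-dimensional principal extension $\rho:(Z,R)\to(\widetilde X,\widetilde T)$ with $R$ invertible.

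Next I would set $\pi=\pi_1\circ\rho:(Z,R)\to(X,T)$ and take $(H,\Gamma)=(Z,R)$. This $(H,\Gamma)$ is zero-dimensional by construction, and $\pi$ is a factor map onto $X$ (using surjectivity of $\pi_1$ onto its image together with the standard reduction; if $T$ is not onto one replaces $X$ by $TX\cap T^2X\cap\cdots$ at the start, noting this does not change any of the entropy quantities we care about — I will spell this out). The remaining point is to show $\sup_{x\in X}\htop^{UC}(\Gamma,\pi^{-1}(x))=0$.

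For that last step I would use Proposition~\ref{pro-fe} twice, composing conditional entropies across the tower. Since $\rho$ is a \emph{principal} extension, $h_\nu(R)=h_{\nu\circ\rho^{-1}}(\widetilde T)$ for every $\nu\in M(Z,R)$, so by Proposition~\ref{pro-fe} applied to $\rho$ we get $\sup_{\tilde x}\htop^{UC}(R,\rho^{-1}(\tilde x))=\sup_\nu\bigl(h_\nu(R)-h_{\nu\circ\rho^{-1}}(\widetilde T)\bigr)=0$. Likewise, Lemma~\ref{lem-nat} gives $\sup_x\htop^{UC}(\widetilde T,\pi_1^{-1}(x))=0$, hence by Proposition~\ref{pro-fe} applied to $\pi_1$ we get $\sup_{\mu\in M(\widetilde X,\widetilde T)}\bigl(h_\mu(\widetilde T)-h_{\mu\circ\pi_1^{-1}}(T)\bigr)=0$, i.e. $\pi_1$ is also principal. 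Composition of principal extensions is principal (immediate from the defining identity $h_\nu(R)=h_{\nu\circ\rho^{-1}}(\widetilde T)=h_{\nu\circ\rho^{-1}\circ\pi_1^{-1}}(T)=h_{\nu\circ\pi^{-1}}(T)$), so $\pi$ is a principal extension, and one more application of Proposition~\ref{pro-fe} yields $\sup_{x\in X}\htop^{UC}(\Gamma,\pi^{-1}(x))=\sup_\nu\bigl(h_\nu(\Gamma)-h_{\nu\circ\pi^{-1}}(T)\bigr)=0$.

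The main obstacle is the bookkeeping around surjectivity: Proposition~\ref{BD1} and the natural-extension construction both presuppose surjectivity (resp.\ invertibility), so I need to argue carefully that restricting $T$ to its eventual image $X_\infty=\bigcap_{n\ge 1}T^nX$ changes neither $\htop^B(T,Z)$ for $Z\subseteq X_\infty$ nor the supremum over measures (every $\mu\in M(X,T)$ is supported on $X_\infty$), so that the reduction is harmless for the intended application in Theorem~\ref{thm-1.1}(ii). A secondary, purely technical point is verifying $\htop(\widetilde T)<\infty$ and $\htop(R)<\infty$ along the way so that Proposition~\ref{BD1} applies to $(\widetilde X,\widetilde T)$ — but this is immediate from Theorem~\ref{thm-fe} and Lemma~\ref{lem-nat} (the fibers contribute $0$), together with the fact that principal extensions preserve topological entropy.
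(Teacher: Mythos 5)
Your skeleton — natural extension, then Proposition~\ref{BD1} to get a zero-dimensional principal extension, then compose and invoke Proposition~\ref{pro-fe} (equivalently, ``principal $\iff$ fibers have zero $\htop^{UC}$'' plus ``composition of principals is principal'') — is the same as the paper's, and your reformulation of the final estimate via the principal-extension identity is a clean way to package the paper's chain of inequalities. The genuine gap is exactly the one you flag and then wave off: surjectivity. Forming the natural extension of $(X,T)$ directly gives a projection $\pi_1:\widetilde X\to X$ whose image is only $X_\infty=\bigcap_{n\ge1}T^nX$, so the resulting $\pi$ is a factor map onto $(X_\infty,T|_{X_\infty})$, not onto $(X,T)$ as the lemma demands. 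Your proposed fix — ``replace $X$ by $X_\infty$ at the start, this is harmless for the application'' — proves a weaker statement, and its harmlessness is itself nontrivial: for the application to Theorem~\ref{thm-1.1}(ii) you would need $\htop^B(T,Z)=\htop^B(T,Z\cap X_\infty)$ for an \emph{arbitrary} analytic $Z\subseteq X$, i.e.\ that the ``wandering'' part $Z\setminus X_\infty$ contributes nothing to Bowen entropy, and no argument for this is offered (the fact that every invariant measure sits on $X_\infty$ controls $\htop$ of $X$, not $\htop^B$ of an arbitrary non-invariant subset).

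The paper sidesteps all of this with an auxiliary construction you are missing. Set $D=\{1/n:n\in\N\}\cup\{0\}$, $Z=X\times D$, and define $R:Z\to Z$ by $R(x,\tfrac{1}{n+1})=(x,\tfrac1n)$, $R(x,1)=(Tx,1)$, $R(x,0)=(x,0)$. Then $R$ is \emph{surjective} (every $(x,\tfrac1n)$ has the preimage $(x,\tfrac1{n+1})$), the copy $X\cong X\times\{1\}$ is a closed forward-invariant subset with $R|_X=T$, and $\htop(R)=\htop(T)<\infty$. Now the natural extension $\pi_1:\widetilde Z\to Z$ is onto all of $Z$ — in particular $\pi_1^{-1}(x,1)\ne\emptyset$ for every $x\in X$, because $\bigl((x,1),(x,\tfrac12),(x,\tfrac13),\dots\bigr)$ is a backward orbit. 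Apply Proposition~\ref{BD1} to the invertible $(\widetilde Z,\widetilde R)$ to get $\psi:(W,G)\to(\widetilde Z,\widetilde R)$, set $H=\psi^{-1}(\pi_1^{-1}(X))$, $\Gamma=G|_H$, $\pi=\pi_1\circ\psi|_H$; this $\pi$ really is onto $X$, and the conditional-entropy bookkeeping you describe then closes the argument. Without the $X\times D$ device your proof only establishes the lemma under a surjectivity hypothesis that the statement does not grant you.
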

\begin{proof} First, we take $D=\{ \frac{1}{n}\}_{n\in \mathbb{N}}\cup \{ 0\}$ and let
$Z=X\times D$. Define $R:Z\rightarrow Z$ satisfying
$R(x,\frac{1}{n+1})=(x,\frac{1}{n})$, $n\in \mathbb{N}$;
$R(x,1)=(Tx,1)$ and $R(x,0)=(x,0)$ for $x\in X$. Then $(Z,R)$ is a
TDS and $R$ is surjective. If we identity $(x,1)$ with $x$ for each
$x\in X$, then $X$ can be viewed as a closed subset of $Z$ and
$R|_X=T$. It is also clear that
$\htop(R)=\htop(T)<\infty$.

Let $(\widetilde{Z},\widetilde{R})$ be the natural extension of
$(Z,R)$ and $\pi_1:\widetilde{Z}\rightarrow Z$ be the projection to
the first coordinate. Then
\begin{align}\label{u=l-eq2}
\sup_{z\in Z}\htop^{UC}(\widetilde{R},\pi_1^{-1}(z))=0
\end{align}
 by Lemma \ref{lem-nat}, and,
so $h_{\text{top}}(\widetilde{R})=h_{\text{top}}(R)<\infty$. Since
$\widetilde{R}$ is homeomorphism on $\widetilde{Z}$, by Lemma \ref{BD1}, there exists a
factor map $\psi:(W,G)\rightarrow (\widetilde{Z},\widetilde{R})$
such that $(W,G)$ is a zero-dimensional TDS and
$\psi$ is principal extension.

Since $h_{\text{top}}(\widetilde{R})<\infty$ and $\psi$ is principal
extension, we have the following variational principle of condition entropy
\begin{align}\label{u=l-eq1}
\sup_{\widetilde{z}\in \widetilde{Z}}
\htop^{UC}(G,\psi^{-1}(\widetilde{z}))=\sup_{\theta\in M(W,G)}
(h_\theta(G)-h_{ \theta\circ\psi^{-1}}(\widetilde{R}))=0.
\end{align}
The first equality in \eqref{u=l-eq1} follows from \eqref{eq-kkkk-1}.

Let $H=\psi^{-1}(\pi^{-1}_1X)$, $\Gamma=G|_H$ and $\pi=\pi_1\circ \psi
|_H$. Then $(H,\Gamma)$ be a zero-dimensional TDS and
$\pi:(H,\Gamma)\rightarrow (X,T)$ be a factor map. Applying Proposition \ref{pro-fe} to the factor map $\pi:(H,\Gamma)\rightarrow (X,T)$, we obtain
\begin{eqnarray*}
\sup_{x\in X} \htop^{UC}
(\Gamma,\pi^{-1}(x)) &=& \sup_{\mu\in M(H, \Gamma)}(h_\mu(\Gamma)-h_{\mu\circ \pi^{-1}}(T))\\
&\le & \sup_{\mu\in M(W, G)}(h_\mu(\Gamma)-h_{\mu\circ \pi^{-1}}(T))\\
&= & \sup_{\mu\in M(W, G)}(h_\mu(\Gamma)-h_{\mu\circ \psi^{-1}}(T)+ h_{\mu\circ \psi^{-1}}(T)-h_{\mu\circ \pi^{-1}}(T))\\
&\leq & \sup_{\mu\in M(W, G)}(h_\mu(\Gamma)-h_{\mu\circ \psi^{-1}}(T))+ \sup_{\nu\in (\widetilde{Z}, \widetilde{R})}(h_\nu(\widetilde{R})-h_{\nu\circ \pi_1^{-1}}(R))\\
&= & \sup_{\widetilde{z}\in \widetilde{Z}} \htop^{UC}(G,\psi^{-1}(\widetilde{z}))
+\sup_{z\in Z} \htop^{UC}(\widetilde{R},\pi_1^{-1}(z))\\
&=& 0 \qquad \qquad (\mbox{ by \eqref{u=l-eq1}, \eqref{u=l-eq2}}).
\end{eqnarray*}
This shows $\sup_{x\in X} \htop^{UC}(\Gamma,\pi^{-1}(x))=0$.
\end{proof}

\medskip

\begin{proof}[Proof of Theorem \ref{thm-1.1}(ii)] By Lemma \ref{prop-zd-ex-e}, there exists a factor map $\pi:(Y,S)\rightarrow (X,T)$ such
that $(Y,S)$ is zero dimensional and $\sup_{x\in X}
\htop^{UC}(S,\pi^{-1}(x))=0$. By Theorem \ref{thm-fe}, we have that for any
$F\subset Y$,
\begin{equation}\label{eq-lift-eq}
\htop^B(S,F)=\htop^B(T,\pi(F)).
\end{equation}

Let  $Z$ be an analytic subset of $X$. Then $\pi^{-1}(Z)$ is also
an analytic set of $Y$ (cf. Federer \cite[2.2.10]{Fed69}). By \eqref{eq-lift-eq} and Theorem \ref{thm-6.1},
\begin{align*}
\htop^B(T,Z)&=\htop^B(S,\pi^{-1}(Z))=\sup\{\htop^B(S,E):\; E\subseteq
\pi^{-1}(Z),\; E
\mbox{ is compact}\}\\
&=\sup\{\htop^B(T,\pi(E)):\; E\subseteq \pi^{-1}(Z),\; E \mbox{ is
compact}\} \\
&\le \sup\{\htop^B(T,K):\; K\subseteq Z,\; K \mbox{ is compact}\}.
\end{align*}
The reverse inequality is trivial, so $$\htop^B(T,Z)=
\sup\{\htop^B(T,K):\; K\subseteq Z,\; K \mbox{ is compact}\}.$$ This
finishes the proof.
\end{proof}

\begin{rem}\label{rem-1}
{\rm For an invertible TDS $(X, T)$,
Lindenstrauss and Weiss \cite{LiW} introduced the mean dimension
$mdim(X,T)$ (an idea suggested by Gromov). It is well known that for
an invertible TDS $(X, T)$, if $\htop(T)<\infty$ or the
topological dimension of $X$ is finite, then $mdim(X,T)=0$ (see
\cite[Definition 2.6 and Theorem 4.2]{LiW}).

In general, one can show that for an invertible TDS $(X, T)$, if
$mdim(X,T)=0$ then $(X,T)$ has a zero dimensional principal
extension $(Z,R)$ with $R$ being invertible. Indeed, let $(Y,S)$ be
an irrational rotation on the circle. Then $(X\times Y,T\times S)$
admits a nonperiodic minimal factor $(Y,S)$ and $mdim(X\times
Y,T\times S)=0$. Hence $(X\times Y,T\times S)$ has the so called
small boundary property \cite[Theorem 6.2]{Li}, which implies the
existence of a basis of the topology consisting of sets whose
boundaries have measure zero for every invariant measure. With these
results it is easy to construct a refining sequence of
small-boundary partitions for $(X\times Y,T\times S)$, where the
partitions have small boundaries if their boundaries have measure
zero for all $\mu\in \mathcal{M}(X\times Y, T\times S)$. Then by a
standard construction (see p. 152-153 in \cite{BD}), which
associates to this sequence a zero dimensional principal extension
$(Z,R)$ of $(X\times Y,T\times S)$ with $R$ being invertible.
Finally note that $(X\times Y, T\times S)$ is a principal extension
of $(X,T)$, we know that $(Z,R)$ is also a zero dimensional
principal extension of $(X,T)$ since the composition of two
principal extensions is still a principal extension.
}
\end{rem}

\begin{rem}
\label{rem-lind}
{\rm By Remark \ref{rem-1}, we may strengthen Theorem \ref{thm-1.1}(ii) as follows:
Let $(X,T)$ be a TDS with $mdim(X,T)=0$. Then for any analytic set
$Z\subseteq X$,
$$\htop^B(T,Z)=\sup\{\htop^B(T,K):\; K\subseteq Z,\; K \mbox{ is compact}\}.$$
}
\end{rem}

\section{Variational principle for the packing topological entropy}
\label{S-4}
In this section we prove Theorem \ref{thm-4.1}. We first give a lemma.
\begin{lem}
\label{lem-5.1} Let $Z\subset X$ and $s,\epsilon>0$. Assume
$P^s_\epsilon(Z)=\infty$. Then for any given finite interval
$(a,b)\subset \R$ with $a\ge 0$ and any $N\in \N$, there exists a
finite disjoint collection $\{\overline{B}_{n_i}(x_i,\epsilon)\}$
such that $x_i\in Z$, $n_i\geq N$ and $\sum_{i}e^{-n_is}\in (a,b)$.
 \end{lem}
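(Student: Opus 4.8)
The plan is to exploit the fact that $P^s_\epsilon(Z)=\infty$ means $P^s_{N,\epsilon}(Z)=\infty$ for every $N$, so for any $N$ we can find a finite disjoint collection whose weighted sum $\sum_i e^{-n_i s}$ is as large as we like, while the key additional observation is that each individual term $e^{-n_i s}$ with $n_i\geq N$ is small, bounded by $e^{-Ns}$. First I would fix the interval $(a,b)$ and $N$, and enlarge $N$ if necessary so that $e^{-Ns}<b-a$; this guarantees that when we accumulate terms one ball at a time, the running total cannot jump over the gap $(a,b)$ in a single step. Since $P^s_{N,\epsilon}(Z)=\infty > b$, there is a finite disjoint family $\{\overline{B}_{n_i}(x_i,\epsilon)\}_{i=1}^m$ with $x_i\in Z$, $n_i\geq N$ and $\sum_{i=1}^m e^{-n_i s}>b$.

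Next I would list the balls of this family in some order and consider the partial sums $S_k=\sum_{i=1}^k e^{-n_i s}$ for $k=0,1,\ldots,m$, so that $S_0=0$ and $S_m>b>a$. Because each increment $S_k-S_{k-1}=e^{-n_k s}\leq e^{-Ns}<b-a$, the sequence $S_0,S_1,\ldots,S_m$ starts below $a$, ends above $b$, and moves up in steps strictly smaller than the length of $(a,b)$; hence there must be some index $k$ with $S_k\in(a,b)$. (Concretely: let $k$ be the smallest index with $S_k\geq b-(b-a)=a$... more simply, let $k$ be the largest index with $S_k\le a$; then $a<S_{k+1}=S_k+e^{-n_{k+1}s}<a+(b-a)=b$, using $S_k\le a$ and $S_k\ge 0$.) The subcollection $\{\overline{B}_{n_i}(x_i,\epsilon)\}_{i=1}^{k+1}$ is still finite, disjoint, with all $x_i\in Z$ and $n_i\geq N$, and its weighted sum lies in $(a,b)$, as desired.

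I do not expect any serious obstacle here; the only point requiring a little care is the reduction allowing one to assume $e^{-Ns}<b-a$, which is legitimate because enlarging $N$ only shrinks the admissible families, and the hypothesis $P^s_\epsilon(Z)=\infty$ (equivalently $P^s_{N,\epsilon}(Z)=\infty$ for all $N$) is preserved for the larger $N$; also one should note $s>0$ is used precisely to make $e^{-Ns}\to 0$ as $N\to\infty$. The argument is essentially a discrete intermediate value / ``no large jumps'' lemma applied to the partial sums.
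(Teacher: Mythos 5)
Your proof is correct and follows essentially the same route as the paper: enlarge $N$ so that each term $e^{-n_i s}$ is less than $b-a$, use $P^s_{N,\epsilon}(Z)=\infty$ to obtain a finite disjoint family with weighted sum exceeding $b$, and then (by a no-large-jumps argument on the partial sums) extract a subcollection whose sum lies in $(a,b)$. The paper phrases the last step as discarding balls one by one rather than building a prefix, but the two are the same discrete intermediate value argument.
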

\begin{proof}
Take $N_1>N$ large enough such that $e^{-N_1s}<b-a$. Since $P^s_\epsilon(Z)=\infty$, we have $P^s_{N_1,\epsilon}(Z)=\infty$. Thus there is a finite
disjoint collection $\{\overline{B}_{n_i}(x_i,\epsilon)\}$ such that $x_i\in Z$, $n_i\geq N_1$ and $\sum_{i}e^{-n_is}>b$. Since $e^{-n_is}<b-a$, by discarding elements in this collection one by one until  we can have  $\sum_{i}'e^{-n_is}\in (a,b)$.
\end{proof}

\begin{proof}[Proof of Theorem \ref{thm-4.1}] We divide the proof into two parts:

{\it Part 1. $\htop^P(T, Z)\geq \sup \{\overline{h}_\mu(T): \mu \in M(X), \; \mu(Z)=1\}$ for any Borel set $Z\subseteq X$.}

To see this, let  $\mu\in \M(X)$ with $\mu(Z)=1$ for some Borel set $Z\subseteq X$. We need to show that $\htop^P(T, Z)\geq \overline {h}_\mu(T)$. For this purpose we may assume $\overline {h}_\mu(T)>0$;
otherwise we have nothing to prove. Let $0<s<\overline {h}_\mu(T)$. Then there exist $\epsilon,\delta>0$, and a Borel set $A\subset Z$ with $\mu(A)>0$ such that
$$
\overline {h}_\mu(T,x,\epsilon)>s+\delta,\quad \forall\; x\in A,
$$
where
$
\overline {h}_\mu(T,x,\epsilon):=\limsup_{n\to \infty}-\frac{1}{n}\log \mu(B_n(x,\epsilon)).
$

Next we show that ${\mathcal P}^s_{\epsilon/5}(Z)=\infty$, which implies that  $\htop^P(T, Z)\geq \htop^P(T, Z,\epsilon/5)\geq s$. To achieve this, it suffices to show  that $P^s_{\epsilon/5}(E)=\infty$ for any Borel $E\subset A$ with $\mu(E)>0$. Fix such a set $E$. Define
$$
E_n=\{x\in E:\; \mu(B_n(x,\epsilon))<e^{-n(s+\delta)}\},\quad n\in \N.
$$
Since $E\subset A$, we have $\bigcup_{n=N}^\infty E_n=E$ for each $N\in \N$.  Fix $N\in \N$. Then
$\mu(\bigcup_{n=N}^\infty E_n)=\mu(E)$, and hence there exists $n\geq N$ such that
$$
\mu(E_n)\geq \frac{1}{n(n+1)}\mu(E).
$$
Fix such $n$ and consider the family $\{B_n(x,\epsilon/5):\; x\in
E_n\}$. By Lemma \ref{lem-2.1} (in which we use $d_n$ instead of
$d$), there exists a finite pairwise disjoint family $\{B_n(x_i,
\epsilon/5)\}$ with $x_i\in E_n$ such that
$$\bigcup_iB_n(x_i, \epsilon)\supset \bigcup_{x\in E_n}B_n(x, \epsilon/5)\supset E_n.$$
Hence
\begin{eqnarray*}
P^s_{N, \epsilon/5}(E)&\geq & P^s_{N, \epsilon/5}(E_n)\geq \sum_{i}e^{-ns}\geq e^{n\delta}\sum_{i}e^{-n(s+\delta)} \\
&\geq & e^{n\delta} \sum_{i} \mu(B_n(x_i,\epsilon))\geq e^{n\delta}\mu(E_n)\geq \frac{e^{n\delta}}{n(n+1)}\mu(E).
\end{eqnarray*}
Since $\frac{e^{n\delta}}{n(n+1)}\to \infty$ as $n\to \infty$, letting $N\to \infty$ we obtain that $P^s_{\epsilon/5}(E)=\infty$.

{\it Part 2. Let $Z\subseteq X$ be analytic with $\htop^P(T, Z)>0$. For any $0<s<\htop^P(T, Z)$, there exists a compact set $K\subseteq Z$ and
$\mu\in M(K)$ such that $\overline {h}_\mu(T)\geq s$.}

Since $Z$ is  analytic, there exists a continuous surjective map $\phi:\;{\mathcal N}\to Z$.    Let $\Gamma_{n_1,n_2,\ldots,n_p}$ be the set of $(m_1,m_2,\ldots)\in {\mathcal N}$
such that $m_1\leq n_1$, $m_2\leq n_2$, $\ldots$, $m_p\leq n_p$ and let $Z_{n_1,\ldots, n_p}$ be the image of $\Gamma_{n_1,\ldots,n_p}$ under $\phi$.

Take $\epsilon>0$ small enough so  that  $0<s<\htop^P(T, Z,\epsilon)$.
Take $t\in (s, \htop^P(T, Z,\epsilon))$. We are going to construct inductively a sequence of finite sets $(K_i)_{i=1}^\infty$ and a sequence of finite measures $(\mu_i)_{i=1}^\infty$ so that $K_i\subset Z$ and $\mu_i$ is supported on $K_i$ for each $i$. Together with these two sequences, we construct also a sequence
of integers $(n_i)$, a sequence of positive numbers $(\gamma_i)$ and a sequence of integer-valued functions $(m_i:\; K_i\to \N)$.   The method of our construction is inspired somehow by  the work of Joyce and Preiss \cite{JoPr95} on packing measures.

The construction is divided into several small steps:

{\sl Step 1. Construct $K_1$ and $\mu_1$, as well as $m_1(\cdot)$, $n_1$ and $\gamma_1$.}

 Note that ${\mathcal P}^t_\epsilon(Z)=\infty$.
Let $$
H=\bigcup\{G\subset X:\; \mbox{ $G$ is open}, \;  {\mathcal P}^t_\epsilon(Z\cap G)=0\}.
$$
Then ${\mathcal P}^t_\epsilon(Z\cap H)=0$ by the separability of $X$. Let $Z'=Z\backslash H=Z\cap (X\backslash H)$.
For any open set $G\subset X$, either $Z'\cap G=\emptyset$, or ${\mathcal P}^t_\epsilon(Z'\cap G)>0$. To see this, assume ${\mathcal P}^t_\epsilon(Z'\cap G)=0$ for an open set $G$; then
${\mathcal P}^t_\epsilon(Z\cap G)\leq {\mathcal P}^t_\epsilon(G\cap Z')+ {\mathcal P}^t_\epsilon(Z\cap H)=0$, implying $G\subset H$ and hence $Z'\cap G=\emptyset$.

  Note that  ${\mathcal P}^t_\epsilon(Z')={\mathcal P}^t_\epsilon(Z)=\infty$ (because  ${\mathcal P}^t_\epsilon(Z)\leq {\mathcal P}^t_\epsilon(Z')+
  {\mathcal P}^t_\epsilon(Z\cap H)={\mathcal P}^t_\epsilon(Z')$). It follows ${\mathcal P}^s_\epsilon(Z')=\infty$. By Lemma \ref{lem-5.1}, we can find a finite set $K_1\subset  Z'$, an integer-valued function $m_1(x)$ on $K_1$ such that
the collection $\{\overline{B}_{m_1(x)}(x,\epsilon)\}_{x\in K_1}$ is disjoint and
$$\sum_{x\in K_1} e^{-m_1(x)s}\in (1,2).$$
Define $\mu_1=\sum_{x\in K_1} e^{-m_1(x)s}\delta_x$, where $\delta_x$ denotes the Dirac measure at $x$.
 Take a small $\gamma_1>0$ such that for any function $z: \; K_1\to X$ with $d(x,z(x))\leq \gamma_1$, we have for each $x\in K_1$,
\begin{equation}
\label{e-tkey}
\Big(\overline{B}(z(x),\gamma_1)\cup\overline{B}_{m_1(x)}(z(x),
\epsilon)\Big)\cap \Big(\bigcup_{y\in K_1\backslash
\{x\}}\overline{B}(z(y),\gamma_1)\cup \overline{B}_{m_1(y)}(z(y),
\epsilon)\Big)=\emptyset.
\end{equation}
Here and afterwards, $\overline{B}(x,\epsilon)$ denotes the closed ball $\{y\in X:\; d(x,y)\leq \epsilon)\}$.
Since $K_1\subset Z'$, ${\mathcal P}^t_\epsilon(Z\cap B(x,\gamma_1/4))\geq {\mathcal P}^t_\epsilon(Z'\cap B(x,\gamma_1/4))>0$ for each $x\in K_1$.
Therefore  we can pick a large $n_1\in \N$ so that  $Z_{n_1}\supset K_1$ and ${\mathcal P}^t_\epsilon(Z_{n_1}\cap B(x,\gamma_1/4))>0$ for each $x\in K_1$.

{\sl Step 2. Construct $K_2$ and $\mu_2$, as well as $m_2(\cdot)$, $n_2$ and $\gamma_2$.}

By \eqref{e-tkey}, the family of  balls $\{\overline{B}(x,\gamma_1)\}_{x\in
K_1}$, are pairwise disjoint. For each $x\in K_1$, since ${\mathcal
P}^t_{\epsilon}(Z_{n_1}\cap B(x,\gamma_1/4))>0$,  we can construct
as Step 1,  a finite set $$E_2(x)\subset Z_{n_1}\cap
B(x,\gamma_1/4)$$ and an integer-valued function
$$m_2:\; E_2(x)\to \N\cap [\max \{m_1(y):\; y\in K_1\},\infty)$$    such that \begin{itemize}
\item[(2-a)] ${\mathcal P}^t_\epsilon(Z_{n_1}\cap G)>0$ for each open set $G$ with $G\cap E_2(x)\neq \emptyset$;
\item[(2-b)] The elements in $\{\overline{B}_{m_2(y)}(y,\epsilon)\}_{y\in E_2(x)}$ are disjoint,  and
$$
\mu_1(\{x\})< \sum_{y\in E_2(x)}e^{-m_2(y)s}<(1+2^{-2})\mu_1(\{x\}).
$$
\end{itemize}
To see it, we fix $x\in K_1$. Denote $F=Z_{n_1}\cap B(x,\gamma_1/4)$. Let $$H_x:=\bigcup\{G\subset X:\; G  \mbox{ is open }\; {\mathcal
P}^t_{\epsilon}(F\cap G)=0\}.$$  Set $F'=F\backslash H_x$.  Then as in Step 1, we can show that ${\mathcal
P}^t_{\epsilon}(F')={\mathcal P}^t_{\epsilon}(F)>0$ and furthermore, ${\mathcal
P}^t_{\epsilon}(F'\cap G)>0$ for any open set $G$ with $G\cap F'\neq \emptyset$. Note that ${\mathcal
P}^s_{\epsilon}(F')=\infty$ (since $s<t$), by Lemma  \ref{lem-5.1}, we can find a finite set $E_2(x)\subset F'$ and a map $m_2:\; E_2(x)\to
\N\cap [\max \{m_1(y):\; y\in K_1\},\infty)$ so that  (2-b) holds. Observe that if a open set $G$ satisfies  $G\cap E_2(x)\neq \emptyset$, then
 $G\cap F'\neq \emptyset$, and hence  ${\mathcal
P}^t_{\epsilon}(Z_{n_1}\cap G)\geq {\mathcal
P}^t_{\epsilon}(F'\cap G)>0$. Thus (2-a) holds.

Since the family $\{\overline{B}(x,\gamma_1)\}_{x\in
K_1}$ is disjoint, $E_2(x)\cap E_2(x')=\emptyset$ for different $x,x'\in K_1$. Define $K_2=\bigcup_{x\in K_1}E_2(x)$ and
$$
\mu_2=\sum_{y\in K_2}e^{-m_2(y)s}\delta_y.
$$
By \eqref{e-tkey} and (2-b), the elements in  $\{\overline{B}_{m_2(y)}(y,\epsilon)\}_{y\in
K_2}$ are pairwise disjoint. Hence we can take
$0<\gamma_2<\gamma_1/4$ such that for any function $z: \; K_2\to X$
with $d(x,z(x))<\gamma_2$ for $x\in K_2$, we have
\begin{equation}
\label{e-tkey1}
\Big(\overline{B}(z(x),\gamma_2)\cup\overline{B}_{m_2(x)}(z(x),
\epsilon)\Big)\cap \Big(\bigcup_{y\in K_2\backslash
\{x\}}\overline{B}(z(y),\gamma_2)\cup \overline{B}_{m_2(y)}(z(y),
\epsilon)\Big)=\emptyset
\end{equation}
for each $x\in K_2$.
Choose a large $n_2\in \N$ such that
$Z_{n_1,n_2}\supset K_2$ and ${\mathcal P}^t_\epsilon(Z_{n_1, n_2}\cap B(x,\gamma_2/4))>0$ for each $x\in K_2$.

{\sl Step 3. Assume that $K_i$, $\mu_i$, $m_i(\cdot)$, $n_i$ and $\gamma_i$ have been constructed for $i=1,\ldots, p$.
 In particular, assume that
  for any function $z: \; K_p\to X$ with $d(x,z(x))<\gamma_p$ for $x\in K_p$, we have
\begin{equation}
\label{e-tkeyp}
\Big(\overline{B}(z(x),\gamma_p)\cup\overline{B}_{m_p(x)}(z(x),
\epsilon)\Big)\cap \Big(\bigcup_{y\in K_p\backslash
\{x\}}\overline{B}(z(y),\gamma_p)\cup \overline{B}_{m_p(y)}(z(y),
\epsilon)\Big)=\emptyset
\end{equation}
for each $x\in K_p$; and $Z_{n_1,\ldots, n_p}\supset K_p$ and ${\mathcal P}^t_\epsilon(Z_{n_1,\ldots,  n_p}\cap B(x,\gamma_p/4))>0$ for each $x\in K_p$.
 We construct below each term of them for $i=p+1$ in a way similar to Step 2. }

Note that the elements in $\{\overline{B}(x,\gamma_p)\}_{x\in
K_p}$ are pairwise disjoint. For each $x\in K_p$, since ${\mathcal
P}^t_{\epsilon}(Z_{n_1,\ldots, n_p}\cap B(x,\gamma_p/4))>0$,  we can
construct as Step 2,  a finite set $$E_{p+1}(x)\subset
Z_{n_1,\ldots, n_p}\cap B(x,\gamma_p/4)$$ and an integer-valued
function
$$m_{p+1}:\; E_{p+1}(x)\to \N\cap [\max \{m_p(y):\; y\in K_p\},\infty)$$    such that
\begin{itemize}
\item[(3-a)]
 ${\mathcal P}^t_\epsilon(Z_{n_1,\ldots, n_p}\cap G)>0$ for each open set $G$ with $G\cap E_{p+1}(x)\neq \emptyset$; and\\
\item[(3-b)]
 $\{\overline{B}_{m_{p+1}(y)}(y,\epsilon)\}_{y\in E_{p+1}(x)}$ are disjoint and satisfy
$$
\mu_p(\{x\})< \sum_{y\in E_{p+1}(x)}e^{-m_{p+1}(y)s}<(1+2^{-p-1})\mu_p(\{x\}).
$$
\end{itemize}
Clearly $E_{p+1}(x)\cap E_{p+1}(x')=\emptyset$ for different $x,x'\in K_p$. Define $K_{p+1}=\bigcup_{x\in K_p}E_{p+1}(x)$ and
$$
\mu_{p+1}=\sum_{y\in K_{p+1}}e^{-m_{p+1}(y)s}\delta_y.
$$
By \eqref{e-tkeyp} and (3-b), $\{\overline{B}_{m_{p+1}(y)}(y,\epsilon)\}_{y\in K_{p+1}}$ are disjoint. Hence we can take $0<\gamma_{p+1}<\gamma_p/4$ such that
 for any function $z: \; K_{p+1}\to X$ with $d(x,z(x))<\gamma_{p+1}$, we have for each $x\in K_{p+1}$,
\begin{equation}
\label{e-tkey2}
\Big(\overline{B}(z(x),\gamma_{p+1})\cup\overline{B}_{m_{p+1}(x)}(z(x),
\epsilon)\Big)\cap \Big(\bigcup_{y\in K_{p+1}\backslash
\{x\}}\overline{B}(z(y),\gamma_{p+1})\cup
\overline{B}_{m_{p+1}(y)}(z(y), \epsilon)\Big)=\emptyset.
\end{equation}
Choose a large $n_{p+1}\in \N$ such that
$Z_{n_1,\ldots, n_{p+1}}\supset K_{p+1}$ and $${\mathcal P}^t_\epsilon(Z_{n_1,\ldots,  n_{p+1}}\cap B(x,\gamma_{p+1}/4))>0$$ for each $x\in K_{p+1}$.

As in the above steps, we can  construct by induction the sequences   $(K_i)$, $(\mu_i)$, $(m_i(\cdot))$, $(n_i)$ and $(\gamma_i)$.
We summarize some of their basic properties as follows:
\begin{itemize}
\item[(a)] For each $i$, the family ${\mathcal F}_i:=\{\overline{B}(x,\gamma_i):\; x\in K_i\}$ is disjoint.  Each element in ${\mathcal F}_{i+1}$ is a subset of $\overline {B}(x, \gamma_i/2)$ for some $x\in K_i$.
\item[(b)] For each $x\in K_i$ and $z\in \overline{B}(x,\gamma_i)$,
$$
\overline{B}_{m_i(x)}(z,\epsilon)\cap \bigcup_{y\in K_i\backslash \{x\}}\overline{B}(y,\gamma_i)=\emptyset \mbox{ and }
$$
\begin{equation*}
\label{e-ite}
\begin{split}
\mu_i(\overline{B}(x,\gamma_i))=e^{-m_i(x)s}\leq \sum_{y\in E_{i+1}(x)}e^{-m_{i+1}(y)s}\leq
(1+2^{-i-1})\mu_i(\overline{B}(x,\gamma_i)),
\end{split}
\end{equation*}
where $E_{i+1}(x)=B(x,\gamma_i)\cap K_{i+1}$.
\end{itemize}
The second part in (b) implies,
$$
\mu_{i}(F_i)\leq \mu_{i+1}(F_i)= \sum_{F\in {\mathcal F}_{i+1}:\; F\subset F_i}\mu_{i+1}(F)\leq (1+2^{-i-1})\mu_i(F_i),\qquad F_i\in {\mathcal F}_i
$$
Using the above inequalities repeatedly, we have for any $j>i$,
  \begin{equation}
 \label{e-ite1}\mu_i(F_i)\leq \mu_j(F_i)\leq \prod_{n=i+1}^j (1+2^{-n}) \mu_i(F_i)\leq C\mu_i(F_i),\quad \forall F_i\in {\mathcal F}_i,
 \end{equation}
 where $C:=\prod_{n=1}^\infty(1+2^{-n})<\infty$.

 Let $\tilde{\mu}$ be a limit point of $(\mu_i)$ in the weak-star topology. Let
 $$K=\bigcap_{n=1}^\infty \overline{\bigcup_{i\geq n} K_i}.$$
 Then
 $\mu$ is supported on $K$. Furthermore
 $$
 K=\bigcap_{n=1}^\infty \overline{\bigcup_{i\geq n} K_i}\subset \bigcap_{p=1}^\infty \overline{ Z_{n_1,\ldots, n_p}}.
 $$
 However by the continuity of $\phi$, we can show that   $\bigcap_{p=1}^\infty  Z_{n_1,\ldots, n_p}=\bigcap_{p=1}^\infty \overline{ Z_{n_1,\ldots, n_p}}$ by applying Cantor's diagonal argument. Hence $K$ is a compact subset of $Z$.

  On the other hand, by \eqref{e-ite1}, $$e^{-m_i(x)s}=\mu_i(\overline{B}(x,\gamma_i))\leq \tilde{\mu} ({B(x,\gamma_i)})\leq C \mu_i(\overline{B}(x,\gamma_i))=Ce^{-m_i(x)s},\quad
 \forall x\in K_i.$$
 In particular, $1\leq  \sum_{x\in K_1}\mu_1({B(x,\gamma_1)})\leq \tilde{\mu}(K)\leq \sum_{x\in K_1}C\mu_1({B(x,\gamma_1)})\leq 2C$.
   Note that $K\subset \bigcup_{x\in K_i}\overline{B}(x,\gamma_i/2)$. By the first part of (b),  for each $x\in K_i$ and $z\in \overline{B}(x,\gamma_i)$,
 $$\tilde{\mu}(\overline{B}_{m_i(x)}(z,\epsilon))\leq \tilde{\mu}(\overline{B}(x,\gamma_i/2))\leq Ce^{-m_i(x)s}.$$
 For each $z\in K$ and $i\in N$,  $z\in \overline{B}(x,\gamma_i/2)$ for some $x\in K_i$. Hence
 $$\tilde{\mu}({B_{m_i(x)}(z,\epsilon)})\leq  Ce^{-m_i(x)s}.$$

Define $\mu=\tilde{\mu}/\tilde{\mu}(K)$. Then $\mu\in M(K)$, and  for each $z\in K$, there exists a sequence $k_i\uparrow \infty$ such that
   $\mu({B_{k_i}(z,\epsilon)})\leq  Ce^{-k_is}/\tilde{\mu}(K)$.
 It follows that $\overline{h}_\mu(T)\geq s$.
\end{proof}

\section{Main notation and conventions}
\label{B}
For the reader's convenience, we summarize in Table \ref{table-1} the main notation and typographical conventions used in this paper.
\begin{table}
\centering
\caption{Main notation and conventions}
\vspace{0.05 in}
\begin{footnotesize}
\begin{raggedright}
\begin{tabular}{p{1.8 in} p{4 in} }
\hline \rule{0pt}{3ex}
$(X,T)$ & A topological dynamical system (Sect. \ref{S-1})\\
$M(X)$ & Set of all  Borel probability measures on $X$ \\
$M(X,T),\; E(X,T)$ &  Set of $T$-invariant (resp. ergodic)  Borel probability measures on $X$\\
$d_n$ & $n$-th Bowen's metric (cf. \eqref{e-2.1})  \\
$B(x, \epsilon), \;\overline{B}(x,\epsilon)$ & Open (resp. closed)  ball in $(X, d)$ centered at $x$ of radius $\epsilon$\\
$B_n(X,\epsilon),\; \overline{B}_n(x,\epsilon)$ & Open (resp. closed)  ball in $(X, d_n)$ centered at $x$ of radius $\epsilon$\\
$\overline{h}_\mu(T)$, \;$\underline{h}_\mu(T)$ & Measure-theoretic upper (resp.  lower)  entropy of $T$ with respect to $\mu\in M(X)$ (Sect. \ref{S-1})\\
$\htop^{UC}(T, Z)$ & Upper capacity topological entropy of $Z$ (Sect. \ref{S-2})\\
$\htop^{B}(T, Z)$ & Bowen's  topological entropy of  of $Z$ (Sect. \ref{S-2})\\
$\htop^{P}(T, Z)$ & Packing topological entropy of  $Z$ (Sect. \ref{S-2})\\
$\htop(T)$ & Topological entropy of  $T$ (Sect. \ref{S-2})\\
${\M}_{N,\epsilon}^s(Z),\; {\M }_{\epsilon}^s(Z),\; {\M }^s(Z)$  &  (Sect.~\ref{S-2})\\
${\W }_{N,\epsilon}^s(Z), \;{\W }_\epsilon^s(Z), \;{\W }^s(Z)$ &  (Sect.~\ref{S-2})\\\
${ P}_{N,\epsilon}^s(Z)$,\; ${P}_\epsilon^s(Z)$,\; ${\mathcal P}_\epsilon^s(Z)$ &  (Sect.~\ref{S-2})\\
${\M}_{N}^s({\mathcal U}, Z)$,\; ${\M}^s({\mathcal U}, Z)$ &  (Sect.~\ref{S-2})\\
$\htop^B(T, {\mathcal U}, Z)$ &  (Sect.~\ref{S-2})\\
${\mathcal N}$ & the set of infinite sequences of
natural numbers endowed with  product topology.\\

\hline
\end{tabular}
\label{table-1}
\end{raggedright}
\end{footnotesize}
\end{table}

\noindent {\bf Acknowledgements}  The first author was partially
supported by the RGC grant and the Focused Investments Scheme B  in
CUHK.  The second author was partially supported by NSFC
(10911120388, 11071231), Fok Ying Tung Education Foundation,
FANEDD (Grant 200520), and the Fundamental Research Funds for the
Central Universities (WK0010000001,WK0010000014). The authors thank
Hanfeng Li for helpful comments.


\begin{thebibliography}{50}
\bibitem{AKM65} R. L. Adler, A. G. Konheim and M. H. McAndrew,
Topological entropy, {\it Trans. Amer. Math. Soc.}, {\bf 114} (1965),
309--319.

\bibitem{BaSc00} L. Barreira and J. Schmeling, Sets of ``non-typical'' points have full topological entropy and
full Hausdorff dimension, {\it Israel J. Math.} {\bf 116} (2000), 29–-70.

\bibitem{BK} M. Brin and A. Katok, On local entropy.
Geometric dynamics (Rio de Janeiro, 1981), 30--38, {\it Lecture Notes in
Math.}, {\bf 1007}, Springer, Berlin, 1983.


\bibitem{Bow71}
R. Bowen, Entropy for group endomorphisms and homogeneous spaces,
{\it Trans. Amer. Math. Soc.}, {\bf 153} (1971), 401--414.

\bibitem{Bow73} R. Bowen, Topological entropy for noncompact sets, {\it Trans. Amer. Math. Soc.}, {\bf 184} (1973), 125--136.




\bibitem{BD} M. Boyle and T. Downarowicz, The entropy theory of
symbolic extensions, {\it Invent. Math.}, {\bf 156} (2004), 119--161.


\bibitem{Car67}L.  Carleson, {\it Selected problems on exceptional sets}.  Van Nostrand Co., Inc., Princeton, 1967.

\bibitem{DoSe02} T. Downarowicz and J. Serafin, Fiber entropy and
conditional variational principles in compact non-metrizable spaces,
{\it Fund. Math.}, {\bf 172} (2002), 217--247.

\bibitem{Fal85} K. J.  Falconer, {\it The geometry of fractal sets}.  Cambridge University Press, Cambridge, 1986.
\bibitem{Fal03} K. J. Falconer, {\it  Fractal geometry. Mathematical foundations and applications}. Second edition. John Wiley \& Sons, Inc., Hoboken, NJ, 2003.

\bibitem{Fed69} H. Federer, {\it  Geometric measure theory}.  Springer-Verlag, New York Inc.,  1969.

\bibitem{Gm} T. N. T. Goodman, Relating topological entropy and measure
entropy, {\it Bull. London Math. Soc.}, {\bf  3} (1971), 176--180.


\bibitem{How95} J. D.  Howroyd,  On dimension and on the existence of sets of finite positive Hausdorff measure.
{\it Proc. London Math. Soc.} (3) {\bf 70} (1995),  581--604.


\bibitem{HYZ-1} W. Huang, X. D. Ye and G. H. Zhang,
Lowering topological entropy over subsets, {\it Ergodic Theory Dynam. Systems }, {\bf  30} (2010),  181--209.

\bibitem{HYZ1} W. Huang, X. D. Ye and P. F. Zhang,
A new topological entropy for non-compact set and its application, draft.

\bibitem{JoPr95} H. Joyce and D.  Preiss,
On the existence of subsets of finite positive packing measure.
{\it Mathematika} {\bf 42} (1995),  15--24.

\bibitem{K} A. N. Kolmogorov, A new metric invariant of transient
dynamical systems and automorphisms of Lebesgue spaces, {\it Dokl.
Akad. Sci. SSSR.}, {\bf 119} (1958), 861--864.




\bibitem{L} F. Ledrappier, A variational principle for the
topological conditional entropy, {\it Lecture Notes in Math.}, {\bf
729} (1979), 78--88.

\bibitem{LeWa77} F. Ledrappier and P. Walters, A relativised
variational principle for continuous transformations, {\it J. London
Math. Soc.}, {\bf 16} (1977), 568--576.


\bibitem{Li} E. Lindenstrauss, Mean dimension, small entropy factors and
an embedding theorem, {\it Publ. Math. I.H.E.S.}, {\bf 89} (1999), 227--262.

\bibitem{LiW}  E. Lindenstrauss and B. Weiss, Mean topological dimension,
{\it Israel J. Math.}, {\bf 115} (2000), 1--24.

\bibitem{MaWe08}J. H. Ma and Z. Y. Wen,  A Billingsley type theorem for Bowen entropy. {\it C. R. Math. Acad. Sci. Paris} {\bf 346} (2008), no. 9-10, 503–-507.

\bibitem{Mat95}P. Mattila,   {\it Geometry of sets and measures in Euclidean
spaces}. Cambridge University Press, 1995.

\bibitem{Shu06}L. Shu, {\it Poincare recurrence, measure theoretic and topological entropy}. Ph. D Thesis, The Chinese University of Hong Kong, 2006.



\bibitem{Pes97} Ya. B. Pesin, {\em Dimension Theory in Dynamical Systems.
Contemporary Views and Applications},  University of Chicago Press, Chicago, IL, 1997.
%

\bibitem{Wal82} P. Walters, {\it An introduction to ergodic theory}, Springer-Verlag, New York-Berlin, 1982.




\end{thebibliography}
\end{document}